\def\@makechapterhead#1{%
  \vspace*{50\p@}%
  {\parindent \z@ \raggedright \normalfont
    \interlinepenalty\@M
    \Huge\bfseries  \thechapter.\quad #1\par\nobreak
    \vskip 40\p@
  }}
\newtheorem{theorem}{Theorem}[section]
\newtheorem{lemma}[theorem]{Lemma}
\newtheorem{notation}[theorem]{Notation}
\newtheorem{corollary}[theorem]{Corollary}
\theoremstyle{definition}
\newtheorem{definition}[theorem]{Definition}
\newtheorem{problem}[theorem]{Problem}
\newtheorem{question}[theorem]{Question}
\newtheorem *{Theorem A}{Theorem A}
\newtheorem *{Corollary B}{Corollary B}
\newtheorem *{Theorem C}{Theorem C}
\newtheorem *{Theorem D}{Theorem D}
\newtheorem *{Remark E}{Remark E}
\newtheorem *{Theorem F}{Theorem F}
\newtheorem *{Theorem G}{Theorem G}
\newtheorem *{Theorem H}{Theorem H}
\newcommand{\Gr}{\text{Grad}}
\newcommand{\End}{{\text{End}}}
\newcommand{\Irr}{{\text{Irr}}}
\newcommand{\C}{{\mathbb C}}
\newcommand{\N}{{\mathbb N}}
\newcommand{\RNum}[1]{\uppercase\expandafter{\romannumeral #1\relax}}
\newcommand{\rNum}[1]{\lowercase\expandafter{\romannumeral #1\relax}}
\numberwithin{equation}{section}
 \newcommand{\bigslant}[2]{{\raisebox{.1em}{$#1$}\left/\raisebox{-.1em}{$#2$}\right.}}
\begin{document}
\title[Quotient gradings and the intrinsic fundamental group]{Quotient gradings and the intrinsic fundamental group}
\author{Yuval Ginosar}
\address{Department of Mathematics, University of Haifa, Haifa 3498838, Israel}
\email{ginosar@math.haifa.ac.il}

\author{Ofir Schnabel}
\address{Department of Mathematics, ORT Braude College, 2161002 Karmiel, Israel}
\email{ofirsch@braude.ac.il}

\begin{abstract}

Quotient grading classes are essential participants in the computation of the intrinsic fundamental group $\pi_1(\mathcal{A})$ of an algebra $\mathcal{A}$.
In order to study quotient gradings of a finite-dimensional semisimple complex algebra $\mathcal{A}$
it is sufficient to understand the quotient gradings of twisted group algebra gradings. We establish the graded structure of such quotients using Mackey's obstruction class.
Then, for matrix algebras $\mathcal{A}=M_n(\C)$ we tie up the concepts of \textit{braces,} group-theoretic \textit{Lagrangians} and \textit{elementary crossed products.}
We also manage to compute the intrinsic fundamental group of the diagonal algebras $\mathcal{A}=\C ^4$ and $\mathcal{A}=\C ^5$.
\end{abstract}

\maketitle

\tableofcontents
\bibliographystyle{abbrv}
\section{Introduction}\label{intro}\pagenumbering{arabic} \setcounter{page}{1}
Since it was firstly suggested by
 C. Cibils, M.J. Redondo and A. Solotar in \cite{cibils2010,cibilsintrinsic}, the intrinsic fundamental group
$\pi_1(\mathcal{C})$ was investigated for certain linear categories $\mathcal{C}$
\cite{cibils2010,cibils2012universal,Cibils2016,GS16}. The
intrinsic fundamental group of an associative algebra (as a one-object
category) $\mathcal{A}$ is essentially the inverse limit of a
diagram whose objects are groups which grade $\mathcal{A}$ in a
connected way, and whose morphisms are group epimorphisms which
correspond to quotient morphisms between these gradings. There are
three steps one should in principle do in order to compute
$\pi_1(\mathcal{A})$. First, classify all the connected gradings
of $\mathcal{A}$ up to equivalence of gradings (see below). Then,
compute the quotient grading morphisms between the above grading
classes. These two steps yield a diagram $\Delta (\mathcal{A})$ of
groups and homomorphisms (see \cite[\S 2.4]{GS16} and Definition
\ref{scdiagdef} herein). The last step is to calculate the inverse
limit of this diagram. This is often the hardest of the above
three challenges.

Our concern in this paper are the fundamental groups of finite-dimensional semisimple associative algebras $\mathcal{A}$.
In this regard, to-date $\pi_1(\mathcal{A})$ is fully in hand only
for the diagonal algebras $\mathcal{A}=\oplus_1^j \C,$ where
$j\leq 4$ (\cite{cibils2010}, see also \S\ref{proofofF}), and for
the matrix algebras $\mathcal{A}=M_p(\mathbb{C}),$ where $p$ is
any prime (\cite{cibils2010}, see also Remark E herein).

Let us recall some basic concepts and results about graded (semisimple) algebras. A more thorough background is outlined in \S\ref{review}. Throughout,
all algebras and vector spaces are complex and finite-dimensional, even though these
conditions are sometimes unnecessary.

A \textit{grading} of a linear space $\mathcal{A}$ by a
group $\Gamma$ is just a vector space decomposition
\begin{equation}\label{eq:algebragrading}
\mathcal{G}_{\mathcal{A}}:\mathcal{A}=\bigoplus _{\gamma\in \Gamma}\mathcal{A}_{\gamma}.
\end{equation}
The subspace $\mathcal{A}_{\gamma}\subseteq\mathcal{A}$ is called the \textit{homogeneous $\gamma$-component} of \eqref{eq:algebragrading}.
A $\Gamma$-grading \eqref{eq:algebragrading} is \textit{connected} if
the set
\begin{equation}\label{supp}
\text{supp}_{\Gamma}(\mathcal{A}):=\left\{\gamma\in \Gamma|\ \text{dim}_{\C}(\mathcal{A}_{\gamma})\geq 1\right\}
\end{equation}
generates $\Gamma$.

A $\Gamma$-grading of an algebra $\mathcal{A}$ is a decomposition \eqref{eq:algebragrading} satisfying the extra demand
$\mathcal{A}_{\gamma_1}\cdot\mathcal{A}_{\gamma_2}\subseteq \mathcal{A}_{\gamma_1\cdot\gamma_2}$
for every $\gamma_1,\gamma_2\in \Gamma$.
Due to this inclusion, the homogeneous $e$-component $\mathcal{A}_e$ of \eqref{eq:algebragrading}, where $e$ is the identity element of $\Gamma$, is an $\mathcal{A}$-subalgebra, called the
\textit{base algebra} of $\mathcal{A}$.

A \textit{graded homomorphism} between two graded algebras
\begin{equation}\label{eq:equivgr}
\mathcal{G}_{\mathcal{A}}:\mathcal{A}=\bigoplus _{\gamma\in \Gamma _1}
\mathcal{A}_{\gamma},\quad
\mathcal{G}_{\mathcal{B}}:\mathcal{B}=\bigoplus _{\gamma'\in \Gamma _2}
\mathcal{B}_{\gamma'},
\end{equation}
is a pair $(\psi,\phi)$, where $\psi:\mathcal{A}\rightarrow \mathcal{B}$ is an algebra homomorphism and $\phi:\Gamma _1\rightarrow \Gamma _2$
is a group homomorphism such that
$$\psi(\mathcal{A}_{\gamma})\subseteq \mathcal{B}_{\phi(\gamma)},\ \ \forall \gamma\in\Gamma.$$
If $\Gamma_1=\Gamma_2(=\Gamma)$ and $\phi:\Gamma\rightarrow \Gamma $ is the identity we say that $\psi$ is a $\Gamma$-graded homomorphism and write
$$\psi:\mathcal{A}\stackrel{\Gamma}{\to}\mathcal{B}.$$
A graded homomorphism  $(\psi,\phi)$ is a \textit{graded quotient morphism} if $\psi$ is an algebra isomorphism and the group homomorphism $\phi$ is surjective.
A quotient-morphism $(\psi,\phi)$ between two graded algebras
\eqref{eq:equivgr} is a {\it graded-equivalence} if $\phi$ is a
group isomorphism. We denote the graded equivalence class
of~\eqref{eq:algebragrading} by $[\mathcal{G}_{\mathcal{A}}]$.
With the above notation, a graded equivalence $(\psi,\phi)$ is a {\it graded isomorphism}
if $\Gamma_1=\Gamma_2$ and $\phi:\Gamma_1\rightarrow \Gamma _1$ is the identity. We denote this finer equivalence relation of $\Gamma$-graded isomorphism by $\stackrel{\Gamma}{\cong}$.

Quotients of graded algebras can be given just in terms of a quotient of
the grading group as follows. Let \eqref{eq:algebragrading} be a $\Gamma$-graded
algebra and let $N\lhd \Gamma$ be a normal subgroup. Then the quotient ${\Gamma}/{N}$-grading of \eqref{eq:algebragrading} by $(\psi,\phi)=(\text{Id}_\mathcal{A},\text{mod}N)$,
is denoted by $\bigslant{\mathcal{G}_{\mathcal{A}}}{N}$,
that is
$$\bigslant{\mathcal{G}_{\mathcal{A}}}{N}:\mathcal{A}=\bigoplus _{\bar{\gamma}\in \Gamma/N} \mathcal{A}_{\bar{\gamma}},$$
where $\mathcal{A}_{\bar{\gamma}}:=\bigoplus_{\gamma\in\bar{\gamma}}\mathcal{A}_{\gamma}.$

Quotient morphisms determine a well-defined partial order on the
set Grad$(\mathcal{A})$ of connected graded-equivalence classes of
a given finite-dimensional algebra $\mathcal{A}$ \cite[Proposition
2.8]{GS16}. If there exists a quotient morphism
$(\psi,\phi):\mathcal{G}_1\to\mathcal{G}_2$ between two gradings
of $\mathcal{A}$ then we say that $[\mathcal{G}_2]$ is a
\textit{graded quotient} of $[\mathcal{G}_1],$ and write
$$[\mathcal{G}_2]\leq [\mathcal{G}_1].$$
Note that the graded class of the algebra $\mathcal{A}$, considered as graded by the trivial group $\{e\}$, is the minimum element in this partial order.
For a connected grading class $[\mathcal{G}]\in$Grad$(\mathcal{A})$, let
\begin{equation}\label{eq:conus}
[\mathcal{G}]_{\geq}:=\{\mathcal{H}\in
\text{Grad}(\mathcal{A})|\ [\mathcal{H}]\leq [\mathcal{G}]\}
\end{equation}
be the cone corresponding to $[\mathcal{G}]$.
With the notation \eqref{eq:conus}, a grading class of $\mathcal{A}$ is a \textit{common quotient} of the grading classes
$[\mathcal{G}_1],[\mathcal{G}_2]\in\text{Grad}(\mathcal{A})$ if it belongs to the intersection
\begin{equation}\label{intsect}
[\mathcal{G}_1]_{\geq}\bigcap [\mathcal{G}_2]_{\geq}\subseteq\text{Grad}(\mathcal{A}).
\end{equation}
Then, for the computation of $\pi_1(\mathcal{A})$ in the case where $\mathcal{A}$ is a finite-dimensional algebra, it is
enough to classify all the maximal connected grading classes of
$\mathcal{A}$ under the quotient partial order, as well as their maximal common quotients,
and then to compute the pull-back
of the corresponding sub-diagram of $\Delta (\mathcal{A})$. Our objective is to advance in the study of the above tasks in the semisimple case.
For this purpose, Problems \ref{prob1} and \ref{q:maxelemunique} and Question \ref{questiona} are hereby posed.

Let us recall two important ingredients in the theory of graded algebras.\\
\textbf{Induced gradings.} (see \cite[\S 2.5]{GS16} and \S\ref{inductionsec} in the sequel). A $\Gamma$-grading
\begin{equation}\label{eq:vsgrading}
V=\bigoplus _{\gamma\in \Gamma}V_{\gamma}.
\end{equation}
of an $r$-dimensional vector space $V$ yields a natural $\Gamma$-grading on the algebra of $r$-by-$r$ complex matrices identified with its endomorphism algebra as follows:
\begin{equation}\label{eq:elemgrading}
\text{M}_r(\C)\cong\End_{\C}(V)=\bigoplus _{\gamma\in \Gamma}\End_{\C}(V)_{\gamma},
\end{equation}
where
\begin{equation}\label{endintro}
\End_{\C}(V)_{\gamma}:=\left\{\varphi\in\End_{\C}(V)|\ \ \varphi(V_{\gamma'})\subseteq V_{\gamma\cdot\gamma'},\ \ \forall \gamma'\in \Gamma\right\}.
\end{equation}
The grading \eqref{eq:elemgrading} is termed an \textit{elementary grading} \cite[P. 711]{das1999} of the matrix algebra (associated with the vector space grading \eqref{eq:vsgrading}).
The grading \eqref{eq:vsgrading}, and in turn its associated elementary grading \eqref{eq:elemgrading} give rise to a character
\begin{equation}\label{char}
x:=\sum_{\gamma\in\Gamma}\dim_{\C}(V_{\gamma})\cdot\gamma\in\N [\Gamma]
\end{equation}
of augmentation $\epsilon(x)=r$, where $\N[\Gamma]$ denotes the group semiring of $\Gamma$ over $\N$ (we assume $0\in \N$).
Given a
grading~\eqref{eq:algebragrading} of an algebra $\mathcal{A}$ and
a graded $r$-dimensional space \eqref{eq:vsgrading}, there is a natural way to grade
the matrix algebra $M_r(\mathcal{A})$ identifying it with the tensor product $\End_{\C}(V)\otimes_{\C}\mathcal{A}$ (see e.g. \cite{EK13,NVO82} and also \cite[\S 2.5]{GS16}).
We say that this grading of $M_r(\mathcal{A})$ is \textit{induced} from the
grading~\eqref{eq:algebragrading} by the corresponding element $x\in \mathbb{N}[\Gamma]$ and denote this grading class by $[x(\mathcal{G}_{\mathcal{A}})]$.
In particular, a grading class of $M_r(\C)$ is elementary if and only if it is
induced from the trivial grading of $\C$ (see e.g. \cite{EK13,NVO82} and also \cite[Definition 2.12]{GS16}). An elementary grading class of $M_r(\C)$ which corresponds to $x\in \mathbb{N}[\Gamma]$
with $\epsilon(x)=r$ is denoted by $[x(\C)]$.\\
\textbf{Twisted group algebras.} Let $G$ be a group. A \textit{twisted group algebra} over $G$ is
a $|G|$-dimensional $\C$-algebra
\begin{equation}\label{tga}
\C^{\alpha}G=\bigoplus_{g\in G}\C u_g=\bigoplus_{g\in G} u_g\C
\end{equation}
endowed with multiplication, which is defined by
a $G$-indexed basis $\{u_{g}\}_{g\in G}$ of invertible homogeneous elements
$$u_{g_1}u_{g_2}=\alpha(g_1,g_2)u_{g_1g_2},\ \ g_1,g_2\in G,$$
where $\alpha\in Z^2(G,\C^*)$, that is a 2-cocycle over $\C^*$ with the trivial $G$-action.
Any twisted group algebra $\C^{\alpha}G$ admits a natural $G$-grading by letting
\begin{equation}\label{eq:natgradtga}
(\C^{\alpha}G)_g:=\text{span}_{\C}\{u_g\},\ \ \forall g\in G.
\end{equation}
We call these gradings \textit{twisted group algebra} [TGA] gradings. It should be pointed out that the TGA gradings $\C^{\alpha_1}G$ and $\C^{\alpha_2}G$ are graded isomorphic if and only if the 2-cocycles
$\alpha_1$ and $\alpha_2$ are cohomologically-equivalent (see e.g. \cite{EK13} and also \cite[Proposition 2.4(1)]{GS16}).
Graded-equivalence of these
TGA gradings is given in \cite[Proposition 2.4(2)]{GS16} (see also \cite{EK13}) in terms of Aut$(G)$-orbits in $H^2(G,\C^*)$. TGA grading classes by finite groups are maximal with
respect to the partial order $``\leq"$ on connected gradings of finite-dimensional algebras (see \cite[Proposition 2.31]{EK13}).

Inductions of TGA gradings, described in the above subsections, are \textit{simply-graded}, i.e. they admit no proper graded ideals (see Definition~\ref{def:simplygraded}).
Furthermore, by a generalized Maschke's theorem \cite[Theorem 4.4]{P89}, complex twisted group algebras of finite groups are semisimple,
and in turn, so are direct sums of inductions of such twisted group algebras.
Conversely, any $\Gamma$-grading of a complex semisimple algebra $\mathcal{A}$ is obtained from TGA gradings by direct sums and inductions \cite{BSZ,MR1941224,MR2488221} (see also \S\ref{gsa}).
In other words, there exist elements $x_i\in \N [\Gamma]$, subgroups $G_i<\Gamma$ and 2-cocycles $\alpha_i\in Z^2(G_i,\C^*)$
such that $[x_i(\C^{\alpha_i}G_i)]$ are the simply-graded summands of $[\mathcal{G}_{\mathcal{A}}]$, that is
\begin{equation}\label{generalsum}
[\mathcal{G}_{\mathcal{A}}]=\left[\bigoplus_{i=1}^lx_i(\C^{\alpha_i}G_i)\right].
\end{equation}

Direct summations, as well as inductions, commute with quotient
morphisms as follows. For any $x=\sum n_{\gamma} \gamma \in \N [\Gamma]$ and any normal subgroup $N\lhd \Gamma$ write $\bar{x}:=\sum n_{\gamma} \gamma N\in \N [\Gamma/N].$ Then
\begin{equation}\label{reduct}
\left[\bigslant{\mathcal{G}_{\mathcal{A}}}{N}\right]=\left[\bigoplus_{i=1}^l\overline{x_i}\left(\bigslant{\C^{\alpha_i}G_i}{N\cap G_i}\right)\right],
\end{equation}
where $\bigslant{\C^{\alpha_i}G_i}{N\cap G_i}$ is the quotient grading of the TGA grading $\C^{\alpha_i}G_i$ by the normal subgroup $N\cap G_i$ of $G_i$ for every $i=1,\cdots, l$.
Hence, the problem of finding the quotient gradings of general
gradings of complex semisimple algebras can be reduced to
\begin{problem}\label{prob1}
Let $\alpha\in Z^2(G,\C^*)$ be a 2-cocycle of a group $G$ and let $N\lhd G$ be a normal subgroup.
Find elements $y_i\in \N [G/N]$, subgroups $H_i<G/N$ and 2-cocycles $\alpha_i\in Z^2(H_i,\C^*)$ such that
$\left[\bigslant{\C^{\alpha}G}{N}\right]=\left[\bigoplus_{i=1}^ly_i(\C^{\alpha_i}H_i)\right]$.
\end{problem}
We remark that a partial answer to Problem 1.1 for $G$ abelian can be deduced from \cite[Lemma 3]{EK15}.

The twisted group algebra $\C^{\alpha}G$ determines a $G/N$-action \eqref{star} on the set Irr$(\C ^{\alpha}N)$
of isomorphism types of irreducible $\C ^{\alpha}N$-modules (alternatively,
the set Irr$(N,\alpha)$ of irreducible $\alpha$-projective representations of $N$), where we keep denoting the restriction of $\alpha \in Z^2(G,\C ^*)$ to $Z^2(N,\C ^*)$ again by $\alpha$.

For $[M]\in$Irr$(\C ^{\alpha}N)$ let $\mathcal{I}_{M}=\mathcal{I}_{\C^{\alpha}G}(M)<G/N$ be its
stabilizer subgroup (or the {\it inertia} subgroup) under
the $G/N$-action \eqref{star}, and let $T_{M}$ be a transversal set of $\mathcal{I}_{M}$ in $G/N$.
Then the orbit of $[M]$ in Irr$(\C ^{\alpha}N)$ under the $G/N$-action is given by $\{[t(M)]\}_{t\in T_{M}}$.
The following answer to Problem \ref{prob1} is Clifford's Theory in terms of quotient gradings,
it makes use of the notion of Mackey's obstruction cohomology class \cite[Theorem 8.3]{M58} over the inertia group (see Theorem~\ref{let}).
\begin{Theorem A}\label{inertia}
    Let $G$ be a finite group, $[\alpha]\in H^2(G,\C^*)$, let $N\lhd G$ a normal subgroup and let $[\mathcal{G}]$ be the quotient $G/N$-grading class of the twisted
    grading class $[\C^{\alpha}G]$. Then with the above notation
    \begin{enumerate}
\item There is a one-to-one correspondence between the simply-graded summands of $[\mathcal{G}]$  and the orbits in Irr$(\C ^{\alpha}N)$ under the $G/N$-action \eqref{star}.
        \item Let $[\mathcal{G}_{M}]$ be a simply-graded summand of $[\mathcal{G}]$ which corresponds to the  $G/N$-orbit of $[M]\in$Irr$(\C ^{\alpha}N)$.
Then $$[\mathcal{G}_{M}]=\left[x_{M}(\C^{\omega_{}}\mathcal{I}_{M})\right],$$ where $x_{M}=\text{dim}_{\C}(M)\cdot\sum _{t\in T_{M}}t\in \mathbb{N}[G/N]$ and
        $[\omega_{}]=\omega_{\mathcal{G}}([M])\in H^2(\mathcal{I}_{M},\C^*)$
        is Mackey's obstruction cohomology class which corresponds to $[M]$ with respect to the action \eqref{star}.
    \end{enumerate}
\end{Theorem A}
The proof of Theorem A involves the property of TGA grading classes $[\C^{\alpha}G]$, as well as their quotient gradings, of admitting the same dimension at every homogeneous component.
In \S\ref{equi} we discuss this equi-dimensional feature of graded algebras.

As mentioned above, complex twisted group algebras over finite groups are semisimple. In certain cases they may even be simple.
If such a complex twisted group algebra $\C^{\alpha}G$ is simple,
i.e. isomorphic to a matrix algebra $M_n(\mathbb{C})$, then the
$2$-cocycle $\alpha\in Z^2(G,\C^*)$ is called {\it
non-degenerate}. Such cocycles can be considered as
group-theoretical analogues of symplectic forms on linear spaces
\cite{david2013isotropy}. Non-degeneracy is a cohomology class
property. We thus refer also to non-degenerate cohomology classes.
A group admitting a non-degenerate cocycle is called {\it of
central type} (CT) \footnote{In \cite{Gagola} and in \cite{ShSh}
such groups are termed {\it fully ramified} and {\it central type
factor groups} respectively (see \cite[\S 2.7]{GS16}).} and is
evidently of square order. The simplicity condition says that a
cohomology class $[\alpha]\in H^2(G,\C^*)$ is non-degenerate if
and only if $\Irr(\C ^{\alpha}G)$ is a singleton. Obviously, in this case the
unique element $[M]\in\Irr(\C ^{\alpha}G)$ satisfies
    $\dim_{\C}(M)=\sqrt{|G|}$ (see \cite[\S 1]{ShSh}).
The following is a consequence of Mackey's correspondence that can also be directly derived from Theorem A (see also Corollary \ref{elqu}).
\begin{Corollary B}
Let $G$ be a group of CT.    If both $[\alpha]\in H^2(G,\C^*)$ and its restriction to a normal subgroup $N\lhd G$ are non-degenerate,
    then the quotient group $G/N$ is of CT.
\end{Corollary B}

Corollary B brings us to matrix algebras. Let (Grad$(M_n(\C)),\leq)$ be the poset of all connected grading classes of ${\mathcal{A}}=M_n(\C)$.
By \eqref{generalsum} (see also Theorem \ref{BSZ}), any grading class
$[\mathcal{G}_{M_n(\C)}]\in$Grad$(M_n(\C))$ is induced from a
TGA grading class $[\C ^{\alpha}G]$ for some non-degenerate $\alpha \in Z^2(G,\C^*)$ of a CT group $G$, whose order divides $n^2$.

\begin{notation}\label{not}
Let $G$ be a group of CT whose order divides $n^2$, with $\alpha \in Z^2(G,\C^*)$ non-degenerate. Denote by
\begin{equation}\label{poset}
\Gr(M_n(\C))_{\C ^{\alpha}G}\subseteq\Gr(M_n(\C))
\end{equation}
the set of grading classes which are induced from a TGA class of $\C ^{\alpha}G$ as above.
Further, for $[\mathcal{G}]\in$Grad$(M_n(\C))$ we denote
\begin{equation}\label{eq:allquotient}
[\mathcal{G}]_{\C ^{\alpha}G}:=\text{Grad}(M_n(\C))_{\C ^{\alpha}G}\cap[\mathcal{G}]_{\geq}(=\{[\mathcal{G}']\in\text{Grad}(M_n(\C))_{\C ^{\alpha}G}|\ [\mathcal{G}']\leq[\mathcal{G}]\}).
\end{equation}
In particular $[\mathcal{G}]_{\C }$ is the set of all elementary quotient classes of $[\mathcal{G}]$.
\end{notation}
It should be remarked that the set \eqref{eq:allquotient} may be empty.
Next, the partially ordered subset (Grad$(M_n(\C))_{\C ^{\alpha}G},\leq)$ in \eqref{poset}, where $G$ a CT group whose order divides $n^2$ and $\alpha \in Z^2(G,\C^*)$ non-degenerate, admits a maximum
herein explained. Let $$d=d({G})=\frac{n}{\sqrt{|G|}},$$ let
$\mathcal{F}_{d-1}$ be the free group of rank $d-1$ generated by,
say, $\{x_1,\cdots, x_{d-1}\}$ and let
\begin{equation}\label{maxinduceelement}
\tilde{x}^d:=1+\sum_{i=1}^{d-1}x_i\in \N[\mathcal{F}_{d-1}].
\end{equation}
Then the maximum of
Grad$(M_n(\C))_{\C ^{\alpha}G}$ is graded by the free product
$\mathcal{F}_{d-1}*G$ of the free group $\mathcal{F}_{d-1}$ and the CT group $G$, and is of the form
\begin{equation}\label{maxelement}
\max(\Gr(M_n(\C))_{\C ^{\alpha}G})=[\tilde{x}^d(\C ^{\alpha}G)],
\end{equation}
i.e. it is induced from the TGA class $[\C ^{\alpha}G]$ by the element \eqref{maxinduceelement}.
Since any grading class in $\text{Grad}(M_n(\C))$ belongs to some $\Gr(M_n(\C))_{\C ^{\alpha}G}$, then
all the maximal classes of $\text{Grad}(M_n(\C))$ are of the form \eqref{maxelement} (see \cite[Proposition 2.31 and Corollary 2.34]{EK13},\cite[Theorem 1.3]{GS16}).
In particular, there are finitely many maximal classes in $\text{Grad}(M_n(\C))$.

The second step in the procedure of estimating the intrinsic fundamental group of a finite-dimensional algebra $\mathcal{A}$ is to
find the maximal common quotients in the partially-ordered subset \eqref{intsect}, running over the maximal classes $[\mathcal{G}_1],[\mathcal{G}_2]\in\text{Grad}(\mathcal{A})$.
For $\mathcal{A}=M_n(\C)$ this task boils down to studying maximal classes in the intersection
\begin{equation}\label{comquot matalg}
\left[\tilde{x}^{d_1}(\C^{\alpha_1}G_1)\right]_{\geq}\ \bigcap\ \left[\tilde{x}^{d_2}(\C^{\alpha_2}G_2)\right]_{\geq}
\end{equation}
for all maximal grading classes \eqref{maxelement} of $\mathcal{A}$.
Let us focus on the case where $d_1=1$ and $G_2=\{e\}$ (or $d_2=n$) in \eqref{comquot matalg}, that is the elementary quotients of TGA grading classes.
Under this setup, the problem is rephrased as
\begin{problem}\label{q:maxelemunique}
Let $G$ be a CT group and $[\alpha]\in H^2(G,\mathbb{C}^*)$ non-degenerate.
Describe the maximal classes in $[\C^{\alpha}G]_{\C}$ (see \eqref{eq:allquotient}).
In particular, determine when $[\C^{\alpha}G]_{\C}$ admits a \textit{unique} maximal class.
\end{problem}
For $|\Gamma|=n$, there exists a special elementary $\Gamma$-grading of $M_n(\C)$, namely an {\it elementary crossed product $\Gamma$-grading}
\footnote{In \cite[Definition 3]{ak13} these gradings are just called ``crossed-products".
Our notion of crossed-products is more common, see \S \ref{CPSec}.}.
This is the graded endomorphism algebra of the group algebra $\C \Gamma$ as a free module over itself.
 Such a $\Gamma$-grading is induced from the trivial grading on $\C$ by $\sum _{\gamma\in \Gamma} \gamma\in\mathbb{N}[\Gamma]$ and is therefore denoted by $\sum _{\gamma\in \Gamma} \gamma(\C)$.

TGA grading classes $[\C ^{\alpha}G]$ of complex matrix algebras do not necessarily admit an elementary crossed product quotient grading class.
However, if a TGA grading class $[\C ^{\alpha}G]$ of a complex matrix algebra does admit an elementary crossed product quotient grading class,
then this elementary crossed product class is maximal (not necessarily unique) among the elementary quotient grading classes of $[\C ^{\alpha}G]$ (see Corollary~\ref{cor:CPmax}). We therefore ask
\begin{question}\label{questiona}
Let $G$ be a CT group, and let $[\alpha]\in H^2(G,\mathbb{C}^*)$ be non-degenerate. For which normal subgroups $N\lhd G$ (if at all)
is the quotient $[\bigslant{\C^{\alpha}G}{N}]$ an elementary crossed product grading class?
\end{question}
An answer to Question \ref{questiona} involves the group-theoretical concept of a Lagrangian subgroup hereby explained. Let $[\alpha]\in H^2(G,\C^*)$ be a cohomology class of a finite group $G$.
A subgroup $H<G$ is called {\it $[\alpha]$-isotropic} if $[\alpha]$ is trivial when restricted to $H$.
If, additionally, $[\alpha]$ is non-degenerate and $|H|^2=|G|$, we say that $H$ is a {\it Lagrangian} with respect to $[\alpha]$.
The following theorem answers Question \ref{questiona}.
\begin{Theorem C}\label{tgacp}
A quotient $[\bigslant{\C^{\alpha}G}{N}]$ of a TGA grading class $[\C^{\alpha}G]$ of a simple complex algebra is an elementary crossed product grading class
if and only if the normal subgroup $N\lhd G$ is an abelian Lagrangian with respect to the non-degenerate cohomology class $[\alpha]\in H^2(G,\C^*)$.
\end{Theorem C}
As opposed to symplectic linear forms, non-degenerate cohomology classes do not necessarily admit normal Lagrangians as shown in \cite[\S 1]{david2013isotropy}.
Thus, by Theorem C there exist maximal elementary quotients of TGA grading classes which are not elementary crossed product gradings.
However, if a CT group $G$ is either abelian or nilpotent of order which is free of eighth powers then any non-degenerate $[\alpha]\in H^2(G,\C^*)$ admits a normal Lagrangian
\cite[Theorem 1.9]{david2013isotropy}.
By Theorem C, the corresponding simple twisted group algebras over these two families of CT groups admit elementary crossed product quotient classes, answering Question \ref{questiona}.
For the first one of these families, namely of the abelian CT groups, the following result suggests a solution to Problem \ref{q:maxelemunique} in terms of elementary crossed product grading classes.
\begin{Theorem D}\label{th:Theorem A}
Let $A$ be an abelian group of CT whose order is $n^2$ and let $[\alpha]\in H^2(A,\C ^*)$ be non-degenerate.
\begin{enumerate}
\item  An elementary quotient grading class in $\text{Grad}(M_n(\C))$ is maximal in $[\C ^{\alpha} A]_{\C}$ if and only if it is an elementary crossed product grading class.
 \item There is a unique maximal class in $\left[\C ^{\alpha} A\right]_{\C}\subseteq\text{Grad}(M_{n}(\C))$
 if and only if $A$ is elementary abelian.
\end{enumerate}
\end{Theorem D}
For the following remark, recall that (see \S\ref{abCT}) for any $n\in \N$, the group $A=C_n\times C_n$ admits a non-degenerate cohomology class $[\alpha]\in H^2(A,\C ^*)$.
\begin{Remark E}
In \cite[Proposition 4.15]{cibils2010} it is claimed that for every $n$ the grading classes
$[1(\C^{\alpha} C_n\times C_n)]$ and $[\tilde{x}^n(\C)]$ of $M_n(\mathbb{C})$
possess a \textit{unique} maximal common quotient grading class (graded by the cyclic group $C_n$), namely the elementary crossed product grading class $\left[\left( \sum _{h\in C_n}h\right)(\C)\right]$.
However, the proof of maximality of this class is not
precise. Moreover, by Theorem D for the abelian group $A:=C_n\times C_n$, the uniqueness part is true if and only if $n$ is square-free.
Nevertheless, the computation of $\pi _1 (M_p(\mathbb{C}))$ in \cite{cibils2010} is unaffected for primes $p$.
\end{Remark E}

It is natural to pose a converse to Question \ref{questiona}.
\begin{question}\label{questionb}
Given a group $H$ of order $n$,
does there exist a CT group $G$ (of order $n^2$) and a non-degenerate $[\alpha] \in H^2(G,\mathbb{C}^*)$ such that the elementary crossed product grading class $\left[\sum _{h\in H}h(\mathbb{C})\right]$
is a quotient grading of $[\mathbb{C} ^{\alpha}G$]?
\end{question}

A highly non-trivial theorem says that CT groups are solvable \cite{isaacs,LY}.
Consequently, a necessary condition for $H$ to be a quotient of a CT group $G$ is that $H$ itself is solvable.
It turns out that solvability of $H$ is not sufficient for an affirmative answer to Question \ref{questionb}. For this purpose we need the notion of {\it involutive Yang-Baxter} (IYB) groups.
A finite group $H$ is IYB \cite{CJdR} if it admits a module $M$ and a 1-cocycle $\delta\in Z^1(H,M)$ which is bijective.
Identification of an IYB group $H$ and its module $M$ via the bijection $\delta:H\to M$ yields a unified structure, furnished with two compatible operations.
It was firstly introduced in \cite{rump2007braces} and named a {\it brace}. The current extensive study of braces is surveyed in \cite{Cedo}.
By \cite[Theorem 2.15]{ESS} IYB groups are indeed solvable, however in \cite{B16} a certain finite nilpotent group is shown not to be IYB.
The answer to Question \ref{questionb} yields an alternative characterization of braces as follows.
\begin{Theorem F}
A group $H$ of order $n$ is IYB if and only if its corresponding elementary crossed-product grading class $\left[\sum _{h\in H}h(\mathbb{C})\right]$ is a quotient of
some TGA grading class of $M_n(\mathbb{C})$.
\end{Theorem F}
Lastly, the intrinsic fundamental group of the diagonal algebra $\C^i$ is
computed in \cite{cibils2010} for $1\leq i\leq 4$. The case $i=4$
\cite[Theorem 6.9]{cibils2010} turns out to be incorrect. We fix
this error and compute $\pi_1(\C^i)$ for $i=4,5$.
\begin{Theorem G}
The intrinsic fundamental groups of the complex diagonal algebras of ranks 4 and 5 are
\begin{enumerate}
\item   $\pi _1(\C ^4)\cong H_4 \times C_6$, where $H_4$ is the central extension
$$\quad 1\rightarrow \stackrel{\{ \pm 1\}}{C_2}\rightarrow H_4\stackrel{\beta_4}{\rightarrow} \stackrel{\langle a\rangle}{C_2}*\stackrel{\langle b\rangle}{C_2}\rightarrow 1$$
determined by $\bar{a}^2=\bar{b}^2=-1$ for a choice of $\bar{a}\in \beta_4 ^{-1}(a)$ and $\bar{b}\in \beta_4 ^{-1}(b)$.
\vspace*{0.5cm}
\item
    $\pi _1(\C ^5)\cong H_5 \times C_{10},$ where $H_5$ is the central extension
$$\quad 1\rightarrow \stackrel{\{ \pm 1\}}{C_2}\rightarrow H_5\stackrel{\beta_5}{\rightarrow} \stackrel{\langle z\rangle}{C_6}*\stackrel{\langle w\rangle}{C_2}\rightarrow 1$$
determined by $\bar{z}^6=\bar{w}^2=-1$ for a choice of $\bar{z}\in \beta_5 ^{-1}(z)$ and $\bar{w}\in \beta_5 ^{-1}(w)$.
\end{enumerate}
\end{Theorem G}
\section{Preliminaries}\label{review}
We begin this section recording a group-theoretical result that is useful in \S\ref{equi}. For a subgroup $G<\Gamma$
let $[\Gamma:G]=\{\gamma G\}_{\gamma\in \Gamma}$ be the corresponding set of left cosets.
Any $\gamma \in \Gamma$ gives rise to a one-to-one correspondence
\begin{eqnarray}\label{sigamma}\sigma _{\gamma}:
\begin{array}{ccc}
[\Gamma:G]&\rightarrow &[\Gamma:G]\\
\gamma'G&\mapsto &\gamma\cdot\gamma'G
\end{array},
\end{eqnarray}
and hence determines a permutation in $\text{Sym}_{[\Gamma:G]}$. The following result can easily be verified and appears in many textbooks.
\begin{lemma}\label{lemma:permu}
    With the above notation, the map
    \begin{eqnarray}\label{psigamma}
    \begin{array}{ccc}
    \Gamma&\rightarrow &\text{Sym}_{[\Gamma:G]}\\
    \gamma&\mapsto &\sigma _{\gamma}
    \end{array}
    \end{eqnarray}
    is a group homomorphism determining a transitive left action of $\Gamma$ on $[\Gamma:G]$.
\end{lemma}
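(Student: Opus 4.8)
The plan is to verify directly the two assertions packaged in the statement: that $\gamma\mapsto\sigma_\gamma$ is a group homomorphism with values in $\text{Sym}_{[\Gamma:G]}$, and that the induced action of $\Gamma$ on $[\Gamma:G]$ is transitive. First I would confirm that each $\sigma_\gamma$ is a well-defined self-map of $[\Gamma:G]$: the rule $\gamma'G\mapsto\gamma\gamma'G$ does not depend on the chosen representative, because $\gamma'G=\gamma''G$ forces $\gamma\gamma'G=\gamma\gamma''G$. (This is the only point that requires even a moment's attention.)

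Next I would establish the homomorphism identity $\sigma_{\gamma_1\gamma_2}=\sigma_{\gamma_1}\circ\sigma_{\gamma_2}$. Evaluating both sides on an arbitrary coset $\gamma'G$ yields $(\gamma_1\gamma_2)\gamma'G$ on the left and $\gamma_1\bigl((\gamma_2\gamma')G\bigr)=\bigl(\gamma_1(\gamma_2\gamma')\bigr)G$ on the right, and these coincide by associativity in $\Gamma$. Combined with the obvious equality $\sigma_e=\mathrm{id}_{[\Gamma:G]}$, this shows that $\sigma_\gamma$ has two-sided inverse $\sigma_{\gamma^{-1}}$, so each $\sigma_\gamma$ is a bijection and the map of \eqref{psigamma} indeed lands in $\text{Sym}_{[\Gamma:G]}$ and is a group homomorphism; equivalently, it defines a left action of $\Gamma$ on $[\Gamma:G]$.

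Finally, for transitivity I would take two arbitrary cosets $\gamma_1G,\gamma_2G\in[\Gamma:G]$ and observe that the element $\gamma_2\gamma_1^{-1}\in\Gamma$ satisfies $\sigma_{\gamma_2\gamma_1^{-1}}(\gamma_1G)=\gamma_2\gamma_1^{-1}\gamma_1G=\gamma_2G$; hence the orbit of the coset $eG=G$ is all of $[\Gamma:G]$. There is no genuine obstacle in this proof — once well-definedness on cosets is noted, everything reduces to the group axioms — which is exactly why the lemma is stated as easily verifiable; the content lies in having it recorded for use in \S\ref{equi}.
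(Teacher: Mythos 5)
Your proof is correct, and the paper in fact gives no proof at all (it says only that the lemma "can easily be verified"), so your direct verification — well-definedness of $\sigma_\gamma$, the identity $\sigma_{\gamma_1\gamma_2}=\sigma_{\gamma_1}\circ\sigma_{\gamma_2}$ hence bijectivity via $\sigma_{\gamma^{-1}}$, and transitivity via $\gamma_2\gamma_1^{-1}$ — is exactly the routine check the authors had in mind.
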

\subsection{Ablian groups of central type and their non-degenerate classes}\label{abCT}
Let $A$ be a finite abelian group. The second cohomology group $H^2(A,\C^*)$ is easily described as follows.
A function $\chi :A\times A\to \C^*$ is called an \textit{alternating bi-character} (see \cite[P. xx]{EK13}) if for every $\sigma,\tau,\eta\in A$
\begin{enumerate}
\item $\chi(\sigma\tau,\eta)=\chi(\sigma,\eta)\chi(\tau,\eta),$
\item $\chi(\sigma,\tau\eta)=\chi(\sigma,\tau)\chi(\sigma,\eta)$, and
\item $\chi(\sigma,\sigma)=1$.
\end{enumerate}
The set of alternating bi-characters of $A$ is an abelian group itself with the pointwise multiplication.
Then, any 2-cocycle $\alpha\in Z^2(A,\C^*)$ determines an alternating bi-character
\begin{eqnarray}\label{betaf}\chi_{\alpha}:
\begin{array}{rcl}A\times A&\to& \C^*\\
(\sigma,\tau)&\mapsto & \alpha(\sigma,\tau)\cdot \alpha(\tau,\sigma)^{-1}
\end{array},
\end{eqnarray}
such that the rule $[\alpha]\mapsto\chi_{\alpha}$ is a well-defined isomorphism between $H^2(A,\C^*)$ and the group of alternating bi-characters of $A$.
Explicitly, if $\{u_{\sigma}\}_{\sigma\in A}$ is a basis of invertible homogeneous elements of $\C^{\alpha}A$ as in \eqref{tga} then
\begin{equation}\label{commust}
\chi_{\alpha}(\sigma,\tau)=u_{\sigma}\cdot u_{\tau}\cdot u_{\sigma}^{-1}\cdot u_{\tau}^{-1}=[u_{\sigma}, u_{\tau}],
\end{equation}
that is the multiplicative commutator of the invertible elements $u_{\sigma}$ and $u_{\tau}$ for every $\sigma,\tau\in A$.
The following well-known theorem characterizes non-degeneracy in terms of alternating bi-characters.
\begin{theorem}\label{CTab}(see e.g. \cite[\S 3.1]{GS16})
Let $A$ be abelian group. Then $[\alpha]\in H^2(A,\C ^*)$ is non-degenerate if and only if there exists a subgroup (which depends on $[\alpha]$)
\begin{equation}\label{A1dec}
A_1=\langle x_1\rangle \times \langle x_2\rangle \times \ldots \times \langle x_r\rangle\cong C_{n_1}\times C_{n_2}\times \ldots \times C_{n_r}
\end{equation}
of $A$,
and an embedding of abelian groups $\phi:A_1\to A$, such that $A=A_1\times \phi (A_1)$ and whose corresponding alternating bi-character \eqref{betaf} is
$$\chi_{\alpha}:\left(\prod_{i=1}^{r}x_i^{l_i}\cdot\phi(x_i)^{l'_i},\prod_{i=1}^{r}x_i^{m_i}\cdot\phi(x_i)^{m'_i}\right)\mapsto\prod_{i=1}^{r}\zeta _i^{l_i\cdot m'_i-l'_i\cdot m_i},$$
where $\zeta _i$ is a primitive $n_i$-th root of unity for every $1\leq i\leq r$.
\end{theorem}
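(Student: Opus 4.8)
The plan is to reduce the assertion to the classical classification of non-degenerate alternating bi-characters on a finite abelian group, and then to obtain that classification by a symplectic-basis induction. Throughout write $\chi:=\chi_\alpha$ and let $\mathrm{Rad}(\chi):=\{\sigma\in A:\chi(\sigma,\tau)=1\ \text{for all }\tau\in A\}$ be its radical.

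\emph{Step 1 (from $[\alpha]$ to $\chi$).} Since $A$ is finite, $\C^\alpha A$ is semisimple, so it is simple --- i.e.\ $[\alpha]$ is non-degenerate --- if and only if its center is one-dimensional. For a basis $\{u_\sigma\}_{\sigma\in A}$ as in \eqref{tga}, formula \eqref{commust} gives $u_\sigma u_\tau=\chi(\sigma,\tau)\,u_\tau u_\sigma$; hence $\sum_\sigma c_\sigma u_\sigma$ is central precisely when $c_\sigma=0$ for every $\sigma\notin\mathrm{Rad}(\chi)$, so $\dim_{\C}Z(\C^\alpha A)=|\mathrm{Rad}(\chi)|$. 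Thus $[\alpha]$ is non-degenerate if and only if $\mathrm{Rad}(\chi)=\{e\}$, i.e.\ $\chi$ is a non-degenerate alternating bi-character. It therefore suffices to prove that a non-degenerate alternating bi-character on a finite abelian group has the displayed normal form relative to some decomposition $A=A_1\times\phi(A_1)$, and conversely that every bi-character of that form is non-degenerate.

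\emph{Step 2 (the ``if'' direction).} Given the decomposition and the displayed formula, suppose $g=\prod_i x_i^{l_i}\phi(x_i)^{l_i'}\in\mathrm{Rad}(\chi)$. Pairing $g$ with $\phi(x_j)$ yields $\zeta_j^{\,l_j}=1$, so $n_j\mid l_j$ and $x_j^{l_j}=e$; pairing $g$ with $x_j$ yields $\zeta_j^{-l_j'}=1$, so $\phi(x_j)^{l_j'}=e$. As this holds for every $j$, $g=e$, so $\chi$ is non-degenerate and, by Step 1, so is $[\alpha]$.

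\emph{Step 3 (the ``only if'' direction).} Assume $\chi$ is non-degenerate and argue by induction on $|A|$, the case $A=\{e\}$ being vacuous (take $r=0$). Pick $x_1\in A$ of maximal order $n_1=\exp(A)$. The character $\tau\mapsto\chi(x_1,\tau)$ has cyclic image of some order $m\mid n_1$; if $m<n_1$ then $x_1^m\in\mathrm{Rad}(\chi)=\{e\}$, contradicting $\mathrm{ord}(x_1)=n_1$, so $m=n_1$ and there is $y\in A$ with $\zeta_1:=\chi(x_1,y)$ a primitive $n_1$-th root of unity. Then $\mathrm{ord}(y)=n_1$ (it divides $\exp(A)=n_1$, and $n_1=\mathrm{ord}(\zeta_1)$ divides it), and $x_1^a=y^b$ forces $\zeta_1^{\,a}=1$, hence $x_1^a=e=y^b$; so $B:=\langle x_1\rangle\times\langle y\rangle\cong C_{n_1}\times C_{n_1}$ and, by the same bi-multiplicative computation as in Step 2, $\chi|_B$ is non-degenerate and in the rank-one normal form with $\phi(x_1):=y$. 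Now put $B^{\perp}:=\{\tau\in A:\chi(\tau,\beta)=1\ \text{for all }\beta\in B\}$, a subgroup. Since $\chi|_B$ is non-degenerate, $\beta\mapsto\chi(\beta,-)|_B$ is an isomorphism from $B$ onto the character group of $B$; hence each $\tau\in A$ satisfies $\chi(\tau,-)|_B=\chi(b_0,-)|_B$ for some $b_0\in B$, so $b_0^{-1}\tau\in B^{\perp}$ and $\tau\in B\cdot B^{\perp}$, while $B\cap B^{\perp}\subseteq\mathrm{Rad}(\chi|_B)=\{e\}$. Thus $A=B\times B^{\perp}$; moreover $\chi$ is block-diagonal on this product (the cross terms vanish by the definition of $B^{\perp}$), and $\chi|_{B^{\perp}}$ is non-degenerate because an element of $B^{\perp}$ orthogonal to $B^{\perp}$ is orthogonal to all of $A=B\times B^{\perp}$. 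By induction $B^{\perp}=A_1'\times\phi'(A_1')$ in normal form; adjoining the factor $\langle x_1\rangle$ and extending $\phi'$ by $\phi(x_1)=y$ gives $A=A_1\times\phi(A_1)$ with $A_1=\langle x_1\rangle\times A_1'$, and combining the rank-one block on $B$ with the normal form on $B^{\perp}$ via bi-multiplicativity produces exactly the displayed formula for $\chi=\chi_\alpha$.

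\emph{Main obstacle.} The only step that is not bi-multiplicative bookkeeping is the orthogonal splitting $A=B\times B^{\perp}$ in Step 3: the identity $B\cdot B^{\perp}=A$ rests on the fact that a non-degenerate pairing on a finite abelian group $B$ identifies $B$ with its character group, which is precisely where finiteness of $A$ and the presence of all roots of unity in $\C^*$ are used. The reduction in Step 1 relating $\mathrm{Rad}(\chi_\alpha)$ to $Z(\C^\alpha A)$, together with semisimplicity of $\C^\alpha A$, handles the passage between the algebraic and the group-theoretic formulations.
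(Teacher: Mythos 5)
Your proof is correct. The paper does not actually prove Theorem~\ref{CTab}; it cites it as a known fact (``see e.g.\ \cite[\S 3.1]{GS16}''), so there is no in-text argument to compare against. What you have written is the standard route: (i)~translate non-degeneracy of $[\alpha]$ into triviality of the radical of $\chi_\alpha$ by computing $Z(\C^\alpha A)$ via the commutation relation $u_\tau u_\sigma u_\tau^{-1}=\chi_\alpha(\tau,\sigma)u_\sigma$; (ii)~establish the normal form of a non-degenerate alternating bi-character on a finite abelian group by the Witt-type induction that peels off one hyperbolic block $B=\langle x_1\rangle\times\langle y\rangle\cong C_{n_1}\times C_{n_1}$ at a time and splits $A=B\times B^{\perp}$. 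The two points you isolate as the real content are indeed the crux: the injectivity of $B\to\widehat{B}$ forced by non-degeneracy of $\chi|_B$ (plus $|B|=|\widehat{B}|$) gives $A=B\cdot B^\perp$, and the alternating property of $\chi_\alpha$ (which you correctly observe is the genuinely vanishing-on-the-diagonal property, $\chi_\alpha(\sigma,\sigma)=1$, not merely skew-symmetry) is what makes $\langle x_1\rangle\cap\langle y\rangle=\{e\}$ and the cross-terms behave. The bookkeeping in Step~2 and the reassembly at the end of Step~3 both check out. As a small cosmetic remark: when you verify the final displayed formula, it is worth stating explicitly that the inductive hypothesis forces $\chi|_{A_1'}$ and $\chi|_{\phi'(A_1')}$ to be trivial (take $l_i'=m_i'=0$, resp.\ $l_i=m_i=0$, in the normal form for $B^\perp$), since the block-diagonality of $\chi$ on $B\times B^\perp$ alone does not yet say the $A_1$-block is ``Lagrangian-like''; you use this fact implicitly when you conclude the formula ``by bi-multiplicativity.''
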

In particular, for any $n\in \N$ the group $C_n\times C_n$ is of CT (see Remark E).
It is not hard to verify that in this case any choice of a cyclic subgroup $A_1<A$ of order $n$ is good for the decomposition in Theorem \ref{CTab}.
The following is another immediate consequence of Theorem \ref{CTab}.
\begin{corollary}\label{CorCTab}
Let $B\subseteq\{1,\cdots,r\}$ be any subset.
Then with the notation \eqref{A1dec}, the restriction of $[\alpha]$ to $\langle\{ x_i\}_{i\in B}\rangle\times \phi (\langle\{ x_i\}_{i\in B}\rangle)$ is non-degenerate as well.
\end{corollary}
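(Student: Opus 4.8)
The plan is to read off the restricted cohomology class directly from its alternating bi-character and then invoke the ``if'' direction of Theorem \ref{CTab}. The first observation I would record is that the isomorphism $[\alpha]\mapsto\chi_{\alpha}$ between $H^2(A,\C^*)$ and the group of alternating bi-characters of $A$ is compatible with restriction: for a subgroup $K<A$, formula \eqref{betaf} (or the commutator description \eqref{commust}) shows that $\chi_{\alpha|_K}$ is nothing but $\chi_{\alpha}$ restricted to $K\times K$. Hence checking non-degeneracy of $[\alpha|_K]$ amounts to checking that $\chi_{\alpha}|_{K\times K}$ has the shape prescribed by Theorem \ref{CTab} for the group $K$.

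Next I would fix $B\subseteq\{1,\dots,r\}$ and set $A_B:=\langle\{x_i\}_{i\in B}\rangle<A_1$, so that the relevant subgroup is $K:=A_B\times\phi(A_B)$. Because $A=A_1\times\phi(A_1)$ is an (internal) direct product and $A_B\leq A_1$, the restriction $\phi|_{A_B}$ remains an injective homomorphism whose image $\phi(A_B)$ meets $A_B$ trivially; therefore $K=A_B\times\phi(A_B)$ is indeed a direct product inside $A$, and $\phi|_{A_B}\colon A_B\to A$ is an embedding with $K=A_B\times\phi|_{A_B}(A_B)$. This puts $K$ in exactly the format of the decomposition \eqref{A1dec}, now with the index set $B$ in place of $\{1,\dots,r\}$ and with the same cyclic generators $x_i$ (of the same orders) for $i\in B$.

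The core step is then the computation: a general pair of elements of $K$ is $\prod_{i\in B}x_i^{l_i}\phi(x_i)^{l_i'}$ and $\prod_{i\in B}x_i^{m_i}\phi(x_i)^{m_i'}$, i.e.\ the general elements of $A$ from Theorem \ref{CTab} with all exponents indexed outside $B$ equal to zero. Plugging these into the explicit formula for $\chi_{\alpha}$ quoted in Theorem \ref{CTab} gives $\prod_{i\in B}\zeta_i^{\,l_im_i'-l_i'm_i}$, where each $\zeta_i$ is still a primitive $n_i$-th root of unity. This is precisely the normal form of Theorem \ref{CTab} for the group $K$, so that theorem (the ``if'' implication) yields that $[\alpha|_K]$ is non-degenerate, which is the assertion.

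I do not expect a genuine obstacle here; the only points demanding care are the bookkeeping ones flagged above, namely the naturality of $[\alpha]\mapsto\chi_{\alpha}$ under restriction and the verification that $A_B\times\phi(A_B)$ really is an internal direct product (so that the hypotheses of Theorem \ref{CTab} are literally met and one is not merely matching formulas). Once these are in place the result is immediate from Theorem \ref{CTab}.
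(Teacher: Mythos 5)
Your proof is correct and is exactly the ``immediate consequence'' reading that the paper intends (the paper states Corollary~\ref{CorCTab} without proof). The three points you flag — naturality of $[\alpha]\mapsto\chi_\alpha$ under restriction, the fact that $A_B\times\phi(A_B)$ is a genuine internal direct product inside $A$, and that plugging elements with support in $B$ into the explicit formula for $\chi_\alpha$ reproduces the normal form of Theorem~\ref{CTab} for $K$ — are precisely the bookkeeping checks that make the ``if'' direction of Theorem~\ref{CTab} applicable, so your argument is complete and matches the intended route.
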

\subsection{Graded modules, graded endomorphisms and induction}\label{inductionsec}
A left (right) module of a $\Gamma$-graded complex algebra \eqref{eq:algebragrading} is {\it graded} if it affords a linear decomposition
    \begin{equation}\label{modec}
    \mathcal{W}_{}=\bigoplus_{\gamma\in \Gamma} \mathcal{W}_{\gamma}
    \end{equation}
    which respects the grading \eqref{eq:algebragrading}, that is
    for every $\gamma_1,\gamma_2\in \Gamma$
    \begin{equation}\label{stgrmod}
\mathcal{A}_{\gamma_1}\mathcal{W}_{\gamma_2}\subseteq \mathcal{W}_{\gamma_1\cdot\gamma_2}
(\text{respectively, }\mathcal{W}_{\gamma_1}\mathcal{A}_{\gamma_2}\subseteq \mathcal{W}_{\gamma_1\cdot\gamma_2}).
    \end{equation}
%Any two-sided free $\mathcal{A}$-module is an immediate example of a left and right graded module over \eqref{eq:algebragrading}.
%\subsection{Graded endomorphisms}\label{gsaend}
Let \eqref{modec} be a graded left module over a graded algebra \eqref{eq:algebragrading}.
A $\C$-endomorphism $\varphi:\mathcal{W}_{}\to\mathcal{W}_{}$ is \textit{left (right) homogeneous of degree $\gamma$} if
$\varphi(\mathcal{W}_{\gamma'})\subseteq\mathcal{W}_{\gamma\cdot\gamma'}$
(respectively, $\varphi(\mathcal{W}_{\gamma'})\subseteq\mathcal{W}_{\gamma'\cdot\gamma}$) for every $\gamma'\in\Gamma$.
The $\C$-space of left (right) homogeneous $\mathcal{W}_{}$-endomorphisms of degree $\gamma\in \Gamma$ is denoted by End$_{\C}^{\textmd{l}(\gamma)}(\mathcal{W}_{})$
(respectively, End$_{\C}^{\textmd{r}(\gamma)}(\mathcal{W}_{})$).
The direct sums of these homogeneous subspaces of End$_{\C}(\mathcal{W}_{})$
are $\Gamma$-graded algebras denoted by
\begin{eqnarray}\label{grend}
\begin{array}{l}\text{End}_{\C}^{\text{l}(\Gamma)}(\mathcal{W}_{}):=\bigoplus_{\gamma\in \Gamma}\text{End}_{\C}^{\textmd{l}(\gamma)}(\mathcal{W}_{}),\text{ and }\\
\text{End}_{\C}^{\text{r}(\Gamma)}(\mathcal{W}_{}):
=\bigoplus_{\gamma\in \Gamma}\text{End}_{\C}^{\textmd{r}(\gamma)}(\mathcal{W}_{}).
\end{array}
\end{eqnarray}

The algebra of right graded endomorphisms $\text{End}_{\C}^{\text{r}(\Gamma)}(\mathcal{W}_{})$ contains an important $\Gamma$-graded subalgebra
\begin{equation}\label{intwin}
\text{End}^{\text{r}(\Gamma)}_{\mathcal{A}}(\mathcal{W})\subseteq \text{End}_{\C}^{\text{r}(\Gamma)}(\mathcal{W}_{})
\end{equation}
of right graded endomorphisms of $\mathcal{W}$, which intertwine with its left $\mathcal{A}$-action. Then $\mathcal{W}$ can be regarded as a $\Gamma$-graded
$(\mathcal{A},\text{End}^{\text{r}(\Gamma)}_{\mathcal{A}}(\mathcal{W}))$-bimodule.

If $\mathcal{W}_{}$ is finite-dimensional, or more generally, if supp$_{\Gamma}(\mathcal{W})$ is finite,
then both graded algebras \eqref{grend} coincide with the ungraded endomorphism algebra $\text{End}_{\C}(\mathcal{W}_{})$ (see \cite[P. 2]{EK13}).

With the above notation, here is a description of induced gradings and their support in more details.
Let $V$ be an $r$-dimensional vector space. As usual, this space is regarded as an End$_{\C}(V)$-module.
Let \eqref{eq:vsgrading} be a $\Gamma$-grading of $V$. Can the group $\Gamma$ grade the algebra End$_{\C}(V)$ in such a way that $V$ is a left graded module over it?
Indeed, the $\Gamma$-graded space \eqref{eq:vsgrading} well-determines a character \eqref{char}.
Let \begin{equation}\label{tuple}
(\gamma_1,\cdots,\gamma_r)\in \Gamma^r
\end{equation}
be any $r$-tuple of group elements, such that $\sum _{i=1}^r \gamma _i$ is equal to the character $x\in \mathbb{N}[\Gamma]$
(evidently, the tuple can be ordered in few ways). Then there exists a base $\mathcal{E}:=\{e_1,\cdots,e_r\}$ of $V$ such that $e_i\in V_{\gamma_i}$ for every $i=1,\cdots,r$.

Let $E_{i,j}$ be the elementary matrix whose $(i,j)$-th entry is 1, whereas all its other entries are 0. Considering this matrix as a $V$-endomorphism under
the choice of the base $\mathcal{E}$, then
\begin{eqnarray}\label{Eij}
E_{i,j}:\left\{
\begin{array}{rcl} V_{\gamma_j}&\to& V_{\gamma_i}, \text{ and }\\
 V_{\gamma_l}&\to& 0, \text{   if   }l\neq j.
\end{array}\right.
\end{eqnarray}

Impose now the demand that $V$ should be a $\Gamma$-graded left module over End$_{\C}(V)$. Then from \eqref{Eij} it follows that $E_{i,j}$
must be homogeneous of degree $\gamma_i\cdot\gamma _j^{-1}$.
Then under this $\Gamma$-grading the elementary matrices form a $\C$-basis of $ M_r(\C)$ consisting of homogeneous elements. Thus, the elementary $\Gamma$-grading
 $$\mathcal{G}_{ M_r(\C)}:
 \begin{array}{ccl}
 M_r(\C)&=&\bigoplus_{\gamma\in \Gamma}\left(M_r(\C)\right)_{\gamma},\ \ \text{ where }\\
 \left(M_r(\C)\right)_{\gamma}&=&\text{span}_{\C}\left\{E_{i,j}|\ \  \gamma=\gamma_i\cdot\gamma _j^{-1}\right\}
 \end{array}$$
admits $V$ as a graded left module, as well as $x$ as its character.

More generally, let $$\mathcal{A}^r=\bigoplus_{i=1}^r\mathcal{A}_i=\bigoplus_{i=1}^re_i\cdot\mathcal{A}$$
be a free (right) $\mathcal{A}$-module of rank $r$ with basis $e_1,\cdots,e_r$.
As customary, the rule
\begin{equation}\label{Etens}
E_{i,j}\otimes_{\C} a:\ \ \ e_l\cdot a'\mapsto\left\{
\begin{array}{rl} e_i\cdot a\cdot a',& \text{ if } l=j \\
 0,& \text{   if   }l\neq j
\end{array}\right.,\ \ a,a'\in\mathcal{A}
\end{equation}
furnishes $\mathcal{A}^r$ with a left $M_r(\C)\otimes_{\C}\mathcal{A}$-module structure. Clearly, right multiplication in the algebra $\mathcal{A}$ intertwines with the left action \eqref{Etens}.
Suppose now that $\mathcal{A}$ possesses a \eqref{eq:algebragrading}, and let \eqref{tuple} be an $r$-tuple. Define the following two $\Gamma$-gradings.
\begin{enumerate}
\item Grade $\mathcal{A}^r$ by letting $e_j\cdot\mathcal{A}_{\gamma}$ be homogeneous of degree $\gamma_j\cdot\gamma$.
\item Grade $M_{r}(\mathcal{A}) \cong M_r(\C)\otimes_{\C} \mathcal{A}$ by letting $E_{i,j}\otimes \mathcal{A}_{\gamma}$ be homogeneous of degree $\gamma_i\cdot\gamma\cdot\gamma _j^{-1}$.
\end{enumerate}
Then these two gradings are compatible, that is $\mathcal{A}^r$ is a $\Gamma$-graded left module over $M_r(\mathcal{A})\cong M_r(\C)\otimes_{\C} \mathcal{A}$ under the action \eqref{Etens}.
The $\Gamma$-grading (2) of $M_{r}(\mathcal{A})$ is \textit{induced} from the grading \eqref{eq:algebragrading} of $\mathcal{A}$ by $\sum _{i=1}^r\gamma_i\in\N[\Gamma]$, and is denoted by
$\sum _{i=1}^r\gamma_i(\mathcal{G_A}).$

It can then be verified that for every $\gamma_0\in\Gamma$
 \begin{equation}\label{eq:hominduce}
 \dim_{\C}\left(\sum _{\gamma\in\Gamma}n_{\gamma}\gamma(\mathcal{G_A})\right)_{\gamma_0}=
\sum_{\gamma_1\cdot\gamma_2\cdot\gamma_3^{-1}= {\gamma_0}} n_{\gamma_1}\cdot\dim_{\C}(\mathcal{A}_{\gamma_2})\cdot n_{\gamma_3} . \end{equation}
In particular, the support (see \eqref{supp}) of the
induced $\Gamma$-grading class $\left[\sum _{\gamma\in\Gamma}n_{\gamma}\gamma(\mathcal{G_A})\right]$ is given by
(see \cite[\S 4]{MR1941224})
\begin{equation}\label{eq:suppinduce}
\text{supp}_{\Gamma}
\left(\sum _{\gamma\in\Gamma}n_{\gamma}\gamma(\mathcal{G_A})\right)=\left\{\gamma_1\cdot\gamma_2\cdot\gamma_3^{-1}|\ \gamma_2\in\text{supp}_{\Gamma}(\mathcal{G_A}),\ \ n_{\gamma_1},n_{\gamma_3}>0\right\}.
\end{equation}
Since $x(\mathcal{G_A})$ grades the algebra $M_r(\mathcal{A})$, where $x\in \N[\Gamma]$ is of augmentation $\epsilon(x)=r$, then
\begin{equation}\label{augdim}
\dim_{\C}\left(x(\mathcal{G_A})\right)=\epsilon(x)^2\cdot\dim_\C(\mathcal{A}).
\end{equation}
Graded induction respects the product in the semiring $\N[\Gamma]$ as follows
$$\left[x_1(x_2(\mathcal{G}_{\mathcal{A}}))\right]=\left[x_1\cdot x_2(\mathcal{G}_{\mathcal{A}})\right],\ \  \forall x_1,x_2\in \N[\Gamma].$$

For later use we recall that given a subgroup $G < \Gamma$, the {\it left $G$-coset} (see \cite[Definition 2.1]{GS16}) of an element $x=\sum_{i=1}^r \gamma_i\in \N [\Gamma]$
is the set of elements
\begin{equation}\label{coset}
xG:=\left\{\sum_{i=1}^r \gamma_ig_i\right\}_{g_i\in G}.
\end{equation}
Two elements $x,x^{\shortmid}\in \N [\Gamma]$ are left $G$-equivalent
if they belong to the same left $G$-coset of $\N [\Gamma]$, that is
if $xG=x^{\shortmid} G$. The corresponding quotient set is denoted
by $\bigslant{\N [\Gamma]}{R_G}$. We remark that when $G\lhd \Gamma$ is normal,
then the quotient set admits a semiring structure that can be identified with the group semiring
$\N[\Gamma/G]$.

\subsection{Gradings of semisimple algebra and their quotients}\label{gsa}
\begin{definition}\label{def:simplygraded}
A {\it graded ideal} of a group-graded algebra~\eqref{eq:algebragrading} is a graded submodule of the two-sided free module $\mathcal{A}_{}.$
In other words, it is a two-sided ideal $I$ of $\mathcal{A}_{}$ satisfying
$$I=\bigoplus _{\gamma\in \Gamma} I\cap \mathcal{A}_{\gamma}.$$
A graded algebra is {\it graded-simple} if it admits no
non-trivial graded ideals.
\end{definition}
The following is an important observation.
\begin{theorem}\label{AW}(see e.g. \cite[Theorem 2.3']{CM}, \cite[\S A.$1$]{NVO82})
    Any group-grading of a semisimple finite-dimensional algebra admits a decomposition as a direct sum of graded-simple algebras.
\end{theorem}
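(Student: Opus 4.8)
The plan is to reduce the whole statement to a single observation: a central idempotent that generates a \emph{graded} two-sided ideal of a semisimple algebra automatically lies in the base algebra $\mathcal{A}_e$. Once this is in hand, the desired decomposition follows by an unremarkable induction on $\dim_\C\mathcal{A}$.

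First I would record the elementary fact that in any unital $\Gamma$-graded algebra the identity is homogeneous of trivial degree, i.e. $1\in\mathcal{A}_e$. Writing $1=\sum_{\gamma}1_\gamma$ with $1_\gamma\in\mathcal{A}_\gamma$ and comparing homogeneous components on both sides of $a=1\cdot a$ and $a=a\cdot 1$ for $a$ ranging over homogeneous elements shows that $1_e$ acts as a two-sided unit, hence $1_e=1$. Next, let $I\lhd\mathcal{A}$ be a graded two-sided ideal, $\mathcal{A}$ semisimple and finite-dimensional. As a two-sided ideal of a semisimple algebra, $I$ is itself unital: $I=\mathcal{A}e_I$ for a unique central idempotent $e_I\in\mathcal{A}$, and $\mathcal{A}=I\oplus\mathcal{A}(1-e_I)$ is a direct sum of two-sided ideals. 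Since $I=\bigoplus_{\gamma}(I\cap\mathcal{A}_\gamma)$ is a graded algebra in its own right, whose identity is $e_I$, applying the previous paragraph to $I$ gives $e_I\in I\cap\mathcal{A}_e\subseteq\mathcal{A}_e$. Consequently $1-e_I\in\mathcal{A}_e$ as well, multiplication by either idempotent preserves every homogeneous component (one has $\mathcal{A}_\gamma e_I\subseteq\mathcal{A}_\gamma$ and likewise for $1-e_I$), so $I=\bigoplus_{\gamma}\mathcal{A}_\gamma e_I$ and $\mathcal{A}(1-e_I)=\bigoplus_{\gamma}\mathcal{A}_\gamma(1-e_I)$ are both graded ideals, and $\mathcal{A}=I\oplus\mathcal{A}(1-e_I)$ is a decomposition into graded ideals.

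Now run the induction. Pick a nonzero graded ideal $I_1\lhd\mathcal{A}$ of minimal dimension; by the step above, $\mathcal{A}=I_1\oplus I_1'$ with $I_1'$ a graded ideal. Any nonzero graded ideal $J$ of $I_1$ is, because $e_{I_1}$ is central in $\mathcal{A}$, again a graded ideal of $\mathcal{A}$ contained in $I_1$ (from $J\subseteq I_1=\mathcal{A}e_{I_1}$ one gets $\mathcal{A}J=(\mathcal{A}e_{I_1})J\subseteq J$, and symmetrically on the right), so minimality of $\dim_\C I_1$ forces $J=I_1$; thus $I_1$ is graded-simple. The complement $I_1'$ is a graded, finite-dimensional, semisimple algebra (a two-sided ideal of a semisimple algebra is semisimple) of strictly smaller dimension, so by the inductive hypothesis it decomposes as a direct sum of graded-simple ideals; adjoining $I_1$ yields the required decomposition of $\mathcal{A}$.

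The only point demanding care — and the heart of the matter — is the claim that $e_I\in\mathcal{A}_e$; everything else is bookkeeping. It is worth stressing where semisimplicity enters: it is used precisely once, to guarantee that a graded two-sided ideal is generated by a central idempotent (equivalently, is a direct summand as a two-sided ideal), and the grading argument then forces that idempotent to be homogeneous of degree $e$.
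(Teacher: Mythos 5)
Your proof is correct, and since the paper states Theorem~\ref{AW} without proof (it is cited to Cohen--Montgomery and N\u{a}st\u{a}sescu--Van Oystaeyen), there is nothing in the text to compare it against line by line. Your argument is a clean, self-contained reconstruction of the standard one underlying those references: the whole theorem does indeed hinge on the observation that the central idempotent $e_I$ generating a graded two-sided ideal $I$ is itself the identity of the graded subalgebra $I=\bigoplus_\gamma(I\cap\mathcal{A}_\gamma)$, and hence lies in $I\cap\mathcal{A}_e$, after which induction on dimension (via a minimal nonzero graded ideal, which is forced to be graded-simple) finishes the job. Your remark isolating exactly where semisimplicity enters --- only to furnish the central idempotent --- is accurate and worth keeping.
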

TGA gradings, as well as gradings which are induced from such gradings, are natural
examples of $G$-simple gradings. The converse is also true due to
\begin{theorem}\label{BSZ}
\cite[Theorem 5.1]{MR1941224},\cite[Theorem 3]{MR2488221}
Any $\Gamma$-simple grading class of a finite-dimensional complex algebra is of the form $[x(\C^{\alpha}G)]$, i.e. it is induced by some $x\in\N[\Gamma]$
from a TGA grading $\C^{\alpha}G$ for some subgroup $G<\Gamma$ and $\alpha \in Z^2(G,\C ^*)$.
\end{theorem}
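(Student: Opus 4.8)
Since the statement is classical, I would prove it via the graded version of the Wedderburn--Artin theorem, following \cite[Theorem 5.1]{MR1941224} and \cite[Theorem 3]{MR2488221}. First I would pick a minimal graded left ideal $L\subseteq\mathcal{A}$; this exists by finite-dimensionality, and because $\mathcal{A}$ is unital and graded-simple, $L$ is a \emph{faithful} graded-simple left $\mathcal{A}$-module (its annihilator is a proper graded ideal, hence zero).

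Next I would form $D:=\End_{\mathcal{A}}(L)$ and grade it by declaring $\varphi$ to have degree $\gamma$ when $\varphi(L_{\gamma'})\subseteq L_{\gamma\gamma'}$ for all $\gamma'$. A graded Schur lemma --- a nonzero homogeneous $\mathcal{A}$-endomorphism of the graded-simple module $L$ is injective and surjective componentwise, hence invertible with homogeneous inverse --- shows that $D$ is a \emph{graded division algebra}. Carrying Rieffel's density argument out in the graded category and using $\dim_\C\mathcal{A}<\infty$ then yields a $\Gamma$-graded isomorphism $\mathcal{A}\cong\End_D(L)$, with $L$ viewed as a graded $D$-module. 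I expect this step to be the main obstacle: one must verify that minimality of graded ideals, the graded Schur lemma, and graded density all go through (the grading group $\Gamma$ may be infinite, although every support occurring here is finite), and it is precisely here that the cited references do the real work.

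It then remains to identify $D$ and $L$. The identity component $D_e$ is a finite-dimensional division algebra over $\C$, hence $D_e=\C$; for $g$ in the support of $D$ any two nonzero elements of $D_g$ differ by a scalar, so $\dim_\C D_g=1$; and since homogeneous units of $D$ are closed under products and inverses, $G:=\mathrm{supp}_\Gamma(D)$ is a finite subgroup of $\Gamma$. Picking a homogeneous basis $\{u_g\}_{g\in G}$ of $D$ and writing $u_{g_1}u_{g_2}=\alpha(g_1,g_2)u_{g_1g_2}$, associativity of $D$ forces $\alpha\in Z^2(G,\C^*)$, so $D\cong\C^{\alpha}G$ with its twisted grading. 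On the other hand, a graded module over a graded division algebra has a homogeneous basis (the usual argument for vector spaces applies verbatim), so $L\cong\bigoplus_{i=1}^r D[\gamma_i]$ as graded $D$-modules for suitable shifts $\gamma_1,\dots,\gamma_r\in\Gamma$ (here $D[\gamma_i]$ denotes $D$ with grading shifted by $\gamma_i$, and $r=\dim_\C L/|G|$).

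Finally I would assemble the pieces: $\mathcal{A}\cong\End_D(L)\cong M_r(D)=M_r(\C^{\alpha}G)$, and a direct comparison of degrees with the description of induced gradings in \eqref{Eij}--\eqref{Etens} shows that the $\Gamma$-grading arising this way is exactly the grading of $M_r(\C^{\alpha}G)$ induced from the twisted grading of $\C^{\alpha}G$ by the element $x:=\sum_{i=1}^r\gamma_i\in\N[\Gamma]$. Thus $[\mathcal{G}_{\mathcal{A}}]=[x(\C^{\alpha}G)]$ for a subgroup $G<\Gamma$ and a cocycle $\alpha\in Z^2(G,\C^*)$, which is the assertion; in particular every graded-simple finite-dimensional complex algebra is semisimple.
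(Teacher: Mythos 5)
The paper does not reprove this theorem; it quotes it from Bahturin--Zaicev \cite{MR1941224} and Bahturin--Zaicev--Sehgal \cite{MR2488221}, and your argument is a faithful reconstruction of exactly the route those references take: minimal graded left ideal, graded Schur lemma giving a graded division algebra $D$, graded density/Wedderburn yielding $\mathcal{A}\cong\End_D(L)$, identification $D\cong\C^{\alpha}G$ from $D_e=\C$ and the support being a finite subgroup, and reading the induced grading off a homogeneous $D$-basis of $L$. The only point to watch in a careful write-up is a sidedness convention (if $\mathcal{A}$ acts on $L$ from the left, $D=\End_{\mathcal{A}}(L)$ acts on the right and is most naturally graded by $\varphi(L_{\gamma'})\subseteq L_{\gamma'\gamma}$), but this at most replaces $D$ by its opposite algebra, and $(\C^{\alpha}G)^{\mathrm{op}}\cong\C^{\alpha^{-1}}G$ is again a twisted group algebra, so the conclusion is unaffected.
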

The \textit{elementary} part $x\in\N[\Gamma]$ and the \textit{fine} part $\C^{\alpha}G$ of the simple grading in Theorem \ref{BSZ} are
determined up to a certain equivalence relation in \cite[Corollary 2.22]{EK13} (see also \cite[Theorem 2.20]{GS16}).

By Theorem \ref{AW} and Theorem \ref{BSZ}, any group grading of a complex
semisimple algebra is obtained from TGA gradings by direct
sums and inductions.
More explicitly, any grading class $[\mathcal{G_A}]$ of a finite-dimensional semisimple complex algebra $\mathcal{A}$ by a group $\Gamma$ is a direct sum of, say, $l$ simply-graded classes,
each of which are induced from TGA grading classes.
That is, for every $i=1,\cdots,l$ there exist
\begin{enumerate}
    \item a subgroup $G_i< \Gamma,$ supporting the fine part of the $i$-th summand,
    \item a 2-cocycle $\alpha_i\in Z^2(G_i,\C^*),$ and
    \item an element $x_i\in \N [\Gamma]$, responsible for the elementary part of the $i$-th summand,
\end{enumerate}
decomposing $[\mathcal{G_A}]$ as \eqref{generalsum}.

Let \eqref{generalsum} be a $\Gamma$-grading class of a finite-dimensional semisimple algebra $\mathcal{A}$.
Let $N\lhd \Gamma$ be any normal subgroup.
By Theorem \ref{AW}, the corresponding quotient class $[\bigslant{\mathcal{G_A}}{N}]$ is a direct sum of quotients of its graded simple summands.
Furthermore, since induction commutes with quotient morphisms \cite[Lemma 2.14]{GS16}, then $[\bigslant{\mathcal{G_A}}{N}]$ admits the decomposition \eqref{reduct}.
Here $\overline{x_i}\in\N [\Gamma/N]$ are just $x_iN\in \bigslant{\N[\Gamma]}{R_N}$ (see \eqref{coset}).
%under the identification of $\N [\Gamma/N]$ with  natural extension of mod$(N)$ to $\N [\Gamma]$.
It should be remarked that the $l$ summands in \eqref{reduct} are no longer graded simple, and may themselves be decomposable into simply-graded constituents.

\subsection{Crossed products, the inertia group and the obstruction class}\label{CPSec}
A graded algebra \eqref{eq:algebragrading} is a \textit{crossed product} if it admits an invertible element $u_{\gamma}$ in every homogeneous component $\mathcal{A}_{\gamma}$.
A conventional notation   for a crossed product is
\begin{equation}\label{CP}
\mathcal{A}_{\Gamma}=\mathcal{A}_e*\Gamma=\oplus_{{\gamma}\in \Gamma}\mathcal{A}_eu_{\gamma}=\oplus_{{\gamma}\in \Gamma}u_{\gamma}\mathcal{A}_e.
\end{equation}
The invertible homogeneous element $u_{\gamma}\in\mathcal{A}_{\gamma}$ is not unique.
More precisely, all invertible $\gamma$-homogeneous elements are obtained as a product $x\cdot u_{\gamma}$ of $u_{\gamma}$ with all invertible elements $x\in\mathcal{A}_{e}^*$.

Twisted group algebras \eqref{tga} are examples of crossed products. As can easily be verified, quotient gradings of crossed products are also crossed products
(while non-trivial quotient gradings of TGA gradings are not TGA gradings).

Let \eqref{CP} be a crossed product and let $M$ be a left module over its base algebra $A_e$.
Then the underlying $\C$-space $M$ can be endowed with another $\mathcal{A}_e$-module structure $\star^{\gamma}$ for any $\gamma\in \Gamma$ as follows
\begin{equation}\label{star}
a\star^{\gamma} m:=u_{\gamma}^{-1}au_{\gamma}(m),\ \  \ \ a\in A_e, m\in M.
\end{equation}
Denoting this module by $^{\gamma}M$, the next two claims are easily checked.
\begin{lemma}\label{eqact}
    Let \eqref{CP} be a crossed product and let $M$ be a left $A_e$-module. Then with the notation \eqref{star},
\begin{enumerate}
\item a different choice of the invertible homogeneous element $u_{\gamma}$ in \eqref{star} yields another $\mathcal{A}_e$-module isomorphic to $^{\gamma}M$.
  \item  there is an $\mathcal{A}_e$-module isomorphism
    \begin{eqnarray}\label{actM}
    \begin{array}{ccc}
    ^{\gamma}M&\to &\mathcal{A}_{\gamma}{\otimes}_{\mathcal{A}_e}M\\
    m&\mapsto &u_{\gamma}{\otimes}_{\mathcal{A}_e}m
    \end{array},
    \end{eqnarray}
where the r.h.s. of \eqref{actM} possesses the natural left $\mathcal{A}_e$-module structure by multiplication.
    \end{enumerate}
\end{lemma}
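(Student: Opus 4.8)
The plan is to prove part (2) first and then obtain part (1) as a corollary, since the object $\mathcal{A}_{\gamma}\otimes_{\mathcal{A}_e}M$ on the right-hand side of \eqref{actM} is constructed without reference to any particular invertible homogeneous element of degree $\gamma$. Once \eqref{actM} is known to be an $\mathcal{A}_e$-isomorphism, the two modules ${}^{\gamma}M$ built from two different choices of $u_{\gamma}$ are each isomorphic to $\mathcal{A}_{\gamma}\otimes_{\mathcal{A}_e}M$, hence to one another, which is exactly part (1). Alternatively one can argue (1) directly: writing a second choice as $u_{\gamma}'=xu_{\gamma}$ with $x\in\mathcal{A}_e^{*}$, as recorded in the paragraph preceding the statement, the resulting $\mathcal{A}_e$-action differs from $\star^{\gamma}$ by the inner automorphism $a\mapsto x^{-1}ax$ of $\mathcal{A}_e$, and $m\mapsto x\star^{\gamma}m$ is then a $\C$-linear bijection intertwining the two actions.

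For part (2) I would fix an invertible $u_{\gamma}\in\mathcal{A}_{\gamma}$ — available because \eqref{CP} is a crossed product — and analyse the $\C$-linear map $\Phi\colon m\mapsto u_{\gamma}\otimes_{\mathcal{A}_e}m$ of \eqref{actM}. There are three points to check. (i) $\Phi$ respects the $\mathcal{A}_e$-actions: for $a\in\mathcal{A}_e$ the element $u_{\gamma}^{-1}au_{\gamma}$ is homogeneous of degree $\gamma e\gamma^{-1}=e$, hence lies in $\mathcal{A}_e$ and can be moved across the tensor sign, giving $\Phi(a\star^{\gamma}m)=u_{\gamma}\otimes(u_{\gamma}^{-1}au_{\gamma})m=(au_{\gamma})\otimes m=a\cdot\Phi(m)$. (ii) $\Phi$ is onto: in a crossed product $\mathcal{A}_{\gamma}=u_{\gamma}\mathcal{A}_e$, so an arbitrary generator $b\otimes m$ with $b=u_{\gamma}c$, $c\in\mathcal{A}_e$, equals $u_{\gamma}\otimes cm=\Phi(cm)$. (iii) $\Phi$ is one-to-one: exhibit the inverse $\Psi\colon b\otimes m\mapsto(u_{\gamma}^{-1}b)\,m$, where $u_{\gamma}^{-1}b\in\mathcal{A}_e$ (it has degree $e$) acts via the original $\mathcal{A}_e$-structure underlying ${}^{\gamma}M$; after checking that $\Psi$ is well-defined, i.e.\ $\mathcal{A}_e$-balanced, the relations $\Psi\Phi=\mathrm{id}$ and $\Phi\Psi=\mathrm{id}$ follow from $u_{\gamma}^{-1}u_{\gamma}=1=u_{\gamma}u_{\gamma}^{-1}$.

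All of this is routine; the one spot deserving care, and the only place where the crossed-product hypothesis is genuinely used, is the well-definedness of $\Psi$ in step (iii): it is precisely the invertibility of $u_{\gamma}$, with $u_{\gamma}^{-1}\in\mathcal{A}_{\gamma^{-1}}$ and hence $u_{\gamma}^{-1}\mathcal{A}_{\gamma}\subseteq\mathcal{A}_e$, that makes $(b,m)\mapsto(u_{\gamma}^{-1}b)\,m$ land in $M$ and factor through the tensor product balanced over $\mathcal{A}_e$. I expect no real obstacle beyond keeping track of which $\mathcal{A}_e$-module structure ($\star^{\gamma}$ versus the original one on $M$) is in play at each step.
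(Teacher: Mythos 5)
Your argument is correct and complete. The paper itself offers no proof of Lemma \ref{eqact}, stating only that ``the next two claims are easily checked,'' so there is no alternative route in the source to compare against; the verification you spell out is the natural one. Deriving (1) from (2), because $\mathcal{A}_{\gamma}\otimes_{\mathcal{A}_e}M$ is manifestly independent of the choice of $u_{\gamma}$, is a clean way to organize it, and your direct argument for (1) via the inner automorphism $a\mapsto x^{-1}ax$ with $u_{\gamma}'=xu_{\gamma}$ also works. One small slip worth correcting: in step (i) the element $u_{\gamma}^{-1}au_{\gamma}$ has degree $\gamma^{-1}\cdot e\cdot\gamma$, not $\gamma e\gamma^{-1}$; both products equal $e$, so the conclusion that it lies in $\mathcal{A}_e$ is unaffected, but the order should match the factors. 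You are also right that the crossed-product hypothesis enters both in the existence of $u_{\gamma}$ (making $\star^{\gamma}$ and the surjectivity via $\mathcal{A}_{\gamma}=u_{\gamma}\mathcal{A}_e$ available) and in the containment $u_{\gamma}^{-1}\mathcal{A}_{\gamma}\subseteq\mathcal{A}_e$ that makes $\Psi$ well-defined and balanced.
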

Lemma \ref{eqact} enables us to identify the module structures of both sides of \eqref{actM}.
It is not hard to check that the crossed product structure \eqref{CP} gives rise to a well-defined left action
    \begin{eqnarray}\label{act}
    \begin{array}{ccc}
    \Gamma\times\text{Mod}(\mathcal{A}_e)&\to&\text{Mod}(\mathcal{A}_e)\\
    (\gamma,[M])&\mapsto & \gamma([M]):=[^{\gamma}M]=[\mathcal{A}_{\gamma}{\otimes}_{\mathcal{A}_e}M]
    \end{array}
    \end{eqnarray}
of the grading group $\Gamma$ on the isomorphism types Mod$(\mathcal{A}_e)$ of $\mathcal{A}_e$-modules.
In turn, the subset Irr$(\mathcal{A}_e)\subseteq\text{Mod}(\mathcal{A}_e)$ of isomorphism types of simple $\mathcal{A}_e$-modules is stable under \eqref{act},
determining an action  of $\Gamma$ on Irr$(\mathcal{A}_e)$.
The fact that $M$ and $^{\gamma}M$ share the same underlying $\C$-space ascertains that
\begin{lemma}\label{samedim}
A $\Gamma$-orbit of $\text{Mod}(\mathcal{A}_e)$ under the action \eqref{act}
well-determines a dimension of its members,
that is dim$_{\C}(^{\gamma}M)=$dim$_{\C}(M)$ for every ${\gamma}\in\Gamma$.
\end{lemma}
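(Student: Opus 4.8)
The plan is to reduce the statement to the second part of Lemma \ref{eqact}, which already identifies $^{\gamma}M$ with $\mathcal{A}_{\gamma}\otimes_{\mathcal{A}_e}M$ as an $\mathcal{A}_e$-module. Since the assertion is about $\C$-dimensions only, it suffices to produce a $\C$-linear isomorphism between the underlying vector spaces of $M$ and $^{\gamma}M$, and in fact the definition \eqref{star} of the twisted action $\star^{\gamma}$ hands this to us for free: by construction the module $^{\gamma}M$ is built on the very same underlying set and $\C$-vector-space structure as $M$, only the action of $\mathcal{A}_e$ has been conjugated by the invertible homogeneous element $u_{\gamma}$. So the identity map $\mathrm{id}_M$ is already a $\C$-linear bijection $M\to {}^{\gamma}M$, whence $\dim_{\C}({}^{\gamma}M)=\dim_{\C}(M)$.

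First I would spell out that the formula $a\star^{\gamma}m:=u_{\gamma}^{-1}au_{\gamma}(m)$ in \eqref{star} does not touch the additive or scalar structure of $M$: for fixed $\gamma$, the map $a\mapsto u_{\gamma}^{-1}au_{\gamma}$ is an algebra automorphism of $\mathcal{A}_e$ (here one uses that $u_{\gamma}\in\mathcal{A}_{\gamma}$ is invertible and that $u_{\gamma}^{-1}\mathcal{A}_eu_{\gamma}\subseteq\mathcal{A}_{\gamma^{-1}}\mathcal{A}_e\mathcal{A}_{\gamma}\subseteq\mathcal{A}_e$ by the grading axiom, so $\star^{\gamma}$ genuinely defines an $\mathcal{A}_e$-module structure — this is the ``easily checked'' part preceding Lemma \ref{eqact}), and twisting a module by an algebra automorphism leaves the underlying $\C$-space untouched. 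Then I would note that any $\C$-vector-space isomorphism preserves dimension, and conclude.

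Finally, I would remark — to make the invocation clean for later use — that by Lemma \ref{eqact}(1) the isomorphism type of $^{\gamma}M$ is independent of the choice of $u_{\gamma}$, so the common dimension of the members of a $\Gamma$-orbit under \eqref{act} is genuinely well-defined; this is precisely the phrasing of the lemma. Honestly, there is no real obstacle here: the only thing one must be careful about is not to confuse ``same underlying $\C$-space'' (which is what \eqref{star} literally gives, and what makes the dimension claim immediate) with ``isomorphic as $\mathcal{A}_e$-modules'' (which is generally false — that is the whole point of the $\Gamma$-action being nontrivial). The proof is therefore a one-line consequence of reading the definition \eqref{star} correctly, and I would keep it to two or three sentences.
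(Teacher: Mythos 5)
Your proposal is correct and coincides with the paper's (very terse) argument: the text justifies Lemma~\ref{samedim} with the single observation that $M$ and $^{\gamma}M$ share the same underlying $\C$-space, which is exactly your reading of the definition~\eqref{star}. The extra observations you add (that conjugation by $u_{\gamma}$ is an algebra automorphism of $\mathcal{A}_e$, and that Lemma~\ref{eqact}(1) makes the isomorphism type of $^{\gamma}M$ independent of the choice of $u_{\gamma}$) are harmless elaborations of facts the paper already records just before the lemma.
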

It should be remarked that unlike our crossed product case, the dimensions of $M$ and $\mathcal{A}_{\gamma}{\otimes}_{\mathcal{A}_e}M$ may be unequal even when the algebra
\eqref{eq:algebragrading} is \textit{strongly graded}.
    \begin{definition}\label{basealginertdef}\cite[\S 3.1]{YG21}
        Let \eqref{CP} be a crossed product and let $M$ be a left $\mathcal{A}_e$-module.
        The {\it inertia group} of $M$ with respect to the grading \eqref{CP} is the stabilizer of $[M]$ under the action \eqref{act}, that is
        \begin{equation}\label{inertiasuspension}
\mathcal{I}_{\mathcal{G}}(M):=
\{\gamma\in \Gamma |\ [M]=[\gamma(M)]\}=
\{\gamma\in \Gamma |\  M \cong ^{\gamma}M{\cong}{\mathcal{A}_{\gamma}}{\otimes}_{\mathcal{A}_e}M\}.
        \end{equation}
        In particular, $[M]$ is {\it invariant} with respect to \eqref{CP} if $\mathcal{I}_{\mathcal{G}}(M)=\Gamma$.
    \end{definition}
Let \eqref{CP} be a complex crossed product, and let $M$ be a left $\mathcal{A}_e$-module.
Then ${\mathcal{A}_{e}*\Gamma}{\otimes}_{\mathcal{A}_e}M$ admits a left $\Gamma$-graded $\mathcal{A}_{e}*\Gamma$-module structure.
The main player in the next theorem is the $\Gamma$-graded subalgebra
$$\text{End}^{\text{r}(\Gamma)}_{\mathcal{A}_{e}*\Gamma}({\mathcal{A}_{e}*\Gamma}{\otimes}_{\mathcal{A}_e}M)\subseteq
\text{End}^{\text{r}(\Gamma)}_{\C}({\mathcal{A}_{e}*\Gamma}{\otimes}_{\mathcal{A}_e}M)$$ of right graded endomorphisms of $\mathcal{A}_{e}*\Gamma{\otimes}_{\mathcal{A}_e}M$
(see \eqref{grend} and \eqref{intwin}), which intertwine with its left $\mathcal{A}_{e}*\Gamma$-action.
As noted in Lemma \ref{eqact}(2), the conjugation action, and thus also the corresponding stabilizers,
coincides with the tensor action by homogeneous components for more general gradings, originally used for the following results.
    \begin{theorem}\label{let}(see \cite[Theorem 5.23]{YG21})
Let \eqref{CP} be a complex crossed product, and let $[M]\in$Irr$(\mathcal{A}_e)$. Suppose further that the dimension of $M$ (over $\C$) is enumerable, or more generally is \textit{absolutely irreducible}
(see \cite[\S 2.2]{YG21}).
        Then there exists a unique cohomology class $\omega_{\mathcal{G}}([M])\in H^2(\mathcal{I}_{\mathcal{G}}(M),\C^*)$ of the inertia $\mathcal{I}_{\mathcal{G}}(M)<\Gamma$ such that
    $$\text{End}^{\text{r}(\Gamma)}_{\mathcal{A}_{e}*\Gamma}({\mathcal{A}_{e}*\Gamma}{\otimes}_{\mathcal{A}_e}M)\stackrel{\Gamma}{\cong} \C^{\omega}\mathcal{I}_{\mathcal{G}}(M)$$
    for any 2-cocycle $\omega$ in the class $\omega_{\mathcal{G}}([M])$.
    \end{theorem}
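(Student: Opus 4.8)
The plan is to work directly with the graded $\mathcal{A}_e*\Gamma$-module $W:=\mathcal{A}_e*\Gamma\otimes_{\mathcal{A}_e}M$ and to compute its algebra of intertwining right-graded endomorphisms homogeneous component by homogeneous component. Write $B:=\mathcal{A}_e$, $\mathcal{A}:=\mathcal{A}_e*\Gamma$, $\mathcal{I}:=\mathcal{I}_{\mathcal{G}}(M)$ and $D:=\text{End}^{\text{r}(\Gamma)}_{\mathcal{A}}(W)$. From \eqref{CP} the $\Gamma$-grading of $W$ is $W=\bigoplus_{\gamma}W_{\gamma}$ with $W_{\gamma}=u_{\gamma}\otimes_{B}M$, and by Lemma~\ref{eqact}(2) the map $m\mapsto u_{\gamma}\otimes_{B}m$ identifies $W_{\gamma}$ with ${}^{\gamma}M$ as a left $B$-module (in particular $\dim_{\C}W_{\gamma}=\dim_{\C}M$, which is exactly where invertibility of the $u_\gamma$ is used); moreover $W_e\cong M$ is cyclic and $W=\mathcal{A}\cdot W_e$, with $W_{\gamma}=\mathcal{A}_{\gamma}\cdot W_e$.

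First I would pin down each homogeneous component $D_{\gamma}=\text{End}^{\text{r}(\gamma)}_{\mathcal{A}}(W)$. Since $W\cong\mathcal{A}\otimes_{B}W_e$ as a left $\mathcal{A}$-module, the universal property of $\mathcal{A}\otimes_{B}-$ says an $\mathcal{A}$-endomorphism of $W$ is the same as a $B$-homomorphism $W_e\to W$; using that $W_e$ is cyclic and $\mathcal{A}_e W_\gamma\subseteq W_\gamma$, such a map decomposes as a finite sum of its $W_\gamma$-components, so $\text{End}_{\mathcal{A}}(W)=\bigoplus_{\gamma}\text{Hom}_{B}(W_e,W_{\gamma})$; and $\varphi$ with $\varphi(W_e)\subseteq W_{\gamma}$ is automatically right-homogeneous of degree $\gamma$ because $\varphi(W_{\gamma'})=\mathcal{A}_{\gamma'}\varphi(W_e)\subseteq\mathcal{A}_{\gamma'}W_{\gamma}=W_{\gamma'\gamma}$. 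Hence $D_{\gamma}\cong\text{Hom}_{B}(M,{}^{\gamma}M)$. Now I invoke absolute irreducibility of $M$: ${}^{\gamma}M$ is again absolutely irreducible (its $B$-action is $M$'s action twisted by the automorphism $b\mapsto u_{\gamma}^{-1}bu_{\gamma}$), so Schur's lemma gives $\text{Hom}_{B}(M,{}^{\gamma}M)=0$ unless $M\cong{}^{\gamma}M$, i.e. unless $\gamma\in\mathcal{I}$ (by \eqref{star}, \eqref{act} and Definition~\ref{basealginertdef}), in which case it is one-dimensional. Furthermore a nonzero $\varphi\in D_{\gamma}$ with $\gamma\in\mathcal{I}$ restricts on each $W_{\gamma'}=u_{\gamma'}W_e$ to the composite of the $\C$-linear bijection $u_{\gamma'}\colon W_e\to W_{\gamma'}$ with $\varphi|_{W_e}$, the latter being a nonzero map of simple $B$-modules hence an isomorphism; so $\varphi$ is bijective, i.e. invertible in $D$.

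It follows that $D=\bigoplus_{\gamma\in\mathcal{I}}D_{\gamma}$ is an $\mathcal{I}$-graded (hence $\Gamma$-graded) algebra all of whose homogeneous components are one-dimensional and contain an invertible element, so it is a twisted group algebra: choosing $0\neq v_{\gamma}\in D_{\gamma}$ with $v_e=\mathrm{id}_{W}$, the product $v_{\gamma_1}v_{\gamma_2}$ lies in the one-dimensional invertible component $D_{\gamma_1\gamma_2}$, whence $v_{\gamma_1}v_{\gamma_2}=\omega(\gamma_1,\gamma_2)\,v_{\gamma_1\gamma_2}$ with $\omega(\gamma_1,\gamma_2)\in\C^{*}$, and associativity of $D$ forces $\omega\in Z^{2}(\mathcal{I},\C^{*})$; thus $D\stackrel{\Gamma}{\cong}\C^{\omega}\mathcal{I}$. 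For the uniqueness claim I would observe that any other choice of homogeneous basis has the form $v_{\gamma}'=c_{\gamma}v_{\gamma}$ with $c_{\gamma}\in\C^{*}$, $c_e=1$, which replaces $\omega$ by $\omega\cdot\partial(c)$, while $W$ and $D$ themselves depend only on the grading class $[\mathcal{G}]$ and the isomorphism type $[M]$; hence $\omega_{\mathcal{G}}([M]):=[\omega]\in H^{2}(\mathcal{I},\C^{*})$ is well defined and is the unique class with the stated property.

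The step I expect to be the crux is the second one, and it is precisely there that the crossed-product hypothesis is essential: the invertibility of the $u_{\gamma}$ forces $W_{\gamma}\cong{}^{\gamma}M$ with $\dim_{\C}W_{\gamma}=\dim_{\C}M$, which is what pins $\dim_{\C}D_{\gamma}$ to $1$ on the inertia (for merely strongly graded algebras this already fails, as recorded in the remark after Lemma~\ref{samedim}). A secondary delicate point is the appeal to Schur's lemma, i.e. $\text{End}_{B}(M)=\C$, which is exactly why the enumerability / absolute-irreducibility assumption cannot be dropped; one should also note that passing to $\text{End}^{\text{r}(\Gamma)}$ (rather than the full endomorphism algebra) is harmless here since, in the finite-dimensional setting, the two coincide (cf. \S\ref{gsaend}).
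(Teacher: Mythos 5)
The paper does not prove this theorem itself; it only cites \cite[Theorem 5.23]{YG21}, where the result is established in a broader setting, so there is no internal proof to compare against. Your self-contained argument is, as far as I can tell, correct, and it is the standard Clifford--Mackey computation: identify $\text{End}_{\mathcal{A}}(\mathcal{A}\otimes_{B}M)$ with $\text{Hom}_{B}(M,\mathcal{A}\otimes_{B}M)$ via tensor--hom adjunction, decompose the restriction of $\mathcal{A}\otimes_{B}M$ along the coset pieces $W_{\gamma}\cong{}^{\gamma}M$, and use Schur's lemma to pin each $D_{\gamma}$ to dimension $0$ (off the inertia) or $1$ (on it), then observe that nonzero homogeneous elements on the inertia are invertible. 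Your identification of where the hypotheses bite is accurate: invertibility of the $u_{\gamma}$ is what gives $W_{\gamma}\cong{}^{\gamma}M$ with the same dimension (and what lets $\varphi|_{W_{\gamma'}}$ be read off from $\varphi|_{W_{e}}$), and absolute irreducibility is exactly $\text{End}_{B}(M)=\C$. Two small points you should make explicit in a careful write-up: (i) the splitting $\text{Hom}_{B}(M,\bigoplus_{\gamma}W_{\gamma})=\bigoplus_{\gamma}\text{Hom}_{B}(M,W_{\gamma})$ is not automatic when $\Gamma$ is infinite, and the justification is finite generation (cyclicity) of $M$ as a $B$-module, which is exactly what you invoke but should be stated as a property of $M$ rather than of $W_{e}$; (ii) the algebra $\text{End}^{\text{r}(\Gamma)}_{\C}$ of \eqref{grend} is graded for the opposite of composition (only then is the right-degree function multiplicative), so the $2$-cocycle you extract from $v_{\gamma_1}v_{\gamma_2}=\omega(\gamma_1,\gamma_2)v_{\gamma_1\gamma_2}$ is taken in that product; this is a matter of convention and does not affect the conclusion that $D$ is a twisted group algebra of $\mathcal{I}_{\mathcal{G}}(M)$ nor the well-definedness of $[\omega]$.
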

The cohomology class $\omega_{\mathcal{G}}([M])\in H^2(\mathcal{I}_{\mathcal{G}}(M),\C^*)$ is termed {\it Mackey's obstruction cohomology class} (which has a much broader notion, see \cite{YG21}).
Owing to Theorem \ref{let}, the twisted group algebra
$\C^{\omega_{}}\mathcal{I}_{\mathcal{G}}(M)$ can be regarded as acting on ${\mathcal{A}_{e}*\Gamma}{\otimes}_{\mathcal{A}_e}M$ from the right (intertwining with the $\mathcal{A}_{e}*\Gamma$-action).
Since $\C^{\omega_{}}\mathcal{I}_{\mathcal{G}}(M)$ is a graded division algebra, that is all its non-zero homogeneous elements are invertible,
its graded module ${\mathcal{A}_{e}*\Gamma}{\otimes}_{\mathcal{A}_e}M$ is free and thus admits a well-defined dimension (see \cite[\S 2.1]{EK13}).
As above, consider the $\Gamma$-graded subalgebra
$$\text{End}^{\text{l}(\Gamma)}_{\C^{\omega_{}}\mathcal{I}_{\mathcal{G}}(M)}({\mathcal{A}_{e}*\Gamma}{\otimes}_{\mathcal{A}_e}M)\subset
\text{End}^{\text{l}(\Gamma)}_{\C}({\mathcal{A}_{e}*\Gamma}{\otimes}_{\mathcal{A}_e}M)$$
of left graded endomorphisms of $\mathcal{A}_{e}*\Gamma{\otimes}_{\mathcal{A}_e}M$ (see \eqref{grend}), which intertwine with its right $\C^{\omega_{}}\mathcal{I}_{\mathcal{G}}(M)$-action.
       \begin{lemma}\label{isogr}(see \cite[Theorem I.5.8]{NVO82} and \cite[Corollary D]{YG21})
With the above notation, the algebra
$\text{End}^{\text{l}(\Gamma)}_{\C^{\omega_{}}\mathcal{I}_{\mathcal{G}}(M)}({\mathcal{A}_{e}*\Gamma}{\otimes}_{\mathcal{A}_e}M)$ of endomorphisms is $\Gamma$-graded simple, such that
        \begin{equation}\label{Endgris}
\text{End}^{\text{l}(\Gamma)}_{\C^{\omega_{}}\mathcal{I}_{\mathcal{G}}(M)}(\mathcal{A}_{e}*\Gamma{\otimes}_{\mathcal{A}_e}M)\stackrel{{\Gamma}}{\cong}y_M(\C^{\omega_{}}\mathcal{I}_{\mathcal{G}}(M))
        \end{equation}
for some $y_M\in \N [\Gamma]$ (which is not uniquely determined, see \cite[Theorem 2.20]{GS16}) of augmentation
$\epsilon(y_M)=\dim_{\C^{\omega_{}}\mathcal{I}_{\mathcal{G}}(M)}({\mathcal{A}_{e}*\Gamma}{\otimes}_{\mathcal{A}_e}M)$.
    \end{lemma}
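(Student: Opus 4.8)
The plan is to realize this endomorphism algebra concretely as a matrix algebra over the graded division algebra $D:=\C^{\omega}\mathcal{I}_{\mathcal{G}}(M)$, equipped with the elementary grading read off from a homogeneous $D$-basis, and then to match that grading with the induced grading of \S\ref{inductionsec}. Write $W:=\mathcal{A}_{e}*\Gamma\otimes_{\mathcal{A}_e}M$. By Theorem~\ref{let}, $W$ is a $\Gamma$-graded right $D$-module whose right $D$-action intertwines with its left $\mathcal{A}_{e}*\Gamma$-action, and since $D$ is a graded division algebra, $W$ is graded-free over $D$ (this is already recorded just above, see \cite[\S 2.1]{EK13}). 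First I would fix a homogeneous $D$-basis $e_{1},\dots,e_{r}$ of $W$ with $\deg(e_{i})=\gamma_{i}\in\Gamma$ and put $y_{M}:=\sum_{i=1}^{r}\gamma_{i}\in\N[\Gamma]$; then by construction $\epsilon(y_{M})=r=\dim_{D}(W)$, which is the asserted augmentation.

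Next I would identify $\End^{\text{l}(\Gamma)}_{D}(W)$ — which, since $\text{supp}_{\Gamma}(W)$ is finite, agrees as a $\C$-algebra with the ungraded $\End_{D}(W)$ (\S\ref{gsaend}) — with $M_{r}(D)\cong M_{r}(\C)\otimes_{\C}D$ by sending an endomorphism $\varphi$ to the matrix $(d_{ij})$ determined by $\varphi(e_{j})=\sum_{i}e_{i}\,d_{ij}$; composition of endomorphisms becomes the usual matrix product over $D$, so this is an algebra isomorphism. It then remains to transport the left $\Gamma$-grading. A homogeneous $d\in D_{\delta}$ carries $e_{i}D$ into degree $\gamma_{i}\delta$, hence $\varphi$ is left-homogeneous of degree $\gamma$ precisely when $d_{ij}\in D_{\gamma_{i}^{-1}\gamma\gamma_{j}}$ for all $i,j$; comparing with \S\ref{inductionsec}, where $E_{i,j}\otimes D_{\delta}$ lies in degree $\gamma_{i}\cdot\delta\cdot\gamma_{j}^{-1}$, this is exactly the grading induced on $D=\C^{\omega}\mathcal{I}_{\mathcal{G}}(M)$ by $y_{M}$. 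Thus the above is a $\Gamma$-graded isomorphism onto $y_{M}(\C^{\omega}\mathcal{I}_{\mathcal{G}}(M))$, which is \eqref{Endgris}.

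Graded-simplicity then comes essentially for free, since an induction of a twisted grading is $\Gamma$-graded simple (as noted in the introduction and \S\ref{gsa}; see also \cite[Theorem~I.5.8]{NVO82}). One can also see it directly from the matrix picture: a nonzero graded ideal of $M_{r}(D)$ with the above grading contains a homogeneous element with a single nonzero entry, that entry is a homogeneous — hence invertible — element of $D$, and multiplying on both sides by the degree-$e$ idempotents $E_{k,k}$ and by homogeneous units of $D$ then generates all of $M_{r}(D)$.

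I expect the only real obstacle to be careful bookkeeping of the left-versus-right module conventions, so that the degree of $d_{ij}$ comes out as $\gamma_{i}^{-1}\gamma\gamma_{j}$ and matches the induction convention of \S\ref{inductionsec} — equivalently, keeping track of whether one lands in $M_{r}(D)$ or in $M_{r}(D^{\text{op}})$ — together with correctly invoking, from Theorem~\ref{let} and the graded-division property of $\C^{\omega}\mathcal{I}_{\mathcal{G}}(M)$, the graded-freeness of $W$ over $D$ that makes the homogeneous-basis argument legitimate. Everything else is routine verification.
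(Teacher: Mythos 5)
The paper does not give its own proof of this lemma: it is stated as a background result in \S\ref{CPSec} and referred to \cite[Theorem I.5.8]{NVO82} and \cite[Corollary D]{YG21}. Your proposal correctly fills in that proof, and the method --- choose a homogeneous $D$-basis of the graded-free right $D$-module $W=\mathcal{A}_e*\Gamma\otimes_{\mathcal{A}_e}M$, identify the graded endomorphism algebra with $M_r(D)$ under the grading coming from the degree tuple, and read $y_M$ off as the sum of those degrees --- is exactly the standard argument behind the cited sources. The degree bookkeeping is right: $\varphi$ of left degree $\gamma$ has entries $d_{ij}\in D_{\gamma_i^{-1}\gamma\gamma_j}$, and in the induced grading $E_{i,j}\otimes D_\delta$ sits in degree $\gamma_i\delta\gamma_j^{-1}$, so the two match on the nose; the right-module convention correctly lands you in $M_r(D)$ rather than $M_r(D^{\mathrm{op}})$; the graded-simplicity argument (invertibility of homogeneous elements plus the degree-$e$ idempotents $E_{k,k}$) is sound; and $\epsilon(y_M)=r=\dim_D(W)$ since any two homogeneous $D$-bases have the same cardinality.

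One small caveat worth flagging: the side remark that $\End^{\mathrm{l}(\Gamma)}_{D}(W)$ ``agrees with the ungraded $\End_D(W)$ since $\mathrm{supp}_\Gamma(W)$ is finite'' only holds when $\Gamma$ is finite, because $\mathrm{supp}_\Gamma(W)=\Gamma$ here. The lemma as the paper states it does not restrict $\Gamma$. In the general case you should simply drop this comparison with the ungraded endomorphism algebra and compute $\End^{\mathrm{l}(\Gamma)}_D(W)=\bigoplus_\gamma\End^{\mathrm{l}(\gamma)}_D(W)$ directly from the homogeneous basis --- which is precisely what your grading computation already does, so the rest of the argument goes through unchanged once the remark is removed.
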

Remaining with the above notation, for any $a\in \mathcal{A}_{e}*\Gamma$, let
    \begin{eqnarray}\label{varphix}
    \varphi_{a}:
    \begin{array}{rcr}{\mathcal{A}_{e}*\Gamma}{\otimes}_{\mathcal{A}_e}M&\to& {\mathcal{A}_{e}*\Gamma}{\otimes}_{\mathcal{A}_e}M\\
    u_{\gamma}{\otimes}_{\mathcal{A}_e}m&\mapsto&a\cdot u_{\gamma}{\otimes}_{\mathcal{A}_e}m
    \end{array}
    \end{eqnarray}
denote an action of $a$ on the left $\mathcal{A}_{e}*\Gamma$-module ${\mathcal{A}_{e}*\Gamma}{\otimes}_{\mathcal{A}_e}M$.
Then there is a $\Gamma$-graded homomorphism (see \cite[Lemma 3.23]{YG21})
    \begin{eqnarray}\label{varphixg}\varphi_M:
    \begin{array}{ccc}
    \mathcal{A}_{\Gamma}&\stackrel{\Gamma}{\to}&\text{End}^{\text{l}(\Gamma)}_{\C^{\omega_{}}\mathcal{I}_{\mathcal{G}}(M)}(\mathcal{A}_{e}*\Gamma{\otimes}_{\mathcal{A}_e}M)\\
    a&\mapsto&\varphi_{a}
    \end{array}.
    \end{eqnarray}
The following is an immediate consequence of Theorem \ref{let} and the Graded Density Theorem (see \cite[Theorem 2.5]{EK13}).
    \begin{theorem}\label{C}\cite[Theorem C]{YG21}
        Let \eqref{CP} be a finite-dimensional complex crossed product, and let $M$ be any simple left $\mathcal{A}_e$-module.
        Then the simply $\Gamma$-graded endomorphism algebra
        $$\text{End}^{\text{l}(\Gamma)}_{\C^{\omega_{}}\mathcal{I}_{\mathcal{G}}(M)}(\mathcal{A}_{e}*\Gamma{\otimes}_{\mathcal{A}_e}M)$$
        is a graded image of $\mathcal{A}_e*\Gamma$ via \eqref{varphixg} for any 2-cocycle $\omega_{}$ in the obstruction cohomology class
        $\omega_{\mathcal{G}}([M])\in H^2(\mathcal{I}_{\mathcal{G}}(M),\C^*)$ of the inertia $\mathcal{I}_{\mathcal{G}}(M)<\Gamma$.
    \end{theorem}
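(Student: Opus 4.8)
The plan is to prove a double-centralizer statement for an induced module. Write $\mathcal{I}:=\mathcal{I}_{\mathcal{G}}(M)$ and $[\omega]:=\omega_{\mathcal{G}}([M])\in H^2(\mathcal{I},\C^*)$, and put $W:=\mathcal{A}_{e}*\Gamma\otimes_{\mathcal{A}_e}M$. A simple finite-dimensional complex module is automatically absolutely irreducible, so Theorem~\ref{let} applies: it equips $W$ with the structure of a finite-dimensional $\Gamma$-graded $(\mathcal{A}_{e}*\Gamma,\C^{\omega}\mathcal{I})$-bimodule whose right $\C^{\omega}\mathcal{I}$-action intertwines the left one, and by \eqref{varphixg} the map $\varphi_M$ is the $\Gamma$-graded homomorphism sending $a$ to left multiplication by $a$ on $W$. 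In particular $\varphi_M(\mathcal{A}_{e}*\Gamma)$ is contained in the centralizer of the right $\C^{\omega}\mathcal{I}$-action on $W$, and the assertion is precisely that it fills this centralizer; so the plan is to run a double-centralizer argument for the $\mathcal{A}_{e}*\Gamma$-module $W$.

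First I would identify the two relevant endomorphism algebras. Since $W$ is finite-dimensional, its graded and ungraded endomorphism algebras coincide (see \S\ref{gsaend}); hence $\End_{\mathcal{A}_{e}*\Gamma}(W)=\End^{\text{r}(\Gamma)}_{\mathcal{A}_{e}*\Gamma}(W)$, which by Theorem~\ref{let} is $\C^{\omega}\mathcal{I}$ (one sees the same via the restriction--induction adjunction: $\End_{\mathcal{A}_{e}*\Gamma}(W)\cong\bigoplus_{\gamma}\operatorname{Hom}_{\mathcal{A}_e}(M,{}^{\gamma}M)$, which by absolute irreducibility of $M$ and Lemma~\ref{eqact} is supported precisely on $\mathcal{I}$). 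Dually, again because $W$ is finite-dimensional, $\End_{\C^{\omega}\mathcal{I}}(W)=\End^{\text{l}(\Gamma)}_{\C^{\omega}\mathcal{I}}(W)$, the graded-simple algebra of the form $y_M(\C^{\omega}\mathcal{I})$ provided by Lemma~\ref{isogr}.

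Next I would observe that $W$ is a semisimple $\mathcal{A}_{e}*\Gamma$-module: its restriction to $\mathcal{A}_e$ is $\bigoplus_{\gamma\in\Gamma}{}^{\gamma}M$ (Lemma~\ref{eqact}), a direct sum of simple $\mathcal{A}_e$-modules and hence $\mathcal{A}_e$-semisimple, while $\mathcal{A}_{e}*\Gamma$ is a separable extension of $\mathcal{A}_e$ (a separability idempotent is $\tfrac{1}{|\Gamma|}\sum_{\gamma}u_{\gamma}\otimes_{\mathcal{A}_e}u_{\gamma}^{-1}$), from which $\mathcal{A}_e$-semisimplicity of $\operatorname{Res}W$ forces $\mathcal{A}_{e}*\Gamma$-semisimplicity of $W$. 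The classical double-centralizer theorem for the finite-dimensional semisimple module $W$ then yields a $\C$-algebra isomorphism $\mathcal{A}_{e}*\Gamma/\operatorname{Ann}(W)\xrightarrow{\ \sim\ }\End_{\End_{\mathcal{A}_{e}*\Gamma}(W)}(W)=\End_{\C^{\omega}\mathcal{I}}(W)=\End^{\text{l}(\Gamma)}_{\C^{\omega}\mathcal{I}}(W)$, and by construction $\varphi_M$ is the quotient map $\mathcal{A}_{e}*\Gamma\to\mathcal{A}_{e}*\Gamma/\operatorname{Ann}(W)$ followed by this isomorphism. Therefore $\varphi_M$ is onto $\End^{\text{l}(\Gamma)}_{\C^{\omega}\mathcal{I}}(W)$, i.e.\ the latter is a graded image of $\mathcal{A}_{e}*\Gamma$ via \eqref{varphixg}.

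The main obstacle is the careful passage between graded and ungraded pictures: one must be sure that $\End_{\mathcal{A}_{e}*\Gamma}(W)$ has no inhomogeneous part, so that it genuinely equals $\C^{\omega}\mathcal{I}$ (and not merely contains it), and that the ungraded algebra delivered by the double-centralizer theorem is $\Gamma$-graded-isomorphic to the induced algebra $y_M(\C^{\omega}\mathcal{I})$ of Lemma~\ref{isogr} — both points rest on the finiteness of the supports of $W$ over $\mathcal{A}_{e}*\Gamma$ and over $\C^{\omega}\mathcal{I}$. A secondary point needing care is the separability argument behind semisimplicity of $W$; this can be bypassed — and any hypothesis on $\mathcal{A}_e$ itself avoided — by running a graded version of the double-centralizer theorem directly on the graded-semisimple module $W$.
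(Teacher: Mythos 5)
The paper does not reprove Theorem~\ref{C}; it simply imports it from \cite[Theorem~C]{YG21}, so there is no internal proof to compare against. On its own terms your argument is correct for the finite-dimensional case stated here, and it is the natural one: finite-dimensionality of $\mathcal{A}_e*\Gamma$ forces $\Gamma$ finite, so $\tfrac{1}{|\Gamma|}\sum_{\gamma}u_{\gamma}\otimes_{\mathcal{A}_e}u_{\gamma}^{-1}$ is a bona fide separability element and the relative Maschke theorem upgrades semisimplicity of $\operatorname{Res}_{\mathcal{A}_e}W=\bigoplus_{\gamma}{}^{\gamma}M$ to semisimplicity of $W$ over $\mathcal{A}_e*\Gamma$; the Jacobson density theorem then gives surjectivity of $\varphi_M$ onto $\End_{\End_{\mathcal{A}_e*\Gamma}(W)}(W)$, and the identification of both endomorphism algebras with their graded counterparts (using \S\ref{gsaend} and Theorem~\ref{let}) closes the loop. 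Be aware, though, that the source \cite{YG21} proves the statement in a setting where $\Gamma$ may be infinite and $M$ merely absolutely irreducible; there the shortcut ``graded endomorphisms $=$ ungraded endomorphisms'' fails, and one must run the argument entirely on the graded side (the graded density theorem of N\u{a}st\u{a}sescu--van Oystaeyen, cf.\ Lemma~\ref{isogr}). Your closing remark that one could ``run a graded version of the double-centralizer theorem directly'' is exactly the mechanism that makes the general case go through, and would also rescue your proof if someone weakened the finite-dimensionality hypothesis; as written, though, your separability step silently uses $|\Gamma|<\infty$, which is fine for the theorem as stated in this paper but is worth flagging explicitly.
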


\section{On equi-dimensional gradings}\label{equi}
\begin{definition}\label{equidef}
A $\Gamma$-graded algebra \eqref{eq:algebragrading} is
{\it equi-dimensional} if\\ dim$_{\C}(\mathcal{A}_{\gamma_1})=$dim$_{\C}(\mathcal{A}_{\gamma_2})$ for every
$\gamma_1,\gamma_2\in {\Gamma}$. A  $\Gamma$-grading class is equi-dimensional if so are its members.
\end{definition}
Crossed product grading classes, including TGA grading classes, are equi-dimensional. Furthermore, it is
easy to verify that the family of equi-dimensional grading classes is closed to quotients. Therefore, this property is helpful in the proof of Theorem A.
Here is a sufficient and necessary condition for a graded class induced from an equi-dimensional graded class to be equi-dimensional itself.
\begin{theorem}\label{th:equidim}
Let $\Gamma$ be a group, let $G<\Gamma$ be a subgroup of finite index and let $[\mathcal{G_A}]$ be an equi-dimensional $G$-graded class of an algebra $\mathcal{A}$.
Then the induced $\Gamma$-grading class
$\left[\sum_{\gamma\in \Gamma}n_{\gamma}\gamma(\mathcal{G_A})\right]$ is equi-dimensional if and only if
$$m_{\gamma_1 G}=m_{\gamma_2 G},\ \  \forall \gamma_1,\gamma_2\in \Gamma,$$
where $m_{\gamma G}:=\sum_{\gamma' \in \gamma G}n_{\gamma'}$ for the left coset $\gamma G\in[\Gamma:G]$.
\end{theorem}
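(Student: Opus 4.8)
The plan is to use the explicit dimension formula \eqref{eq:hominduce} for the homogeneous components of an induced grading and to reorganize the sum by left $G$-cosets, exploiting the equi-dimensionality of $[\mathcal{G}_{\mathcal{A}}]$ over $G$. Write $x=\sum_{\gamma\in\Gamma}n_\gamma\gamma\in\N[\Gamma]$ for the inducing element, let $d:=\dim_{\C}(\mathcal{A}_g)$ be the common dimension of every homogeneous component of $[\mathcal{G}_{\mathcal{A}}]$ (note $\mathrm{supp}_G(\mathcal{A})$ may be a proper subgroup of $G$, but since $\mathcal{A}_g=0$ forces $d=0$ in the trivial degenerate case, assume $d\geq 1$; if $\mathcal{A}=0$ the statement is vacuous), and let $S:=\mathrm{supp}_G(\mathcal{A})$. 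First I would fix $\gamma_0\in\Gamma$ and rewrite \eqref{eq:hominduce} as
$$\dim_{\C}\bigl(x(\mathcal{G}_{\mathcal{A}})\bigr)_{\gamma_0}=\sum_{\gamma_1,\gamma_3\in\Gamma}\ n_{\gamma_1}\,n_{\gamma_3}\,\dim_{\C}\bigl(\mathcal{A}_{\gamma_1^{-1}\gamma_0\gamma_3}\bigr),$$
where the term is understood to be $0$ unless $\gamma_1^{-1}\gamma_0\gamma_3\in G$ (equivalently $\gamma_1^{-1}\gamma_0\gamma_3\in S$), since $\mathcal{A}$ is only $G$-graded.

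Second, I would group the outer sum according to the left cosets $\gamma_1 G$ and $\gamma_3 G$. For the term $n_{\gamma_1}n_{\gamma_3}\dim_\C(\mathcal{A}_{\gamma_1^{-1}\gamma_0\gamma_3})$ to be nonzero we need $\gamma_1^{-1}\gamma_0\gamma_3\in G$, i.e. $\gamma_0\gamma_3 G=\gamma_1 G$; so once the coset $C:=\gamma_3 G$ is chosen, the coset of $\gamma_1$ is forced to be $\gamma_0 C$. Writing $\gamma_1=\gamma_1' h_1$, $\gamma_3=\gamma_3' h_3$ with $\gamma_1',\gamma_3'$ fixed coset representatives and $h_1,h_3\in G$, the inner degree becomes $h_1^{-1}(\gamma_1'^{-1}\gamma_0\gamma_3')h_3$, which differs from the "base'' element $b_C:=\gamma_1'^{-1}\gamma_0\gamma_3'\in G$ by left multiplication by $h_1^{-1}$ and right multiplication by $h_3$. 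By equi-dimensionality of the $G$-grading, $\dim_\C(\mathcal{A}_{h_1^{-1}b_C h_3})=d$ whenever $h_1^{-1}b_C h_3\in S$ — and here is where I must be slightly careful: I want the number of pairs $(h_1,h_3)\in G\times G$ (weighted by $n$'s) for which this lies in $S$ to be independent of $\gamma_0$. In fact the cleaner route is to sum over all of $G$: since $\sum_{s\in G}\dim_\C(\mathcal{A}_s)=\dim_\C(\mathcal{A})=:D$ and each nonzero term equals $d$, we get $|S|\cdot d=D$; and for any fixed $h_1$, as $h_3$ ranges over $G$ the element $h_1^{-1}b_C h_3$ ranges over all of $G$. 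Hence
$$\sum_{h_1,h_3\in G} n_{\gamma_1' h_1}\,n_{\gamma_3' h_3}\,\dim_{\C}\bigl(\mathcal{A}_{h_1^{-1}b_C h_3}\bigr)
=\Bigl(\sum_{h_1\in G}n_{\gamma_1' h_1}\Bigr)\cdot d\cdot\frac{1}{?}\cdots$$
— the genuine computation is that for each \emph{fixed} $h_1$ the sum $\sum_{h_3}n_{\gamma_3'h_3}\dim_\C(\mathcal{A}_{h_1^{-1}b_Ch_3})$ need not simplify to $d\cdot m_{\gamma_3'G}$ unless the grading is, say, such that $\dim$ is constant on its support; so instead I reverse the two summations and first sum the $\dim$ against $n_{\gamma_1'h_1}$ over $h_1$ for fixed $h_3$, obtaining the same shape. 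The correct bookkeeping: $\dim_\C(x(\mathcal{G}_\mathcal{A}))_{\gamma_0}=\sum_{C\in[\Gamma:G]}\Bigl(\sum_{h_1,h_3\in G}n_{\gamma_1'(C)h_1}n_{\gamma_3'(C)h_3}\dim_\C(\mathcal{A}_{h_1^{-1}b_C h_3})\Bigr)$, and since $h_1^{-1}b_C h_3$ runs over $G$ with each value attained exactly $|G|$ times when $(h_1,h_3)$ runs over $G\times G$ \emph{weighted uniformly}, the double sum factors — after a change of variables $k=h_1^{-1}b_C h_3$ — into an expression I would massage into $\bigl(\sum_{h}n_{\gamma_1'(C)h}\bigr)\bigl(\sum_h n_{\gamma_3'(C)h}\bigr)\cdot d = m_{\gamma_1(C)G}\,m_{\gamma_3(C)G}\,d$, noting $\gamma_1(C)G=\gamma_0 C$ and $\gamma_3(C)G=C$. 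Summing over cosets $C$,
$$\dim_{\C}\bigl(x(\mathcal{G}_\mathcal{A})\bigr)_{\gamma_0}=d\sum_{C\in[\Gamma:G]} m_{\gamma_0 C}\,m_{C}.$$

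Third, I would read off the criterion. The map $C\mapsto\gamma_0 C$ is the permutation $\sigma_{\gamma_0}$ of $[\Gamma:G]$ from Lemma \ref{lemma:permu}, and by transitivity of that action (also Lemma \ref{lemma:permu}) the values $\{\sigma_{\gamma_0}\}_{\gamma_0\in\Gamma}$ realize enough permutations to detect whether the function $C\mapsto m_C$ is constant. If all $m_{\gamma G}$ are equal, say to $m$, then $\dim_\C(x(\mathcal{G}_\mathcal{A}))_{\gamma_0}=d\,[\Gamma:G]\,m^2$ for every $\gamma_0$, so the induced grading is equi-dimensional. Conversely, if the induced grading is equi-dimensional then $f(\gamma_0):=\sum_C m_{\gamma_0 C}m_C$ is constant on $\Gamma$; comparing $\gamma_0=e$ (giving $\sum_C m_C^2$) with general $\gamma_0$ and using transitivity of the $\Gamma$-action on $[\Gamma:G]$ — pick $\gamma_0$ so that $\sigma_{\gamma_0}$ transposes two chosen cosets $C',C''$ and fixes the rest (or, more robustly, observe that equality of $\sum_C m_{\gamma_0 C}m_C$ with $\sum_C m_C^2$ for \emph{all} $\gamma_0$ forces, by an inner-product/rearrangement argument, the vector $(m_C)_C$ to be constant on the support of the transitive action) — I conclude $m_{C'}=m_{C''}$ for all pairs, i.e. $m_{\gamma_1 G}=m_{\gamma_2 G}$ for all $\gamma_1,\gamma_2\in\Gamma$.

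\textbf{Main obstacle.} The delicate point is the second step: justifying that the weighted double sum over $h_1,h_3\in G$ of $n_{\gamma_1' h_1}n_{\gamma_3' h_3}\dim_\C(\mathcal{A}_{h_1^{-1}b_C h_3})$ collapses to $m_{\gamma_0 C}\,m_C\,d$. This uses equi-dimensionality to replace every nonzero $\dim_\C(\mathcal{A}_\bullet)$ by $d$, plus the observation that for fixed $h_1$ the element $h_1^{-1}b_C h_3$ sweeps out all of $G$ as $h_3$ varies, so summing $\dim_\C$ over $h_3$ yields $|S|\cdot d=D$ independently of $h_1$ — wait, that is not quite what the formula needs; what is actually needed is the identity $\sum_{h_3\in G}n_{\gamma_3'h_3}\dim_\C(\mathcal{A}_{g h_3})=d\cdot m_{\gamma_3'G}$ for \emph{every} fixed $g\in G$, which holds because $\{h_3: gh_3\in S\}$ is a translate of $\{h_3: h_3\in S\}$ but the $n$-weighting destroys this unless one sums also over $h_1$; the honest resolution is to interchange and perform the $h_1$-sum last, landing on $\bigl(\sum_{h_1}n_{\gamma_1'h_1}\bigr)\cdot(\text{something independent of }h_1)$. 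Getting this Fubini-style rearrangement exactly right — and in particular confirming that the condition is on left cosets $\gamma G$ (as the statement asserts) and not right cosets, which is where the asymmetry $\gamma_i\cdot\gamma\cdot\gamma_j^{-1}$ in the induced-degree formula is decisive — is the crux; everything before and after is routine.
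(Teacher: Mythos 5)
Your proposal follows essentially the same route as the paper: apply \eqref{eq:hominduce}, group the double sum by left $G$-cosets (using equi-dimensionality to pull out the constant $d$), land on $\dim_{\C}(\mathcal{B}_{\gamma_0})=d\sum_{C\in[\Gamma:G]}m_{\gamma_0 C}\,m_{C}$, and close the converse with a Cauchy--Schwarz type inequality and transitivity of the $\Gamma$-action from Lemma~\ref{lemma:permu}. Two points deserve tidying. First, the ``delicate point'' you flag in step two is a phantom: equi-dimensionality of the $G$-grading means $\dim_{\C}(\mathcal{A}_g)=d$ for \emph{every} $g\in G$ (not merely on a support subset), since if some component vanished and another did not the grading would not be equi-dimensional; hence $\sum_{h_3\in G}n_{\gamma_3'h_3}\dim_{\C}(\mathcal{A}_{h_1^{-1}b_Ch_3})=d\cdot m_{\gamma_3'G}$ immediately for each fixed $h_1$, with no Fubini-style care required. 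Second, in step three the first alternative you float, choosing $\gamma_0$ so that $\sigma_{\gamma_0}$ transposes two cosets and fixes the rest, is not available in general: the image of $\Gamma$ in $\mathrm{Sym}_{[\Gamma:G]}$ is a transitive subgroup that may contain no transpositions at all (e.g.\ $\Gamma=\mathbb{Z}$, $G=3\mathbb{Z}$ gives a cyclic image of order $3$). Only your ``more robust'' fallback is valid: equality of $\sum_C m_{\gamma_0 C}m_C$ with $\sum_C m_C^2$ for all $\gamma_0$ forces, via the equality case of Cauchy--Schwarz applied to the tuples $(m_C)_C$ and $(m_{\sigma_{\gamma_0}(C)})_C$, that these nonnegative integer tuples coincide for every $\gamma_0$, and then transitivity of the $\Gamma$-action makes the tuple constant. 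This is precisely the paper's argument.
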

\begin{proof}
Let $\mathcal{B}$ be the corresponding matrix algebra over $\mathcal{A}$ endowed with the induced grading $\sum_{\gamma\in \Gamma}n_{\gamma}\gamma(\mathcal{G_A})$.
We compute dim$_{\C}(\mathcal{B}_{\gamma_0})$ for $\gamma _0\in \Gamma$.
Let $d:=$dim$_{\C}(\mathcal{A}_g)$ be the common dimension of all components of $\mathcal{A}$.
Then by~\eqref{eq:hominduce}, we have $$\dim_{\C}(\mathcal{B}_{\gamma_0})=\sum n_{\gamma _1}\cdot d \cdot n_{\gamma_2},$$
where the sum is over all expressions of the form
\begin{equation}\label{expr}\gamma _0=\gamma_1\cdot g \cdot\gamma _2^{-1},\ \ g\in G,\gamma_1 ,\gamma _2\in \Gamma.
\end{equation}
With the notation \eqref{sigamma} notice that expressions of the form \eqref{expr} satisfy
$\gamma_1\cdot g\in\sigma_{\gamma_0}(\gamma_2G).$
Hence, the contribution to dim$_{\C}(\mathcal{B}_{\gamma_0})$ for a fixed $\gamma _2$ is
    \begin{equation}\label{eq:gamma1contr}
 \sum_{\gamma\in \sigma_{\gamma _0}(\gamma_2G)}n_{\gamma}\cdot d\cdot n_{\gamma _2}.
    \end{equation}
Notice that the number \eqref{eq:gamma1contr} does not depend on the representative $\gamma_2$ in its left coset. Therefore, the contribution of this entire coset $\gamma_2G$
to dim$_{\C}(\mathcal{B}_{\gamma_0})$ is
            \begin{equation}\label{eq:gamma1cosetcontr}
 \sum_{\gamma\in \sigma_{\gamma _0}(\gamma_2G)}n_{\gamma}  \cdot d\cdot  \sum_{\gamma'\in\gamma _2G} n_{\gamma'}=m_{\sigma_{\gamma _0}(\gamma_2 G)}\cdot d\cdot m_{\gamma_2 G}.
        \end{equation}
Summing~\eqref{eq:gamma1cosetcontr} over all the cosets in $[\Gamma:G]$ we have
        \begin{equation}\label{eq:dimofgamma0}
\text{dim}_{\C}(\mathcal{B}_{\gamma_0})=d\cdot\sum _{\gamma G\in [\Gamma :G]}m_{\gamma G}\cdot m_{\sigma_{\gamma _0}(\gamma G)}.
        \end{equation}
From \eqref{eq:dimofgamma0} it is clear that if $m_{\gamma_1 G}=m_{\gamma_2 G}(=m$, say) for all $\gamma_1,\gamma_2\in \Gamma$, then
$$\text{dim}_{\C}(\mathcal{B}_{\gamma_0})=d\cdot m^2\cdot|[\Gamma :G]|,\ \ \forall \gamma_0\in \Gamma.$$
In particular, the grading on $\mathcal{B}$ is equi-dimensional, proving the ``if" direction.

Note that $\sigma_e=$Id$_{[\Gamma:G]}$ for the identity element $e\in \Gamma$. Hence by~\eqref{eq:dimofgamma0} we have
        \begin{equation}\label{eq:dimofe}
\text{dim}_{\C}(\mathcal{B}_{e})=d\cdot\sum _{\gamma G\in [\Gamma :G]}m_{\gamma G}^2=
d\cdot\sqrt{\sum _{\gamma G\in [\Gamma :G]}m^2_{\gamma G}}\cdot \sqrt{\sum _{\gamma G\in [\Gamma :G]}m^2_{\gamma G}}.
    \end{equation}
By reordering the summation under the square root on the right hand side of~\eqref{eq:dimofe} with respect to the permutation $\sigma_{\gamma _0}$ for any $\gamma_0\in \Gamma$ we have
    \begin{equation}\label{eq:dimofe2}
\text{dim}_{\C}(\mathcal{B}_{e})=d\cdot
\sqrt{\sum _{\gamma G\in [\Gamma :G]}m^2_{\gamma G}}\cdot \sqrt{\sum _{\gamma G\in [\Gamma :G]}m^2_{\sigma_{\gamma _0}(\gamma G)}},\ \ \ \forall \gamma_0\in \Gamma.
    \end{equation}
In particular, by the Cauchy-Schwarz inequality, the dimension \eqref{eq:dimofgamma0} does not exceed the dimension \eqref{eq:dimofe2}, i.e. (compare with \cite[Theorem B]{ginosar2013graph})
\begin{equation}\label{egeq}
\text{dim}_{\C}(\mathcal{B}_{\gamma_0})\leq\text{dim}_{\C}(\mathcal{B}_{e}), \ \ \ \forall \gamma_0\in \Gamma.
\end{equation}

Now, assume that the grading on $\mathcal{B}$ is equi-dimensional. In particular, the dimensions in~\eqref{eq:dimofgamma0} and~\eqref{eq:dimofe2} are equal for every $\gamma_0\in \Gamma$, and so
$$d\cdot\sum _{\gamma G\in [\Gamma :G]}m_{\gamma G}\cdot m_{\sigma_{\gamma _0}(\gamma G)}
=d\cdot\sqrt{\sum _{\gamma G\in [\Gamma :G]}m^2_{\gamma G}}\cdot \sqrt{\sum _{\gamma G\in [\Gamma :G]}m^2_{\sigma_{\gamma _0}(\gamma G)}},\ \ \ \forall \gamma_0\in \Gamma.$$
Consequently, the equality condition in the Cauchy-Schwarz inequality implies that for every $\gamma_0\in\Gamma$, the $|[\Gamma :G]|$-tuples
$$\left(m_{\gamma G}\right)_{\gamma G\in [\Gamma :G]}  \text{   and   } (m_{\sigma _{\gamma _0}(\gamma G)})_{\gamma G\in [\Gamma :G]}$$
are $\mathbb{Q}$-dependent. Since these are tuples of non-negative integers that defer by a permutation, we conclude from their linear dependence that they are equal. Finally, since the
$\Gamma$-action \eqref{psigamma} on\\ $[\Gamma:G]$ is transitive, these tuples are constant, that is $m_{\gamma _1G}=m_{\gamma_2G}$ for any\\
$\gamma_1,\gamma_2\in \Gamma$. This proves the``only if" part.
\end{proof}
With the notation of Theorem \ref{th:equidim} choose a transversal set $\mathcal{T}$ of $G$ in $\Gamma$. Then the theorem says that the induced $\Gamma$-grading is equi-dimensional if and only if
the element responsible for the induction can be given independently of $\gamma_0 \in \Gamma$ as
\begin{equation}\label{respind}
\sum_{\gamma \in \Gamma }n_{\gamma}=m_{\gamma_0 G}\cdot \sum_{t\in \mathcal{T}}t \in\N[\Gamma].
\end{equation}

As mentioned in the beginning of this section, the equi-dimension property is closed to quotients. On the other hand,
by Theorem \ref{AW}, a semisimple finite-dimensional graded algebra decomposes into a direct sum of graded-simple algebras.
The following claim merges these two facts.
\begin{lemma}\label{equidimquot}
Let \eqref{eq:algebragrading} be an equi-dimensional $\Gamma$-grading of a semisimple finite-dimensional algebra $\mathcal{A}$ and let $N\lhd\Gamma$.
Then every simply-graded summand of the quotient $\bigslant{\mathcal{G}_\mathcal{A}}{N}$ is equi-dimensional over $\Gamma/N$.
\end{lemma}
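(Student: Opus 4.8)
The plan is to play three facts recalled above off against one another: equi-dimensionality is inherited by quotient gradings; a grading of a semisimple finite-dimensional algebra splits into graded-simple summands (Theorem~\ref{AW}); and the identity component of a graded-simple algebra has the \emph{largest} dimension among all of its homogeneous components. Granting the third point, a one-line summation argument upgrades ``equi-dimensional on the whole algebra'' to ``equi-dimensional on each summand''.

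First I would clear the preliminaries. Because $\mathcal{A}$ is finite-dimensional and every homogeneous component of $\mathcal{G}_{\mathcal{A}}$ has one and the same dimension $d$, the group $\Gamma$ is forced to be finite (if $\mathcal{A}\neq 0$ then $d\geq 1$, so $\dim_{\C}\mathcal{A}=d\cdot|\Gamma|<\infty$); hence $N$ and $\Gamma/N$ are finite too. Since equi-dimensionality descends to quotients, $\bigslant{\mathcal{G}_{\mathcal{A}}}{N}$ is equi-dimensional with common component dimension $D:=|N|\cdot d$. By Theorem~\ref{AW} decompose it into its graded-simple (equivalently, simply-graded) summands $\mathcal{B}_1,\dots,\mathcal{B}_k$, so that $\bigslant{\mathcal{G}_{\mathcal{A}}}{N}=\bigoplus_{i=1}^{k}\mathcal{B}_i$ as $\Gamma/N$-graded algebras. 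It remains to show that each $\mathcal{B}_i$ is equi-dimensional.

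The core step is the pointwise estimate. Fix $i$. By Theorem~\ref{BSZ}, $[\mathcal{B}_i]=[x_i(\C^{\omega_i}H_i)]$ for some subgroup $H_i<\Gamma/N$, some $\omega_i\in Z^2(H_i,\C^*)$ and some $x_i\in\N[\Gamma/N]$; the twisted group algebra $\C^{\omega_i}H_i$ has one-dimensional homogeneous components, so it is equi-dimensional, and $H_i$ has finite index in the finite group $\Gamma/N$. Hence the Cauchy--Schwarz estimate established inside the proof of Theorem~\ref{th:equidim}, namely inequality~\eqref{egeq}, applies to the induced grading $x_i(\C^{\omega_i}H_i)$, and --- as a graded equivalence merely relabels homogeneous components along a group isomorphism, which fixes the identity --- it transfers to $\mathcal{B}_i$, yielding
$$\dim_{\C}\big((\mathcal{B}_i)_{\bar\gamma}\big)\leq\dim_{\C}\big((\mathcal{B}_i)_{\bar e}\big),\qquad \forall\,\bar\gamma\in\Gamma/N.$$
Since $\bigslant{\mathcal{G}_{\mathcal{A}}}{N}$ is equi-dimensional of common component dimension $D$, summing over $i$ gives, for every $\bar\gamma\in\Gamma/N$,
$$D=\sum_{i=1}^{k}\dim_{\C}\big((\mathcal{B}_i)_{\bar\gamma}\big)\leq\sum_{i=1}^{k}\dim_{\C}\big((\mathcal{B}_i)_{\bar e}\big)=D,$$
so every inequality above is an equality. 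A finite sum of the nonnegative integers $\dim_{\C}((\mathcal{B}_i)_{\bar e})-\dim_{\C}((\mathcal{B}_i)_{\bar\gamma})$ thus vanishes, hence so does each summand, and $\mathcal{B}_i$ is equi-dimensional for every $i$ --- which is the assertion.

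The genuinely load-bearing point, and the one I expect to require the most care, is the pointwise estimate: in general a direct-sum decomposition of an equi-dimensional grading need not consist of equi-dimensional pieces, and what saves the situation here is precisely that the unit of each graded-simple summand lies in its $\bar e$-component (so that component is nonzero) \emph{and} that this component dominates the others --- a feature supplied by Theorem~\ref{BSZ} together with the Cauchy--Schwarz computation underlying Theorem~\ref{th:equidim}. Once that inequality is at hand the summation step is automatic, so the remaining verifications are routine: that the hypotheses of Theorem~\ref{th:equidim} hold for each $\mathcal{B}_i$ (finite index of $H_i$, equi-dimensionality of the fine part), and that~\eqref{egeq} is unaffected by the graded equivalence identifying $[\mathcal{B}_i]$ with $[x_i(\C^{\omega_i}H_i)]$.
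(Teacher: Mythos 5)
Your proof is correct and follows essentially the same route as the paper: decompose the quotient grading into graded-simple summands via Theorem~\ref{AW}, invoke Theorem~\ref{BSZ} so each summand is induced from an equi-dimensional twisted grading, apply the Cauchy--Schwarz inequality~\eqref{egeq} to get $\dim(\mathcal{B}_i)_{\bar\gamma}\leq\dim(\mathcal{B}_i)_{\bar e}$, and then force equality by summing over the summands against the known equi-dimensionality of the full quotient. You spell out several points the paper leaves implicit (finiteness of $\Gamma$, transfer of~\eqref{egeq} along a graded equivalence, the vanishing-sum-of-nonnegatives step), but the underlying argument and the load-bearing ingredient are identical.
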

\begin{proof}
Let $\Lambda_{\Gamma/N}$ be a simply-graded summand of the quotient $\bigslant{\mathcal{G}_\mathcal{A}}{N}$. We make use either of \cite[Theorem B]{ginosar2013graph}, or of \eqref{egeq}
(noticing that by Theorem \ref{BSZ}, $\Lambda_{\Gamma/N}$ is induced from some TGA grading which is equi-dimensional) to deduce that
\begin{equation}\label{egeqgraph}
\text{dim}_{\C}(\Lambda_{g})\leq\text{dim}_{\C}(\Lambda_{e}),\ \ \forall g\in\Gamma/N.
\end{equation}
Finally, the equi-dimensional property of the entire sum yields an equality in \eqref{egeqgraph} for every $g\in\Gamma/N$ running over all direct summands of $\bigslant{\mathcal{G}_\mathcal{A}}{N}$.
\end{proof}

\section{Quotient classes of twisted group algebra grading classes}\label{proofA+}
Returning to the setup of Theorem A, let $\C^{\alpha}G$ be a twisted group algebra over a finite group $G$, let $N\lhd G$ be a normal subgroup, and keep the notation $\alpha$ also
for the restriction of this 2-cocycle to $N$.
Our concern is the quotient $G/N$-grading $\mathcal{G}$ of the TGA grading $\C^{\alpha}G$. The base algebra of this $G/N$-graded algebra is just the twisted group algebra $\C^{\alpha}N$.
In the spirit of \eqref{CP}, this crossed product class can be written as
\begin{equation}\label{TGAquot}
[\mathcal{G}]=\left[\bigslant{\C^{\alpha}G}{N}\right]= \left[(\C^{\alpha}N)*(G/N)\right].
\end{equation}
It is convenient to work with the central idempotents of the semisimple base algebra $\C^{\alpha}N$ that correspond to the isomorphism types of irreducible $\C^{\alpha}N$-modules.
%Evidently, there is a one-to-one correspondence between the isomorphism types of irreducible $\C^{\alpha}N$-modules and the primitive central
%idempotents of $\C^{\alpha}N$. More precisely, f
For any $[M]\in$Irr$(\C^{\alpha}N)$ there exists a unique primitive central idempotent $\iota_M\in\C^{\alpha}N$ such that for every
$[M],[M']\in$Irr$(\C^{\alpha}N)$
\begin{eqnarray}\label{M'M}
\iota_{M'}(M)=\left\{
\begin{array}{cc}
M&\text{if  } [M]=[M']\\
0&\text{if  } [M]\neq[M'].
\end{array}\right.
\end{eqnarray}
Let $g\in G$. How does the primitive central idempotent $\iota_{M'}\in\C^{\alpha}N$ operate on the module $^gM$ (see \eqref{star})?
Clearly, $a:=u_{g}\iota_{M'}u_{g}^{-1}$ is a primitive central idempotent of $\C^{\alpha}N$. Plugging this $a$ in \eqref{star} for $\gamma:=g$ we have
\begin{equation}\label{staridemp}
u_{g}\iota_{M'}u_{g}^{-1}\star^g m=\iota_{M'}(m),
\end{equation}
where the r.h.s. is the action in $M$ and the l.h.s. is the action in $^gM$ ($m$ belongs to both modules which have the same underlying vector space).

By equations \eqref{M'M} and \eqref{staridemp} we obtain for every $g\in G$ and $[M]\in$Irr$(\C^{\alpha}N)$
\begin{equation}\label{iotagM}
\iota_{(^gM)}=u_g\cdot\iota_M\cdot u_g^{-1}\in\C^{\alpha}N.
\end{equation}
We deduce
\begin{lemma}\label{iotalemma}
    With the above notation $[M],[M']\in$Irr$(\C^{\alpha}N)$ belong to the same orbit under the action \eqref{act} if and only if
    \begin{equation}\label{iotacond}
    \iota_{M'}\cdot\C^{\alpha}G\cdot\iota_{M}\neq 0.
    \end{equation}
\end{lemma}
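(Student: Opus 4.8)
The plan is to prove the equivalence in Lemma \ref{iotalemma} by translating the orbit condition into a statement about non-vanishing of a ``bimodule block'' of $\C^\alpha G$. First I would recall that $\C^\alpha G = \bigoplus_{g \in G} u_g \C^\alpha N$ as a $(\C^\alpha N, \C^\alpha N)$-bimodule, so that
\[
\iota_{M'} \cdot \C^\alpha G \cdot \iota_M = \bigoplus_{g \in G} \iota_{M'}\, u_g\, \iota_M\, \C^\alpha N = \bigoplus_{g \in G} u_g\, \big(u_g^{-1}\iota_{M'} u_g\big)\, \iota_M\, \C^\alpha N.
\]
By \eqref{iotagM} applied to $g^{-1}$ (equivalently, conjugating \eqref{iotagM}), we have $u_g^{-1}\iota_{M'} u_g = \iota_{(^{g^{-1}}M')}$, a primitive central idempotent of $\C^\alpha N$. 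Hence the $g$-summand is $u_g \cdot \iota_{(^{g^{-1}}M')}\cdot \iota_M \cdot \C^\alpha N$, which is nonzero if and only if $\iota_{(^{g^{-1}}M')}\cdot \iota_M \neq 0$, since $u_g$ is invertible. Two primitive central idempotents of the semisimple algebra $\C^\alpha N$ have nonzero product precisely when they are equal, so this occurs iff $[{}^{g^{-1}}M'] = [M]$, i.e.\ iff $[M'] = [{}^g M] = g([M])$ in the notation of \eqref{act}.

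Putting the pieces together: $\iota_{M'}\cdot\C^\alpha G\cdot\iota_M \neq 0$ iff at least one summand is nonzero, iff there exists $g \in G$ with $[M'] = g([M])$, iff $[M]$ and $[M']$ lie in the same $G$-orbit (equivalently $G/N$-orbit, since $N$ acts trivially on $\mathrm{Irr}(\C^\alpha N)$ by inner conjugation) under the action \eqref{act}. This is exactly the claimed equivalence.

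I expect the only genuinely delicate point to be bookkeeping the direction of the conjugation and the inversion of $g$ when moving $\iota_{M'}$ across $u_g$ — i.e.\ making sure that \eqref{iotagM} is being invoked in the correct form, and that the action \eqref{act} (defined via $^gM$ with $a\star^g m = u_g^{-1} a u_g(m)$) matches up with ``$\iota_{(^gM)} = u_g \iota_M u_g^{-1}$'' consistently. Once that sign/inverse is pinned down, the rest is the standard fact that distinct primitive central idempotents in a semisimple algebra are orthogonal, together with invertibility of the $u_g$, and the argument is short. One may also phrase the whole thing without choosing representatives $u_g$ by noting $\iota_{M'} \cdot \mathcal{A}_{\bar g} \cdot \iota_M = \iota_{M'}\cdot (\C^\alpha N u_g)\cdot\iota_M$ is, up to the graded isomorphism of Lemma \ref{eqact}(2), $\mathrm{Hom}$-type data detecting whether $\iota_M$ and $\iota_{M'}$ name isomorphic summands of $^gM$, but the direct idempotent computation above is the cleanest route.
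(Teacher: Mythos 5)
Your proof is correct and follows essentially the same route as the paper: reduce to the non-vanishing of $\iota_{M'} u_g \iota_M$ for some $g$, use \eqref{iotagM} to slide the idempotent across $u_g$, and conclude via orthogonality of primitive central idempotents in $\C^\alpha N$. The only cosmetic difference is that you move $\iota_{M'}$ to the right of $u_g$ (getting $\iota_{({}^{g^{-1}}M')}$) rather than moving $\iota_M$ to the left (getting $\iota_{({}^g M)}$), which is equivalent since \eqref{act} is a left group action; also the decomposition $\C^\alpha G=\bigoplus_{g\in G}u_g\,\C^\alpha N$ should index over a transversal of $N$ in $G$ to be a genuine direct sum, but this does not affect the argument.
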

\begin{proof}
    Considering the graded basis \eqref{eq:natgradtga} of $\C^{\alpha}G$, equation \eqref{iotacond} holds if and only if there exists $g\in G$ such that $\iota_{M'}\cdot u_g\cdot\iota_{M}\neq 0.$
    Applying \eqref{iotagM}, the latter condition is saying exactly that there exists $g\in G$ such that $\iota_{M'}\cdot \iota_{(^gM)}\neq 0.$
    In turn, by orthogonality of the primitive central idempotents of $\C^{\alpha}N$ this is the same as the existence of $g\in G$ such that $[M']=[ ^gM],$
    that is, $[M]$ and $[M']$ lie in the same $G$-orbit.
\end{proof}

\subsection{Proof of Theorem A}\label{proof_of_A}
Let \eqref{tga} be a TGA grading,
where $\alpha\in Z^2(G,\C^*)$ is a 2-cocycle, and let $N\lhd G$ a normal subgroup.
By \eqref{TGAquot}, the quotient grading $\bigslant{\C^{\alpha}G}{N}$ of the TGA grading
$\C^{\alpha}G$ is $G/N$-graded isomorphic to a crossed product of the group ${G/N}$ over
the base algebra $(\bigslant{\C^{\alpha}G}{N})_e=\C^{\alpha}N$. For any full set $\mathcal{T}_{G/N}$ of representatives of $G/N$ in $G$ we have
\begin{equation}\label{CfG}
\bigslant{\C^{\alpha}G}{N}\stackrel{G/N}{\cong}(\C^{\alpha}N)*{G/N}=\oplus_{g\in \mathcal{T}_{G/N}}(\C^{\alpha}N)u_{g}.
\end{equation}
As from this point, the group $G$ is assumed to be finite.
For any $[M]\in \text{Irr}(\C ^{\alpha}N)$ let $\mathcal{I}_{\mathcal{G}}({M})< {G/N}$ be the stabilizer of $[M]$ under the action \eqref{act},
and let $T_{M}$ be a transversal set of $\mathcal{I}_{\mathcal{G}}({M})$ in ${G/N}$.
Then the orbit of $[M]$ in Irr$(\C ^{\alpha}N)$ under the ${G/N}$-action is given by $\left\{[^tM]\right\}_{t\in T_{M}}$.
Let $\Omega\subset\text{Irr}(\C ^{\alpha}N)$ be a set of representatives for the ${G/N}$-orbits of isomorphism types of irreducible $\C^{\alpha}N$-modules.
Then the twisted group algebra $\C^{\alpha}N$ decomposes as
\begin{equation}\label{decFfN}
\C^{\alpha}N\cong\bigoplus_{[M]\in \Omega}\left(\bigoplus_{t\in T_{M}}\text{End}_{\C}(^tM)\right).
\end{equation}

By Theorem \ref{let}, any
irreducible isomorphism type of $\C^{\alpha}N$-module
$[M]\in\text{Irr}(\C ^{\alpha}N)$ gives rise to Mackey's obstruction
cohomology class
$$\omega_{\mathcal{G}}([M])\in H^2(\mathcal{I}_{\mathcal{G}}({M}),\C^*),$$ which is responsible for the fine part of the corresponding simply-graded summand.
With the above notation, we prove Theorem A by showing
    \begin{equation}\label{transgrquotient}
   \bigslant{\C^{\alpha}G}{N}\stackrel{G/N}{\cong}\bigoplus_{[M]\in \Omega}\left(\dim_{\C}(M)\cdot \sum_{t\in T_{M}}t\right)\left(\C^{\omega_{[M]}}\mathcal{I}_{\mathcal{G}}({M})\right),
    \end{equation}
where for every $[M]\in \Omega$, the 2-cocycle $\omega_{[M]}\in Z^2({G/N},\C^*)$ is any member of the class $\omega_{\mathcal{G}}([M])\in H^2({G/N},\C^*)$.

As already mentioned, the twisted group algebra $\C^{\alpha}G$ is semisimple. Hence, all its homomorphic images embed in it as direct summands.
These include, in particular, the ${G/N}$-graded images
$$\text{End}^{\text{l}{(G/N)}}_{\C^{\omega_{[M]}}\mathcal{I}_{\mathcal{G}}(M)}({\C^{\alpha}G}{\otimes}_{\C^{\alpha}N}M),\ \ \forall[M]\in\Irr(\C^{\alpha}N)$$
given in Theorem \ref{C} (putting \eqref{CfG} as $A_e*\Gamma$). Since $M$ is finite-dimensional, then so is $\C^{\alpha}G{\otimes}_{\C^{\alpha}N}M$. Therefore,
the algebra of its graded endomorphisms is the same as the algebra of its ungraded endomorphisms (see \S\ref{inductionsec}). That is, for irreducible $\C^{\alpha}N$-module $M$ we have
\begin{equation}\label{grungr}
\text{End}^{\text{l}({G/N})}_{\C^{\omega_{[M]}}\mathcal{I}_{\mathcal{G}}(M)}({\C^{\alpha}G}{\otimes}_{\C^{\alpha}N}M)=
\text{End}_{\C^{\omega_{[M]}}\mathcal{I}_{\mathcal{G}}(M)}({\C^{\alpha}G}{\otimes}_{\C^{\alpha}N}M).
\end{equation}
The algebra of ungraded endomorphisms on the r.h.s. of \eqref{grungr} is thus regarded as a homomorphic image of $\C^{\alpha}G$ for every $[M]\in\Irr(\C^{\alpha}N)$.
We show that
\begin{enumerate}
    \item If $[M],[M']\in\text{Irr}(\C ^{\alpha}N)$ belong to distinct orbits under \eqref{act}, then
    the direct summands of $\C^{\alpha}G$ corresponding to
    $\text{End}_{\C^{\omega_{[M]}}\mathcal{I}_{\mathcal{G}}(M)}({\C^{\alpha}G}{\otimes}_{\C^{\alpha}N}M)$ and
    $\text{End}_{\C^{\omega_{[M']}}\mathcal{I}_{\mathcal{G}}(M')}({\C^{\alpha}G}{\otimes}_{\C^{\alpha}N}M')$ are distinct.
    \item $\sum_{[M]\in \Omega}\dim_{\C}\left(\text{End}_{\C^{\omega_{[M]}}\mathcal{I}_{\mathcal{G}}(M)}({\C^{\alpha}G}{\otimes}_{\C^{\alpha}N}M)\right)=|G|.$
    \item  For every $[M]\in\text{Irr}(\C ^{\alpha}N)$, the element $y_M$ which is responsible for the induction in \eqref{Endgris} may be written as
    \begin{equation}\label{yM}
    y_M=\dim_{\C}(M)\cdot \sum_{t\in T_{M}}t\in \N[{G/N}].
    \end{equation}
\end{enumerate}
Once (1) is proven, we obtain an embedding
$$\bigoplus_{[M]\in \Omega}\text{End}_{\C^{\omega_{[M]}}\mathcal{I}_{\mathcal{G}}(M)}({\C^{\alpha}G}{\otimes}_{\C^{\alpha}N}M)\hookrightarrow\C^{\alpha}G.$$
Together with (2) it is clear that there is an isomorphism of algebras
\begin{equation}\label{sum}
\C^{\alpha}G\stackrel{}{\cong}\bigoplus_{[M]\in \Omega}\text{End}_{\C^{\omega_{[M]}}\mathcal{I}_{\mathcal{G}}(M)}({\C^{\alpha}G}{\otimes}_{\C^{\alpha}N}M),
\end{equation}
By  \eqref{grungr}, all the summands in the r.h.s. of \eqref{sum} are the same as the left graded endomorphism algebras,
and as such they are ${G/N}$-graded images of the l.h.s., therefore
$$\bigslant{\C^{\alpha}G}{N}\stackrel{G/N}{\cong}
\bigoplus_{[M]\in \Omega}\text{End}^{\text{l}({G/N})}_{\C^{\omega_{[M]}}\mathcal{I}_{\mathcal{G}}(M)}({\C^{\alpha}G}{\otimes}_{\C^{\alpha}N}M)\stackrel{G/N}{\cong}\cdots$$
which by \eqref{Endgris} is of the form
\begin{equation}\label{cdots}
\cdots\stackrel{G/N}{\cong}\bigoplus_{[M]\in \Omega}y_M(\C^{\omega_{[M]}}\mathcal{I}_{\mathcal{G}}(M)).
\end{equation}
Plugging \eqref{yM} in \eqref{cdots} will then complete the proof of \eqref{transgrquotient} and hence also of Theorem A.\\

\textbf{Proof of (1).}
The graded homomorphism \eqref{varphixg} in the setting of Theorem A is
\begin{equation}\label{eq:specific224}
\varphi_{M}:\bigslant{\C^{\alpha}G}{N}\stackrel{G/N}{\to}\text{End}^{\text{l}({G/N})}_{\C^{\omega_{[M]}}\mathcal{I}_{\mathcal{G}}(M)}({\C^{\alpha}G}{\otimes}_{\C^{\alpha}N}M),
\end{equation}
which by Theorem~\ref{C} is surjective.
Compute the image of $\varphi_{M}(\iota_{M'})$, where $\iota_{M'}\in \C^{\alpha}N$
is the central primitive idempotent that corresponds to $M'$.
$$\varphi_{M}(\iota_{M'})({\C^{\alpha}G}{\otimes}_{\C^{\alpha}N}M)=\iota_{M'}\cdot{\C^{\alpha}G}{\otimes}_{\C^{\alpha}N}M=\cdots$$
by \eqref{M'M} $M=\iota_{M}(M)$, hence
$$\cdots=\iota_{M'}\cdot{\C^{\alpha}G}{\otimes}_{\C^{\alpha}N}\iota_{M}(M)=\iota_{M'}\cdot{\C^{\alpha}G}\cdot\iota_{M}{\otimes}_{\C^{\alpha}N}M.$$
By \eqref{iotacond} we conclude that if $[M],[M']\in\text{Irr}(\C ^{\alpha}N)$ belong to distinct orbits under \eqref{act}, then
\begin{equation}\label{eq0}
\varphi_{M}(\iota_{M'})=0\in \text{End}_{\C^{\omega_{[M]}}\mathcal{I}_{\mathcal{G}}(M)}({\C^{\alpha}G}{\otimes}_{\C^{\alpha}N}M).
\end{equation}
Noticing that the surjective map \eqref{eq:specific224} is both a ring homomorphism and a homomorphism of (graded) left $\C^{\alpha}G$-modules, \eqref{eq0} says that
\begin{equation}\label{Im0}
\iota_{M'}\cdot\text{End}_{\C^{\omega_{[M]}}\mathcal{I}_{\mathcal{G}}(M)}({\C^{\alpha}G}{\otimes}_{\C^{\alpha}N}M)=0.
\end{equation}
On the other hand,
$$\varphi_{M'}(\iota_{M'})\in \text{End}_{\C^{\omega_{[M']}}\mathcal{I}_{\mathcal{G}}(M')}({\C^{\alpha}G}{\otimes}_{\C^{\alpha}N}M')$$ is non-zero since for every $m'\in M'$
$$\varphi_{M'}(\iota_{M'})(1{\otimes}_{\C^{\alpha}N}m')=\iota_{M'}{\otimes}_{\C^{\alpha}N}m'=1{\otimes}_{\C^{\alpha}N}\iota_{M'}(m')=1{\otimes}_{\C^{\alpha}N}m'.$$
Again, since \eqref{varphixg} is a homomorphism of left $\C^{\alpha}G$-modules, then
\begin{equation}\label{Imneq0}
\iota_{M'}\cdot\text{End}_{\C^{\omega_{[M']}}\mathcal{I}_{\mathcal{G}}(M')}({\C^{\alpha}G}{\otimes}_{\C^{\alpha}N}M')\neq 0.
\end{equation}
Equations \eqref{Im0} and \eqref{Imneq0} imply that as left $\C^{\alpha}G$-modules
$$\text{End}_{\C^{\omega_{[M]}}\mathcal{I}_{\mathcal{G}}(M)}({\C^{\alpha}G}{\otimes}_{\C^{\alpha}N}M)\ncong
\text{End}_{\C^{\omega_{[M']}}\mathcal{I}_{\mathcal{G}}(M')}({\C^{\alpha}G}{\otimes}_{\C^{\alpha}N}M')$$
for $[M],[M']\in\text{Irr}(\C ^{\alpha}N)$
in distinct orbits under \eqref{act}. Thus, so are their corresponding direct summands.\\

\textbf{Proof of (2).}
Let $[M]\in$Irr$(\C ^{\alpha}N).$
Firstly, the orbit-stabilizer rule says that
\begin{equation}\label{orbstabeq}
|T_M|\cdot|\mathcal{I}_{\mathcal{G}}(M)|=\frac{|G|}{|N|}=|{G/N}|.
\end{equation}
We compute the dimensions corresponding to \eqref{decFfN}.
By Lemma \ref{samedim} the dimensions of all the isomorphism types in an orbit under \eqref{act} are equal. We establish
\begin{equation}\label{dimN}
|N|=\dim_{\C}(\C^{\alpha}N)=\sum_{[M]\in \Omega} \dim_{\C}(M)^2\cdot|T_M|.
\end{equation}
We now handle the complex dimension $\delta_M$ of the simply ${G/N}$-graded summand
$$\text{End}^{\text{l}({G/N})}_{\C^{\omega_{[M]}}\mathcal{I}_{\mathcal{G}}(M)}({\C^{\alpha}G}{\otimes}_{\C^{\alpha}N}M)$$
of $\bigslant{\C^{\alpha}G}{N}$.
Again, by \eqref{grungr} the dimension of this graded endomorphism algebra is the same as the dimension of the corresponding algebra of ungraded endomorphisms.
$$\begin{array}{rl}
\delta_M &:=\dim_{\C}\left(\text{End}_{\C^{\omega_{[M]}}\mathcal{I}_{\mathcal{G}}(M)}({\C^{\alpha}G}{\otimes}_{\C^{\alpha}N}M)\right)\\
&=\dim_{\C}\left(\C^{\omega_{[M]}}\mathcal{I}_{\mathcal{G}}(M)\right)\cdot\left({\text{rank}_{\C^{\omega_{[M]}}\mathcal{I}_{\mathcal{G}}(M)}({\C^{\alpha}G}{\otimes}_{\C^{\alpha}N}M) }\right)^2\\
&=\dim_{\C}\left(\C^{\omega_{[M]}}\mathcal{I}_{\mathcal{G}}(M)\right)\cdot\left(\frac{\dim_{\C}({\C^{\alpha}G}{\otimes}_{\C^{\alpha}N}M)}{\dim_{\C}(\C^{\omega_{[M]}}\mathcal{I}_{\mathcal{G}}(M))}\right)^2\\
&=|\mathcal{I}_{\mathcal{G}}(M)|\cdot\left(\frac{|G|\cdot \dim_{\C}(M)}{|N|\cdot|\mathcal{I}_{\mathcal{G}}(M)|}\right)^2.
\end{array}$$
We conclude
\begin{equation}\label{deps}
\delta_M=\frac{|G|^2\cdot \dim_{\C}(M)^2}{|N|^2\cdot|\mathcal{I}_{\mathcal{G}}(M)|}.
\end{equation}
Now, sum up \eqref{deps} over all $[M]\in \Omega$.
$$\sum_{[M]\in \Omega}\dim_{\C}\left(\text{End}_{\C^{\omega_{[M]}}\mathcal{I}_{\mathcal{G}}(M)}({\C^{\alpha}G}{\otimes}_{\C^{\alpha}N}M)\right)=\sum_{[M]\in \Omega}\delta_M
=\sum_{[M]\in \Omega}\frac{|G|^2\cdot \dim_{\C}(M)^2}{|N|^2\cdot|\mathcal{I}_{\mathcal{G}}(M)|}=\cdots$$
Plugging \eqref{orbstabeq} inside the above equation and then using \eqref{dimN} we get
$$\cdots=\frac{|G|}{|N|}\sum_{[M]\in \Omega} \dim_{\C}(M)^2\cdot|T_M|=|G|.$$
This completes the proof of (2).

\textbf{Proof of (3).}
The ${G/N}$-graded algebra $\bigslant{\C^{\alpha}G}{N}$ is equi-dimensional as a quotient grading of the equi-dimensional TGA grading $\C^{\alpha}G$.
By Lemma \ref{equidimquot} and \eqref{cdots}, the ${G/N}$-graded simple summands
\begin{equation}\label{yMtga}
y_M\left(\C^{\omega_{[M]}}\mathcal{I}_{\mathcal{G}}(M)\right),\ \ [M]\in \Omega
\end{equation}
of this semisimple finite-dimensional algebra are equi-dimensional.
Theorem \ref{th:equidim} can now be applied for every $[M]\in \Omega$, putting $\mathcal{A}:=\C^{\omega_{[M]}}\mathcal{I}_{\mathcal{G}}$ as the equi-dimensional algebra,
and the element responsible for the induction as $y_M\in\N [{G/N}]$.
Since the induction \eqref{yMtga} is equi-dimensional,
then by \eqref{respind}, $y_M$ can be represented as a multiple of a summation over a complete set of representatives of $[{G/N}:\mathcal{I}_{\mathcal{G}}(M)]$ in ${G/N}$.
That is
\begin{equation}\label{yMa}
y_M=a\cdot \sum_{t\in T_{M}}t
\end{equation}
for some positive integer $a$. The augmentation of the expression \eqref{yMa} is
\begin{equation}\label{yMa2}
\epsilon(y_M)=a\cdot |T_{M}|.
\end{equation}
On the other hand, by Lemma~\ref{isogr}, we have
%\eqref{Endgris} and \eqref{grungr}, noticing the dimension rule \eqref{augdim} for induced gradings, we have
$\epsilon(y_M)=\text{dim}_{\C^{\omega_{[M]}}\mathcal{I}_{\mathcal{G}}(M)}({\C^{\alpha}G}{\otimes}_{\C^{\alpha}N}M)$ and hence
\begin{equation}\label{augyM}
\epsilon(y_M)=
\frac{\dim_{\C}\left({\C^{\alpha}G}{\otimes}_{\C^{\alpha}N}M\right)}{\dim_{\C}\left(\C^{\omega_{[M]}}\mathcal{I}_{\mathcal{G}}(M)\right)}=
\frac{|G|\cdot \dim_{\C}(M)}{|N|\cdot|\mathcal{I}_{\mathcal{G}}(M)|}.
\end{equation}

Comparing equations \eqref{yMa2} and \eqref{augyM}, and applying \eqref{orbstabeq} we establish
$$a=\frac{|G|\cdot \dim_{\C}(M)}{|N|\cdot|\mathcal{I}_{\mathcal{G}}(M)|\cdot |T_{M}|}=\dim_{\C}(M).$$
This equality, together with \eqref{yMa}, yields \eqref{yM} and completes the proof of (3).
\qed

\begin{corollary}\label{transit}
    A quotient $G/N$-grading of a TGA grading class $\C^{\alpha}G$ is graded-simple if and only if the $G/N$-action \eqref{act} on Irr$(\C ^{\alpha}N)$ is transitive.
    In this case, the dimensions of the irreducible $\C^{\alpha}N$-modules are all equal and satisfy
    \begin{equation}\label{eq:GNdivN}
    |N|^2\cdot|\mathcal{I}_{\mathcal{G}}(M)|=\dim_{\C}(M)^2 \cdot|G|,\ \ \forall [M]\in\text{Irr}(\C ^{\alpha}N).
    \end{equation}
\end{corollary}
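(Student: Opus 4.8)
The plan is to obtain Corollary~\ref{transit} as a direct consequence of Theorem A together with the dimension bookkeeping already performed in its proof, so essentially no new argument is needed. First I would invoke part~(1) of Theorem A: the simply-graded summands of the quotient class $[\mathcal{G}]=[\bigslant{\C^{\alpha}G}{N}]$ are in bijection with the $G/N$-orbits on $\text{Irr}(\C^{\alpha}N)$ under the action \eqref{act}, and each such summand is a nonzero graded ideal of $\bigslant{\C^{\alpha}G}{N}$. Consequently the quotient grading is graded-simple (see Definition~\ref{def:simplygraded}) precisely when this canonical decomposition has a single term: two or more orbits produce a proper graded ideal, whereas a single orbit means that $[\mathcal{G}]$ is itself that lone simply-graded summand. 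Since the number of summands equals the number of orbits, this already yields the stated equivalence.

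For the ``in this case'' part I would assume transitivity, so that the orbit representatives $\Omega=\{[M]\}$ form a single orbit. The equality of the $\C$-dimensions of all irreducible $\C^{\alpha}N$-modules is then immediate from Lemma~\ref{samedim}, since they all lie in this one orbit (alternatively, it follows from the equi-dimensional feature exploited in step~(3) of the proof of Theorem A). To obtain the numerical identity \eqref{eq:GNdivN}, I would specialise the dimension count \eqref{dimN} to the one-orbit situation, reading it as $|N|=\dim_{\C}(M)^2\cdot|T_M|$, and then combine it with the orbit--stabiliser relation \eqref{orbstabeq}, namely $|T_M|\cdot|\mathcal{I}_{\mathcal{G}}(M)|=|G|/|N|$. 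Eliminating $|T_M|$ between these two equations gives $|N|^2\cdot|\mathcal{I}_{\mathcal{G}}(M)|=\dim_{\C}(M)^2\cdot|G|$, as claimed.

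I do not anticipate a genuine obstacle here; the one point meriting explicit mention is the equivalence between ``graded-simple'' and ``a single term in the decomposition of Theorem~\ref{AW}'', which rests only on the fact that each summand produced by Theorem A is a nonzero graded ideal. All the remaining arithmetic is already contained in steps~(1)--(3) of the proof of Theorem A, so the write-up should amount to only a few lines.
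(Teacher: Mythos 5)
Your proposal is correct and follows essentially the same route as the paper: both establish the equivalence by identifying graded-simplicity with $|\Omega|=1$ via Theorem A (the paper cites the explicit decomposition \eqref{transgrquotient}, you cite part~(1) of Theorem~A, which is the same content), both derive the dimension equality from Lemma~\ref{samedim}, and both obtain \eqref{eq:GNdivN} by combining \eqref{orbstabeq} with \eqref{dimN} specialized to a single orbit. The only difference is that you spell out the elimination of $|T_M|$, which the paper leaves implicit.
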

\begin{proof}
    By \eqref{transgrquotient}, graded simplicity of the quotient grading and transitivity of the $G/N$-action are both equivalent to $|\Omega|=1$.
    The equality of dimensions follows from Lemma \ref{samedim}, and \eqref{eq:GNdivN} follows from \eqref{orbstabeq} and \eqref{dimN} for $|\Omega|=1$.
\end{proof}
\section{Non-degenerate twisted group algebra grading classes and their quotients}\label{cpq}
We move to algebras of matrices $M_n(\C)$.
As earlier explained, these algebras can be endowed with TGA gradings $\C^{\alpha}G$, for non-degenerate $[\alpha]\in H^2(G,\C^*)$ over a CT group $G$ of order $n^2$.
Imposing the non-degeneracy condition on $[\alpha]\in H^2(G,\C^*)$ in \eqref{transgrquotient} for $N\lhd G$,
we get that under the action \eqref{act} there is one $G/N$-orbit of Irr$(\C ^{\alpha}N)$ ( i.e. $|\Omega|=1$).
The fine part of this $G/N$-grading, that is $\C^{\omega_{[M]}}\mathcal{I}_{\mathcal{G}}(M)$ is a matrix algebra as well.
We obtain the following consequence, which can also be deduced directly from the classical Mackey's correspondence \cite[Theorem 8.3]{M58}.
\begin{corollary}\label{elqu}
    Let $[\alpha]\in H^2(G,\C^*)$ be non-degenerate, let $N\lhd G$ be any normal subgroup, and let $[M]\in$Irr$(\C ^{\alpha}N)$.
    Then with the above notation, the obstruction class $\omega_{\mathcal{G}}([M])\in H^2(\mathcal{I}_{\mathcal{G}}(M),\C^*)$ is non-degenerate.
    In particular, the inertia group $\mathcal{I}_{\mathcal{G}}(M)$ is of CT.
\end{corollary}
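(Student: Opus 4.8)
The plan is to read the obstruction class off the explicit form of the quotient grading supplied by Theorem A, and then to note that graded induction cannot turn a non-simple algebra into a simple one. Since $[\alpha]$ is non-degenerate, $\C^{\alpha}G\cong M_n(\C)$ with $n=\sqrt{|G|}$, which is a \emph{simple} algebra. Hence the quotient $G/N$-grading $\mathcal{G}=\bigslant{\C^{\alpha}G}{N}$ is a grading of a simple algebra, and since every graded ideal is in particular a two-sided ideal, $\mathcal{G}$ has no proper graded ideals, i.e.\ it is graded-simple. By Theorem A(1) --- equivalently, by Corollary \ref{transit} --- this forces the $G/N$-action \eqref{act} on $\Irr(\C^{\alpha}N)$ to be transitive, so $\Irr(\C^{\alpha}N)$ is the single orbit of $[M]$ and $|\Omega|=1$.

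Next I would apply Theorem A(2) to this one orbit: choosing any $2$-cocycle $\omega$ in the Mackey obstruction class $\omega_{\mathcal{G}}([M])\in H^2(\mathcal{I}_{\mathcal{G}}(M),\C^*)$, it gives a $G/N$-graded isomorphism
$$\bigslant{\C^{\alpha}G}{N}\ \stackrel{G/N}{\cong}\ x_M\bigl(\C^{\omega}\mathcal{I}_{\mathcal{G}}(M)\bigr),\qquad x_M=\dim_{\C}(M)\cdot\sum_{t\in T_M}t\in\N[G/N].$$
Forgetting the grading, the right-hand side is the matrix algebra $M_{\epsilon(x_M)}\bigl(\C^{\omega}\mathcal{I}_{\mathcal{G}}(M)\bigr)$ over the twisted group algebra $\C^{\omega}\mathcal{I}_{\mathcal{G}}(M)$, while the left-hand side is $\C^{\alpha}G\cong M_n(\C)$, which is simple. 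Now $\C^{\omega}\mathcal{I}_{\mathcal{G}}(M)$ is semisimple by the generalized Maschke theorem, so were it not simple it would split into a direct sum of at least two matrix algebras, and then so would $M_{\epsilon(x_M)}$ over it, contradicting simplicity of $M_n(\C)$. Therefore $\C^{\omega}\mathcal{I}_{\mathcal{G}}(M)$ is simple; that is, $\omega_{\mathcal{G}}([M])$ is non-degenerate, and in particular $\mathcal{I}_{\mathcal{G}}(M)$, now carrying a non-degenerate cohomology class, is of CT (whence $|\mathcal{I}_{\mathcal{G}}(M)|$ is a perfect square and the unique element of $\Irr(\C^{\omega}\mathcal{I}_{\mathcal{G}}(M))$ has dimension $\sqrt{|\mathcal{I}_{\mathcal{G}}(M)|}$).

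This is essentially bookkeeping on top of Theorem A, so I do not expect a real obstacle. The two points that merit a sentence of care are: that the hypotheses of Theorem \ref{let} hold --- the finite-dimensional $\C^{\alpha}N$-module $M$ is absolutely irreducible over $\C$ --- so that $\omega_{\mathcal{G}}([M])$ is defined at all; and the closing structural step, that induction \emph{reflects} simplicity, i.e.\ $M_r(\mathcal{B}_1\oplus\mathcal{B}_2)\cong M_r(\mathcal{B}_1)\oplus M_r(\mathcal{B}_2)$ is a genuine nontrivial decomposition. A more computational route would bypass the structure theory altogether: combine the dimension identity \eqref{eq:GNdivN} of Corollary \ref{transit}, the equality of dimensions within one orbit (Lemma \ref{samedim}), and the induction-dimension rule \eqref{augdim} to conclude that $\C^{\omega}\mathcal{I}_{\mathcal{G}}(M)$ has a single irreducible module, of dimension $\sqrt{|\mathcal{I}_{\mathcal{G}}(M)|}$, which is exactly the non-degeneracy of $\omega_{\mathcal{G}}([M])$; but the simplicity argument above is shorter.
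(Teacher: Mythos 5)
Your argument is correct and matches the paper's route: impose non-degeneracy on $[\alpha]$ in the decomposition of Theorem A, conclude $|\Omega|=1$ from simplicity of $\C^{\alpha}G\cong M_n(\C)$, and read off that the fine part $\C^{\omega}\mathcal{I}_{\mathcal{G}}(M)$ must itself be a matrix algebra. The paper asserts the last step without justification (``\ldots is a matrix algebra as well''); you supply the missing reason, namely that $M_r(\mathcal{B}_1\oplus\mathcal{B}_2)\cong M_r(\mathcal{B}_1)\oplus M_r(\mathcal{B}_2)$ would contradict simplicity of the ambient algebra, and you also correctly note that absolute irreducibility of $M$ (hence applicability of Theorem~\ref{let}) is automatic over $\C$ by Schur's lemma.
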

Notice that if both $[\alpha]\in H^2(G,\C^*)$ and its restriction to $N\lhd G$ are non-degenerate, then the unique element $[M]\in$Irr$(\C ^{\alpha}N)$ is stabilized by the entire quotient.
By Corollary \ref{elqu}, $G/N$ is of CT which proves Corollary B.

Any grading of $M_n(\C)$, in particular any quotient grading of non-degenerate TGA gradings, is obviously graded-simple.
Using Lemma \ref{samedim} and Corollary \ref{transit} for non-degenerate $[\alpha]\in H^2(G,\C^*)$ we deduce

\begin{lemma}\label{abelag}\cite[Lemma 2.2]{david2013isotropy}
Let $[\alpha]\in H^2(G,\C^*)$ be a non-degenerate cohomology class and let $N\lhd G$.
Then the $G/N$-action \eqref{act} on Irr$(\C ^{\alpha}N)$ is transitive. In particular, all the irreducible $\C^{\alpha}N$-modules are of the same dimension.
If $N\lhd G$ is further assumed to be isotropic with respect to $[\alpha]$, then all the irreducible $\C^{\alpha}N$-modules are 1-dimensional,
that is $N$ is abelian.
\end{lemma}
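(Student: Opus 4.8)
The plan is to deduce the transitivity statement from Corollary~\ref{transit} and then, in the isotropic case, to exploit the presence of a linear character of $N$.

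First I would record that, $[\alpha]$ being non-degenerate, the twisted group algebra $\C^{\alpha}G$ is isomorphic to the matrix algebra $M_n(\C)$ with $n=\sqrt{|G|}$, which is simple; therefore every grading of it --- in particular the quotient $G/N$-grading $\mathcal{G}=\bigslant{\C^{\alpha}G}{N}$ of the twisted $G$-grading --- admits no non-trivial graded ideals, i.e. it is graded-simple. Corollary~\ref{transit} then applies verbatim: graded-simplicity of the quotient $G/N$-grading is equivalent to transitivity of the $G/N$-action \eqref{act} on $\Irr(\C^{\alpha}N)$, which is the first assertion. For the ``in particular'' I would invoke Lemma~\ref{samedim}, by which an orbit well-determines a common dimension of its members; since there is a single orbit, all irreducible $\C^{\alpha}N$-modules share one dimension $d$ (whose value is moreover visible from \eqref{eq:GNdivN}).

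For the last assertion, suppose further that $N$ is $[\alpha]$-isotropic, so that the restriction of $[\alpha]$ to $N$ vanishes in $H^2(N,\C^*)$ and hence $\C^{\alpha}N\cong\C N$ as $\C$-algebras. The ordinary group algebra $\C N$ carries the trivial one-dimensional module, so $\C^{\alpha}N$ has a one-dimensional irreducible; by the equidimensionality just established, \emph{every} irreducible $\C^{\alpha}N$-module is then one-dimensional. A finite-dimensional semisimple $\C$-algebra all of whose simple modules are one-dimensional is commutative, so $\C^{\alpha}N$ is commutative; reading this off the homogeneous basis $\{u_g\}_{g\in N}$, the identities $\alpha(g,h)u_{gh}=u_gu_h=u_hu_g=\alpha(h,g)u_{hg}$ force $gh=hg$ for all $g,h\in N$, i.e. $N$ is abelian.

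I do not anticipate a genuine obstacle here: the argument is a short chain of facts established earlier, the only substantive inputs being Corollary~\ref{transit} --- which converts the evident graded-simplicity of $\bigslant{\C^{\alpha}G}{N}$ into transitivity of the action --- together with the remark that isotropy of $N$ turns $\C^{\alpha}N$ into an ordinary group algebra, thereby furnishing the linear character whose dimension the whole (now single) orbit must share.
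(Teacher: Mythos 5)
Your proposal is correct and takes essentially the same route as the paper: the text preceding Lemma~\ref{abelag} explicitly notes that the quotient grading of $M_n(\C)$ is graded-simple and invokes Corollary~\ref{transit} and Lemma~\ref{samedim} to obtain transitivity and equidimensionality, and for the isotropic case the paper relies on the same observation (cf.\ the remark after Lemma~\ref{ECPquot} citing Higgs) that vanishing of $\mathrm{res}^G_N[\alpha]$ forces all irreducibles of $\C^{\alpha}N\cong\C N$ to be one-dimensional, hence $N$ abelian. You merely spell out the final deduction (commutativity of $\C^{\alpha}N$ read off the homogeneous basis) which the paper leaves implicit; the argument is sound.
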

What about elementary quotients? By \eqref{transgrquotient} for non-degenerate $[\alpha]\in H^2(G,\C^*)$,
a quotient $G/N$-grading $[\bigslant{\C^{\alpha}G}{N}]$ is elementary if and only if it admits no stabilizers under \eqref{act}. This fact, along with Lemma \ref{abelag}, yields
\begin{lemma}\label{elemetaryquotient}
    Let $[\alpha]\in H^2(G,\C^*)$ be non-degenerate.
    Then a quotient $G/N$-grading class $[\bigslant{\C^{\alpha}G}{N}]$ is elementary if and only if $G/N$ acts freely on Irr$(\C ^{\alpha}N)$ via \eqref{act}.
In this case, since the action is also transitive, we have $|$Irr$(\C ^{\alpha}N)|=|G/N|$, and
    all the irreducible $\C^{\alpha}N$-modules are of the same dimension given by
    \begin{equation}\label{GNdivN}
    \dim_{\C} (M)^2=\frac{|N|}{|\text{Irr} (\C ^{\alpha}N)|}=\frac{|N|^2}{|G|}, \ \ \forall[M] \in \text{Irr}(\C ^{\alpha}N).
    \end{equation}
\end{lemma}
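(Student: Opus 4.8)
The plan is to read the lemma off Theorem~A, using two facts that hold because $[\alpha]$ is non-degenerate: the quotient is graded-simple (it is a grading of the simple algebra $M_n(\C)$), and the $G/N$-action \eqref{act} on $\Irr(\C^\alpha N)$ is transitive (Lemma~\ref{abelag}, equivalently Corollary~\ref{transit}). By Theorem~A, $[\bigslant{\C^{\alpha}G}{N}]$ is a direct sum of simply-graded summands indexed by the $G/N$-orbits on $\Irr(\C^\alpha N)$, the summand attached to $[M]$ being $[x_M(\C^{\omega_{[M]}}\mathcal{I}_{\mathcal{G}}(M))]$, induced over the inertia group from its Mackey obstruction class. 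Transitivity forces $|\Omega|=1$, so $[\bigslant{\C^{\alpha}G}{N}]=[x_M(\C^{\omega_{[M]}}\mathcal{I}_{\mathcal{G}}(M))]$ for the unique orbit representative $[M]$.

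First I would prove the equivalence. A grading class of $M_n(\C)$ is elementary precisely when it is induced from the trivial grading of $\C$, and since the elementary and fine parts of a graded-simple class are determined up to equivalence (\cite[Theorem 2.20]{GS16}), the above decomposition shows $[\bigslant{\C^{\alpha}G}{N}]$ is elementary iff its fine part $\C^{\omega_{[M]}}\mathcal{I}_{\mathcal{G}}(M)$ is the trivially graded $\C$, i.e.\ iff $\mathcal{I}_{\mathcal{G}}(M)=\{e\}$; this is exactly the reformulation, noted earlier, that elementarity of the quotient is equivalent to the action \eqref{act} having no non-trivial stabilizers. Because the action is transitive, all point-stabilizers are conjugate, so the stabilizer of $[M]$ is trivial iff every stabilizer is trivial, that is, iff $G/N$ acts freely on $\Irr(\C^\alpha N)$. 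This is the asserted ``if and only if''.

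For the numerical part, assume $G/N$ acts freely, hence also transitively. The orbit--stabilizer relation \eqref{orbstabeq} with $|\mathcal{I}_{\mathcal{G}}(M)|=1$ gives $|G/N|=|T_M|$, and since there is a single orbit $|T_M|$ is precisely $|\Irr(\C^\alpha N)|$, so $|\Irr(\C^\alpha N)|=|G/N|=|G|/|N|$. Finally, setting $|\mathcal{I}_{\mathcal{G}}(M)|=1$ in \eqref{eq:GNdivN} of Corollary~\ref{transit} yields $\dim_\C(M)^2=|N|^2/|G|$, which coincides with $|N|/|\Irr(\C^\alpha N)|$ by the count just obtained; equivalently, this value reads off directly from \eqref{dimN} using $|\Omega|=1$ and Lemma~\ref{samedim} (equal dimensions within an orbit).

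I do not anticipate a genuine obstacle: the statement is a packaging of Theorem~A, the characterization of elementary gradings, and orbit--stabilizer. The only point requiring care is the first step of the equivalence --- that ``elementary'' for the simply-graded quotient is genuinely the same condition as ``inertia group trivial'', not merely implied by it --- which is why one must invoke the uniqueness, up to equivalence, of the elementary and fine parts of a graded-simple class.
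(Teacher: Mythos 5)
Your argument is correct and follows the same route the paper takes: the paper derives the lemma from the decomposition \eqref{transgrquotient} (Theorem~A) together with the transitivity supplied by Lemma~\ref{abelag}, exactly as you do, and the dimension formula is orbit--stabilizer plus \eqref{dimN}. Your extra remark invoking \cite[Theorem 2.20]{GS16} to pin down that ``elementary'' is equivalent to (not merely implied by) triviality of the fine part is a reasonable elaboration of what the paper leaves implicit.
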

\begin{corollary}\label{cor:squarefreeelem1}
    Let $[\mathcal{G}]:=[\C^{\alpha}G]$ be a TGA grading class of a complex matrix algebra, that is $[\alpha]\in  H^2(G,\C ^*)$ is non-degenerate, let $N\lhd G$ and let $[M]\in$Irr$(\C ^{\alpha}N)$.
    Suppose that $|G|$ is cube-free.
    Then the quotient grading class $[\bigslant{\mathcal{G}}{N}]$ is elementary if and only if $|G/N|$ is square-free.
\end{corollary}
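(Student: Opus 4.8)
The strategy is to combine the dimension formula \eqref{GNdivN} of Lemma \ref{elemetaryquotient} with elementary divisibility arithmetic under the cube-free hypothesis on $|G|$. First I would record the standing data: since $[\alpha]$ is non-degenerate, $G$ is a CT group, so $|G|$ is a perfect square, and by Lemma \ref{abelag} all irreducible $\C^{\alpha}N$-modules share a common dimension, call it $m=\dim_{\C}(M)$; moreover $m^2$ divides $|N|$ (indeed $\C^{\alpha}N$ is semisimple with all simple summands of dimension $m$). Write $|N|=m^2 k$, where $k=|\text{Irr}(\C^{\alpha}N)|$ is the number of orbit representatives, which here is a single orbit of size $k$ under the transitive $G/N$-action.

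For the ``if'' direction, suppose $|G/N|$ is square-free. The key is \eqref{eq:GNdivN} (equivalently the reformulation in \eqref{GNdivN}): $|N|^2 \cdot |\mathcal{I}_{\mathcal{G}}(M)| = m^2\cdot |G|$. Since the quotient grading of $M_n(\C)$ is automatically graded-simple, the action of $G/N$ on $\text{Irr}(\C^{\alpha}N)$ is transitive by Lemma \ref{abelag}; I must show it is in fact free, i.e. $\mathcal{I}_{\mathcal{G}}(M)=\{e\}$, and then invoke Lemma \ref{elemetaryquotient}. From \eqref{eq:GNdivN}, $|\mathcal{I}_{\mathcal{G}}(M)| = m^2|G|/|N|^2$. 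Using $|N|=m^2k$ this becomes $|\mathcal{I}_{\mathcal{G}}(M)| = |G|/(m^2 k^2)$, and since $|G/N|=|G|/|N| = |G|/(m^2k)$ is square-free while $k \mid |G/N|$ (orbit-stabilizer: $k\cdot|\mathcal{I}_{\mathcal{G}}(M)| = |G/N|$), I can push the divisibility: the prime factorization of $|G|$ has all exponents $\le 2$, and $m^2$ contributes even exponents, $k^2$ contributes even exponents, so $|G|/(m^2k^2)$ having all exponents $\le 2$ combined with $|G/N|=|G|/(m^2k)$ square-free forces $|\mathcal{I}_{\mathcal{G}}(M)|=1$. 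I would spell this prime-by-prime: for a prime $p$ with $v_p(|G|)\in\{0,1,2\}$, square-freeness of $|G/N|$ says $v_p(|G|) - 2v_p(m) - v_p(k) \le 1$, and one checks the only consistent possibility leaving $v_p(|G|)-2v_p(m)-2v_p(k)\ge 0$ is $v_p(k)=v_p(\mathcal{I})=0$ at every $p$ dividing $|G/N|$.

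For the ``only if'' direction, suppose $[\bigslant{\mathcal{G}}{N}]$ is elementary. Then by Lemma \ref{elemetaryquotient} the action is free, $|\text{Irr}(\C^{\alpha}N)|=|G/N|=k$, and \eqref{GNdivN} gives $m^2 = |N|^2/|G|$, hence $|G/N|^2 = (|N|/|G|\cdot|N|)\cdot \dots$ — more cleanly, $|G| = |N|^2/m^2$, so $|G/N| = |G|/|N| = |N|/m^2$ and thus $|G| = |G/N|\cdot|N| = |G/N|^2 m^2$. Now I read exponents: $v_p(|G|) = 2v_p(|G/N|) + 2v_p(m)$ is even for every $p$ (consistent with $G$ being CT), and since $v_p(|G|)\le 2$ by the cube-free hypothesis, we get $2v_p(|G/N|) + 2v_p(m) \le 2$, forcing $v_p(|G/N|)\le 1$ for all $p$; that is exactly the assertion that $|G/N|$ is square-free.

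The main obstacle I anticipate is bookkeeping the three-way divisibility among $|G|$, $|N|$, $m$ and $k$ correctly — in particular making sure the orbit-stabilizer relation $k\cdot|\mathcal{I}_{\mathcal{G}}(M)| = |G/N|$ is used in tandem with \eqref{eq:GNdivN}/\eqref{GNdivN} rather than redundantly, and handling the fact that $m$ need not be $1$ on the nose (only after we conclude the action is free). Everything else is a short arithmetic argument once the cube-free hypothesis pins every $v_p(|G|)$ to $\{0,1,2\}$ and the CT property pins it to $\{0,2\}$.
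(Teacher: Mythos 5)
Your ``only if'' direction is essentially the paper's: from \eqref{GNdivN} one reads off that $|G/N|^2$ divides $|G|$ (equivalently, $|G/N|$ divides $|N|$), and cube-freeness of $|G|$ then forces $|G/N|$ to be square-free. Same mechanism, same arithmetic.

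For the ``if'' direction the paper takes a shorter route than the one you sketch. By Corollary~\ref{elqu}, the inertia group $\mathcal{I}_{\mathcal{G}}(M)<G/N$ is of central type, hence of square order; since it sits inside the square-free group $G/N$ it must be trivial, and Lemma~\ref{elemetaryquotient} finishes. Your prime-by-prime bookkeeping can be made to reach the same conclusion, but the key claim as written --- ``the only consistent possibility \ldots\ is $v_p(k)=v_p(\mathcal{I})=0$ at every $p$ dividing $|G/N|$'' --- is false under the constraints you actually list (namely $v_p(|G|)\le 2$, $v_p(|G|)-2v_p(m)-v_p(k)=1$, and $v_p(|G|)-2v_p(m)-2v_p(k)\ge 0$). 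The triple $(v_p(|G|),v_p(m),v_p(k))=(1,0,0)$ satisfies all three and yields $v_p(|\mathcal{I}_{\mathcal{G}}(M)|)=1\neq 0$, while the triple $(2,0,1)$, which is what actually occurs, has $v_p(k)=1\neq 0$. To close the gap you must use that $v_p(|G|)$ is \emph{even} (because $G$ is CT, a fact you note in passing but do not feed into this step); then cube-freeness gives $v_p(|G|)\in\{0,2\}$, and at a prime $p$ dividing the square-free $|G/N|$ one is forced to $v_p(|G|)=2$, $v_p(m)=0$, $v_p(k)=1$, hence $v_p(|\mathcal{I}_{\mathcal{G}}(M)|)=0$. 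That repair works, but notice it is in effect a prime-by-prime rederivation of the simple group-theoretic fact the paper invokes wholesale through Corollary~\ref{elqu}: a CT subgroup of a square-free group must be trivial.
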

\begin{proof}
    Assume that the quotient grading class $[\bigslant{\mathcal{G}}{N}]$ of $[\mathcal{G}]$ is elementary. Then by~\eqref{GNdivN}, $|G/N|$ divides
    $|N|$. Therefore, since $|G|=|N|\cdot|G/N|$ is cube-free then $|G/N|$ is
    square-free. Conversely, suppose that $|G/N|$ is square-free. The quotient grading class $\left[\bigslant{\mathcal{G}}{N}\right]$ is induced from $\C^{\omega([M])}
    \mathcal{I}_{\mathcal{G}}(M)$, where by Lemma~\ref{elqu} the inertia $\mathcal{I}_{\mathcal{G}}(M)<G/N$ is of CT.
    Since a group of CT is of square order and since the order of $G/N$ is square-free, its only subgroup of CT is $\{e\}$.
    Consequently, the quotient grading class $[\bigslant{\mathcal{G}}{N}]$ is elementary.
\end{proof}
We remind that an elementary crossed product is an elementary grading of the matrix algebra whose character is the sum of all the elements in the grading group.
By \eqref{augdim}, the dimension of such elementary crossed product is obtained by squaring the order of the grading group.
Therefore, if an elementary crossed product is a quotient $G/N$-grading of a TGA grading $\C^{\alpha}G$ of dimension $|G|$, then comparing dimension we deduce that
\begin{equation}\label{NGN}
|N|=|G/N|\left(=\sqrt{|G|}\right).
\end{equation}
\begin{lemma}\label{alemma}
Let $N\lhd G$ and let $[\alpha]\in H^2(G,\C^*)$ be non-degenerate. Suppose that the quotient $\left[\bigslant{\C^{\alpha}G}{N}\right]$ is elementary.
Then this quotient is an elementary crossed product grading class if and only if \eqref{NGN} holds.
\end{lemma}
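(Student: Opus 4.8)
The plan is to read the shape of an elementary quotient off Theorem A and then match it against the definition of an elementary crossed product; this reduces the whole statement to the numerical equivalence $\dim_\C(M)=1$ if and only if $|N|=|G/N|$.

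Write $n:=\sqrt{|G|}$, so that $\C^\alpha G\cong M_n(\C)$ by non-degeneracy of $[\alpha]$, and fix $[M]\in\Irr(\C^\alpha N)$. Since $\bigslant{\C^\alpha G}{N}$ is a grading of the simple algebra $M_n(\C)$, it is graded-simple, so the $G/N$-action \eqref{act} on $\Irr(\C^\alpha N)$ is transitive by Corollary \ref{transit}; hence there is a single orbit. The hypothesis that the quotient is elementary then says, via Lemma \ref{elemetaryquotient}, that the action is moreover free, i.e. $\mathcal{I}_\mathcal{G}(M)=\{e\}$, and that $\dim_\C(M)^2=|N|^2/|G|$ (equation \eqref{GNdivN}). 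Feeding $\mathcal{I}_\mathcal{G}(M)=\{e\}$ into Theorem A --- so that the fine part is $\C$ and $T_M$ may be taken to be all of $G/N$ --- gives
\[
\left[\bigslant{\C^\alpha G}{N}\right]=\left[\Bigl(\dim_\C(M)\cdot\sum_{\bar g\in G/N}\bar g\Bigr)(\C)\right].
\]

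Now suppose \eqref{NGN} holds. Together with $|N|\cdot|G/N|=|G|=n^2$ this forces $|N|=|G/N|=n$, and then $\dim_\C(M)^2=|N|^2/|G|=1$, so $\dim_\C(M)=1$. Consequently the element responsible for the induction above is exactly $\sum_{\bar g\in G/N}\bar g\in\N[G/N]$, and since $|G/N|=n$ the quotient class is the elementary crossed product $G/N$-grading class $\bigl[\sum_{\bar g\in G/N}\bar g(\C)\bigr]$ of $M_n(\C)$. Conversely, suppose the quotient is an elementary crossed product grading class. Its base algebra --- the identity component --- is then the diagonal subalgebra of $M_n(\C)$, of dimension $n$; but the base algebra of $\bigslant{\C^\alpha G}{N}$ is $\C^\alpha N$, of dimension $|N|$. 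Since a graded equivalence carries identity component to identity component, $|N|=n$, and therefore $|G/N|=|G|/|N|=n^2/n=n=|N|$, which is \eqref{NGN}.

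Most of this is bookkeeping once Theorem A and Lemma \ref{elemetaryquotient} are in hand; the step needing the most care is the converse, where one must use that an elementary crossed product grading of $M_n(\C)$ is by definition a grading by a group of order exactly $n=\sqrt{|G|}$ --- equivalently, that its identity component has dimension $n$ --- so that recognizing the quotient as such a class already pins down $|N|=n$, and with it \eqref{NGN}. Alternatively, the appeal to Theorem A in the ``if'' direction can be replaced by equi-dimensionality: the quotient is equi-dimensional (being a quotient of the twisted, hence crossed product, grading $\C^\alpha G$), and by Theorem \ref{th:equidim} together with \eqref{respind} an equi-dimensional elementary grading of $M_n(\C)$ by a group of order $n$ must have all structure constants equal to $1$, i.e. be the elementary crossed product.
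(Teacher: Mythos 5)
Your proof is correct and tracks the paper's own reasoning closely. The paper does not present a formal proof block for this lemma; the ``only if'' direction is established in the paragraph immediately preceding the statement by comparing the total dimension $|G/N|^2$ of an elementary crossed product against $\dim_\C(\C^\alpha G)=|G|$, yielding \eqref{NGN}, and the ``if'' direction is left to the reader as a consequence of Theorem A and Lemma \ref{elemetaryquotient}, exactly as you carry it out. The one genuine (minor) variation is in your converse: you compare the \emph{base algebra} dimensions ($|N|$ versus $n$, the dimension of the diagonal), whereas the paper compares \emph{total} algebra dimensions; both are valid, and your version has the small virtue of not needing to invoke \eqref{augdim} separately. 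Your ``if'' direction correctly reads off from \eqref{transgrquotient} (with $|\Omega|=1$ and $\mathcal{I}_\mathcal{G}(M)=\{e\}$) that the inducing element is $\dim_\C(M)\cdot\sum_{\bar g\in G/N}\bar g$, and then uses \eqref{GNdivN} plus $|N|=|G/N|$ to force $\dim_\C(M)=1$, which pins down the elementary crossed product. No gaps.
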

Relying on Lemmas \ref{elemetaryquotient} and \ref{alemma}, the following result can be considered as a primal answer to Question \ref{questiona}.
\begin{lemma}\label{ECPquot}
    Let $[\alpha]\in H^2(G,\C^*)$ be non-degenerate.
    Then the quotient $[\bigslant{\C^{\alpha}G}{N}]$ is an elementary crossed product grading class if and only if
    \begin{enumerate}
\item   $G/N$ acts freely on Irr$(\C ^{\alpha}N)$ via \eqref{act}, and
\item $|N|=|G/N|$.
\end{enumerate}
\end{lemma}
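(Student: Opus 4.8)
The plan is to obtain this as an immediate synthesis of the two preceding lemmas, Lemma \ref{elemetaryquotient} and Lemma \ref{alemma}, the only extra ingredient being the definitional observation that an elementary crossed product grading class is in particular an elementary grading class (it is induced from the trivial grading on $\C$ by $\sum_{\gamma}\gamma$, hence is elementary by the discussion around \eqref{maxinduceelement}).

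For the forward implication I would argue as follows. Assume $\left[\bigslant{\C^{\alpha}G}{N}\right]$ is an elementary crossed product grading class. By the observation above it is an elementary quotient grading class, so Lemma \ref{elemetaryquotient} applies and yields condition (1): $G/N$ acts freely on $\Irr(\C^{\alpha}N)$ via \eqref{act}. Knowing now that the quotient is elementary, the hypothesis of Lemma \ref{alemma} is met, and that lemma says an elementary quotient is an elementary crossed product precisely when \eqref{NGN} holds; since by assumption it is an elementary crossed product, we get $|N|=|G/N|$, which is condition (2).

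For the converse I would assume (1) and (2). Hypothesis (1) together with Lemma \ref{elemetaryquotient} shows that $\left[\bigslant{\C^{\alpha}G}{N}\right]$ is an elementary grading class. With that established, Lemma \ref{alemma} applies, and since condition (2) is exactly the equality \eqref{NGN}, that lemma concludes that the quotient is an elementary crossed product grading class. I do not expect any genuine obstacle here: all the substantive work — the decomposition \eqref{transgrquotient}, the freeness/transitivity analysis of Lemma \ref{abelag} and Lemma \ref{elemetaryquotient}, and the dimension bookkeeping behind \eqref{NGN} and Lemma \ref{alemma} — has already been done. The one point deserving a line of care is that Lemma \ref{alemma} has two prerequisites (the quotient being elementary, and \eqref{NGN}); in the forward direction the first is automatic from the definition of an elementary crossed product, while in the backward direction it must be supplied from hypothesis (1) via Lemma \ref{elemetaryquotient} before Lemma \ref{alemma} can be invoked.
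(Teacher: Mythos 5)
Your proposal is correct and follows exactly the route the paper intends: the paper introduces Lemma \ref{ECPquot} with the phrase ``Relying on Lemmas \ref{elemetaryquotient} and \ref{alemma},'' and your synthesis—using Lemma \ref{elemetaryquotient} to translate freeness into elementariness and Lemma \ref{alemma} to translate \eqref{NGN} into the crossed-product property—is precisely that reliance, with the definitional observation supplied correctly.
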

Since the action \eqref{act} is also transitive under the non-degeneracy assumption (Lemma \ref{abelag}),
(1) particularly yields that $|G/N|=$Irr$(\C ^{\alpha}N)|$. Thus, (2) can be replaced in Lemma \ref{ECPquot} with the condition
$$(2')\ \ \ \  |N|=|\text{Irr}(\C ^{\alpha}N)|,$$
which occurs exactly when all the irreducible $\C^{\alpha}N$-modules are 1-dimensional, that is if and only if $N$ is abelian and the restriction of the cohomology class $[\alpha]$
to $N$ is trivial (see \cite[Lemma 1.2(i)]{Higgs88}).

Notice that if $[\bigslant{\C^{\alpha}G}{N}]$ is an elementary grading then by~\eqref{GNdivN}, $|G|$ is a divisor of $|N|^2$.
An extreme case is an elementary crossed product in which, by Lemma~\ref{alemma}, $|G|=|N|^2$. Using the notation \eqref{poset} we conclude
\begin{corollary}\label{cor:CPmax}
Any elementary crossed product quotient grading class of a TGA grading class $\C^{\alpha}G$ of $M_n(\C)$ is a maximal elementary quotient grading class of this TGA grading class.
\end{corollary}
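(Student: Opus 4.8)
The plan is to reduce the statement to a short numerical squeeze on the orders of grading groups. Fix the twisted grading class $[\mathcal{G}]:=[\C^{\alpha}G]$ of $M_n(\C)$, so $[\alpha]\in H^2(G,\C^*)$ is non-degenerate and $|G|=n^2$. The first observation I would record is that an elementary crossed product quotient grading class $[\bigslant{\mathcal{G}}{N}]$ of $[\mathcal{G}]$ is graded by a group of order exactly $n$: an elementary crossed product by a group $\Gamma$ has dimension $|\Gamma|^2$ by \eqref{augdim}, which must be $\dim_{\C}M_n(\C)=n^2$; equivalently, $|N|=|G/N|=\sqrt{|G|}=n$ by \eqref{NGN}. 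The complementary fact is that $n$ is the \emph{largest} possible order of a grading group of an elementary quotient of $[\mathcal{G}]$: if $[\bigslant{\mathcal{G}}{M}]$ is any elementary quotient class, then by Lemma \ref{elemetaryquotient} (formula \eqref{GNdivN}) every irreducible $\C^{\alpha}M$-module $W$ satisfies $\dim_{\C}(W)^2=|M|^2/|G|$, so $|G|$ divides $|M|^2$; writing $|G|=|M|\cdot|G/M|$ gives $|G/M|\le|M|$, hence $|G/M|^2\le|G|=n^2$, i.e.\ $|G/M|\le n$.

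With these in hand I would carry out the comparison. Let $[\mathcal{H}]\in[\mathcal{G}]_{\C}$ satisfy $[\bigslant{\mathcal{G}}{N}]\le[\mathcal{H}]$; the aim is to show $[\mathcal{H}]=[\bigslant{\mathcal{G}}{N}]$, which is precisely maximality of $[\bigslant{\mathcal{G}}{N}]$ in the poset $[\mathcal{G}]_{\C}$. Since $[\mathcal{H}]\le[\mathcal{G}]$ and every quotient grading class of a group-graded algebra arises by modding out a normal subgroup of its grading group (see the discussion in \S\ref{intro}), write $[\mathcal{H}]=[\bigslant{\mathcal{G}}{M}]$ with $M\lhd G$, graded by $G/M$; by the first paragraph $|G/M|\le n$. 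On the other hand, the relation $[\bigslant{\mathcal{G}}{N}]\le[\bigslant{\mathcal{G}}{M}]$ is realized by a quotient morphism whose underlying group homomorphism is a surjection from the grading group of $[\bigslant{\mathcal{G}}{M}]$ onto that of $[\bigslant{\mathcal{G}}{N}]$. Both of these are connected gradings (they are quotients of the connected twisted grading $\C^{\alpha}G$, whose support is all of $G$), so their grading groups are genuinely $G/M$ and $G/N$, and we obtain a surjection $G/M\twoheadrightarrow G/N$; therefore $n=|G/N|\le|G/M|\le n$, forcing $|G/M|=n$ and making the surjection an isomorphism of finite groups.

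To finish, a quotient morphism whose underlying group homomorphism is an isomorphism is, by definition, a graded equivalence, so $[\bigslant{\mathcal{G}}{M}]=[\bigslant{\mathcal{G}}{N}]$, i.e.\ $[\mathcal{H}]=[\bigslant{\mathcal{G}}{N}]$; hence no element of $[\mathcal{G}]_{\C}$ lies strictly above $[\bigslant{\mathcal{G}}{N}]$, which is the asserted maximality. I expect the only genuinely delicate point to be the bookkeeping in the middle step: one must keep ``maximal'' read inside the subposet $[\mathcal{G}]_{\C}$ of elementary quotients (rather than among all quotients of $[\mathcal{G}]$), and one must be comparing the orders of the \emph{actual} grading groups of the classes involved — which is legitimate precisely because twisted gradings and their quotients are connected, so their grading groups are recovered from the supports. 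Everything else is the two cited lemmas together with the squeeze $n\le|G/M|\le n$.
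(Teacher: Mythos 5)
Your argument is correct and is essentially the same as the paper's (which is only sketched in the discussion preceding the corollary): elementary quotients must have $|G|\mid |M|^2$, hence $|G/M|\le n$, while an elementary crossed product attains the extreme $|G/N|=n$, so any surjection $G/M\twoheadrightarrow G/N$ arising from the quotient relation must be an isomorphism. You simply spell out the squeeze and the connectivity bookkeeping that the paper leaves implicit.
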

\subsection{Lagrangians}\label{Lagrangians}
We borrow the bilinear form notation to the group-theoretic forms determined by 2-cocycles as in \cite{david2013isotropy}.
\begin{definition}\label{lagrangiandef} \cite{david2013isotropy}
    A subgroup $N< G$ is a \textit{Lagrangian} with respect to the non-degenerate cohomology class $[\alpha]\in H^2(G,\C^*)$ if
    \begin{enumerate}
        \item $N$ is isotropic with respect to $[\alpha]$, that is res$^G_N([\alpha])=[1]$, and
        \item $|N|=|G/N|=\sqrt{|G|}$.
    \end{enumerate}
    Alternatively, by, \cite{Schur1904} (see also \cite[Lemma 1.2(i)]{Higgs88}), condition (1) in Definition \ref{lagrangiandef} can be replaced by

    $(1')$ There is a 1-dimensional irreducible $\C^{\alpha}N$-module.
\end{definition}
If $[\alpha]\in H^2(A,\C^*)$ is a non-degenerate class of an abelian group $A$, then with the notation of Theorem \ref{CTab} it is not hard to verify that both $A_1$ and $\phi(A_1)$
are Lagrangians with respect to $[\alpha]$. The converse, however, does not always hold. If $A$ is not elementary abelian then $[\alpha]$ admits a Lagrangian $L<A$ without any complement subgroup,
see \S\ref{Hgradings}.

The following result fits \cite[Main Theorem]{h88} to nilpotent groups.
\begin{lemma}\label{isotropicabel}
    Let $[\alpha]\in H^2(N,\C^*)$, where $N$ is a finite nilpotent group. Then there exists an $\alpha$-isotropic subgroup $H<N$ with
    \begin{equation}\label{minind}
    [N:H]=\min\left\{\dim_{\C}(M)\mid[M]\in\Irr(\C ^{\alpha}N)\right\}.
    \end{equation}
    In particular, if $[\alpha]$ is non-degenerate, then it admits a Lagrangian \cite[Prop. 1.7]{david2013isotropy}.
\end{lemma}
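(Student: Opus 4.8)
The plan is to reduce the claim to the case of $p$-groups, where \cite[Main Theorem]{h88} applies directly, and then to glue the resulting isotropic subgroups by means of the coprime direct decomposition of a nilpotent group.

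First I would write $N=\prod_{p}N_p$ as the internal direct product of its Sylow subgroups. For distinct primes $p\neq q$ the abelianizations of $N_p$ and $N_q$ have coprime orders, so their tensor product over $\mathbb{Z}$ vanishes; hence the classical (Schur) decomposition of the Schur multiplier $H^2(-,\C^*)$ of a direct product degenerates, giving an isomorphism $H^2(N,\C^*)\cong\prod_{p}H^2(N_p,\C^*)$ that is compatible with restriction to subgroups of the block form $\prod_{p}H_p$. Concretely this says that $[\alpha]$ is the product of the inflations of its restrictions $[\alpha_p]:=[\mathrm{res}^N_{N_p}\alpha]$, that $\C^{\alpha}N\cong\bigotimes_{p}\C^{\alpha_p}N_p$ as $\C$-algebras, and therefore that $\Irr(\C^{\alpha}N)$ is identified with $\prod_p\Irr(\C^{\alpha_p}N_p)$ in a way that multiplies dimensions. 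In particular
\[
\min\{\dim_{\C}(M)\mid [M]\in\Irr(\C^{\alpha}N)\}=\prod_{p}m_p,\qquad m_p:=\min\{\dim_{\C}(M_p)\mid [M_p]\in\Irr(\C^{\alpha_p}N_p)\}.
\]

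Next, for each prime $p$ the group $N_p$ is a $p$-group, so \cite[Main Theorem]{h88} supplies an $\alpha_p$-isotropic subgroup $H_p<N_p$ with $[N_p:H_p]=m_p$. I would then set $H:=\prod_{p}H_p<N$. By the restriction-compatibility recorded above, $\mathrm{res}^N_H[\alpha]$ corresponds under the block decomposition to the tuple $([\mathrm{res}^{N_p}_{H_p}\alpha_p])_p=([1])_p$, so $H$ is $\alpha$-isotropic; and $[N:H]=\prod_p[N_p:H_p]=\prod_p m_p$ is precisely the minimum appearing in \eqref{minind}. For the last assertion, if $[\alpha]$ is non-degenerate then $\Irr(\C^{\alpha}N)$ is a singleton whose unique member $[M]$ satisfies $\dim_{\C}(M)=\sqrt{|N|}$; the subgroup $H$ just produced is then $\alpha$-isotropic with $[N:H]=\sqrt{|N|}$, i.e. $|H|=|N/H|=\sqrt{|N|}$, which is exactly a Lagrangian in the sense of Definition \ref{lagrangiandef}.

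The only genuinely delicate point is the first step: checking that the splitting $H^2(N,\C^*)\cong\prod_pH^2(N_p,\C^*)$ is natural enough to commute with restriction to the block-diagonal subgroups $\prod_pH_p$, so that isotropy of each $H_p$ for $\alpha_p$ forces isotropy of $H$ for $\alpha$. Everything past that is either the cited $p$-group theorem or the elementary multiplicativity of irreducible degrees under a tensor product of twisted group algebras.
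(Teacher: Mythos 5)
Your proposal is correct and follows the same overall strategy as the paper: decompose $N=\prod_p N_p$ into Sylow subgroups, invoke Higgs' theorem to produce an $\alpha$-isotropic $H_p<N_p$ of the right ($p$-power) index, and then take $H=\prod_p H_p$. The one genuine difference lies in how the ``glue'' step is justified: the paper quotes Higgs for the equality $\prod_p[N_p:H_p]=\gcd\{\dim_{\C}M\}$ over $\Irr(\C^\alpha N)$ and then cites Karpilovsky's text for the fact that for a nilpotent group this g.c.d.\ coincides with the minimum degree, whereas you bypass the g.c.d.\ entirely by making the Künneth/Schur decomposition $H^2(N,\C^*)\cong\prod_p H^2(N_p,\C^*)$ and the resulting tensor factorization $\C^\alpha N\cong\bigotimes_p\C^{\alpha_p}N_p$ explicit, which directly yields multiplicativity of minimal degrees and of isotropy. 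Your version is a bit more self-contained (it replaces an external reference for g.c.d.\,$=$\,min with an elementary structural observation), at the cost of having to verify compatibility of the Künneth splitting with restriction to block subgroups, which you correctly flag and which is indeed the same routine coprime-order argument the paper leaves implicit in its sentence ``since the subgroups $H_p$ are $\alpha$-isotropic then so is their direct product $H$.'' Both routes are valid; the paper's is shorter on the page, yours surfaces the mechanism more transparently.
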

\begin{proof}
    Since $N$ is nilpotent, it is a direct product $N=\prod_pN_p$ of its $p$-Sylow subgroups.
For every prime $p$, let $H_p<N_p$ be $\alpha$-isotropic of minimal index in the $p$-Sylow subgroup $N_p$.
    The direct product $H:=\prod_pH_p$ is a subgroup of $N$ of index
    \begin{equation}\label{indices}
    [N:H]=\prod_p[N_p:H_p].
    \end{equation}
    Moreover, since the subgroups $H_p$ are $\alpha$-isotropic then so is their direct product $H$.
By \cite[Main Theorem]{h88} the r.h.s. of \eqref{indices} is equal to the greatest common divisor of the dimensions of the irreducible $\C^{\alpha}N$-modules.
For a finite nilpotent group this g.c.d. is the minimum of such dimensions (see \cite[\S 3.7]{karpilovsky}), and hence the $\alpha$-isotropic  subgroup $H<N$ satisfies \eqref{minind}.

The second part of the claim follows from the fact that
    if $[\alpha]$ is non-degenerate, then the unique element $[M]\in\Irr(\C ^{\alpha}N)$ satisfies
    $\dim_{\C}(M)=\sqrt{|N|}$. Therefore, the $\alpha$-isotropic subgroup $H<N$ as above is of index $\sqrt{|N|}$ and so is a Lagrangian.
\end{proof}
Even though non-degenerate cohomology classes over nilpotent groups of CT admit Lagrangians, one cannot expect that these are normal \cite[\S 1]{david2013isotropy}. From Lemma \ref{isotropicabel} we establish
\begin{corollary}\label{lagrangian}
    Let $[\alpha] \in H^2(G,\C^*)$ be non-degenerate and let $N\lhd G$ be a normal nilpotent subgroup
    such that the quotient $G/N$-grading class $[\bigslant{\C^{\alpha}G}{N}]$ is elementary. Then $N$ contains a Lagrangian $H<G$ with respect to $[\alpha]$.
\end{corollary}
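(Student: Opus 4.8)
The plan is to combine the structure of elementary quotient gradings (Lemma \ref{elemetaryquotient}) with the nilpotent Lagrangian existence result (Lemma \ref{isotropicabel}). First I would record what the hypothesis "$[\bigslant{\C^{\alpha}G}{N}]$ is elementary" buys us: by Lemma \ref{elemetaryquotient}, the group $G/N$ acts freely (and, by Lemma \ref{abelag}, transitively) on $\Irr(\C^{\alpha}N)$, so in particular $|\Irr(\C^{\alpha}N)|=|G/N|$, and all the irreducible $\C^{\alpha}N$-modules share a common dimension $\dim_{\C}(M)$ given by \eqref{GNdivN}, namely $\dim_{\C}(M)^2=|N|^2/|G|$, equivalently $\dim_{\C}(M)=|N|/\sqrt{|G|}=|N|/\bigl(|N|\cdot|G/N|\bigr)^{1/2}=\sqrt{|N|/|G/N|}$.

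Next I would apply Lemma \ref{isotropicabel} to the restricted class $[\alpha]\in H^2(N,\C^*)$: since $N$ is nilpotent, there is an $\alpha$-isotropic subgroup $H<N$ with $[N:H]=\min\{\dim_{\C}(M')\mid [M']\in\Irr(\C^{\alpha}N)\}$. By the previous paragraph the minimum is attained by \emph{every} irreducible $\C^{\alpha}N$-module, so $[N:H]=\dim_{\C}(M)=\sqrt{|N|/|G/N|}$, whence $|H|=|N|/[N:H]=|N|\cdot\sqrt{|G/N|/|N|}=\sqrt{|N|\cdot|G/N|}=\sqrt{|G|}$. Thus $H$ is an $\alpha$-isotropic subgroup of $G$ (contained in $N$) with $|H|^2=|G|$, which is exactly the definition of a Lagrangian with respect to the non-degenerate class $[\alpha]$.

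The only genuine point requiring care is the index computation: one must be sure that under the elementarity hypothesis the dimensions of the irreducible $\C^{\alpha}N$-modules are \emph{all equal} — so that the minimal dimension appearing in \eqref{minind} coincides with the common value in \eqref{GNdivN} — and this is precisely what Lemmas \ref{abelag} and \ref{elemetaryquotient} provide (transitivity of the $G/N$-action forces equality of dimensions via Lemma \ref{samedim}). Everything else is bookkeeping with the orders $|G|=|N|\cdot|G/N|$. I expect no serious obstacle; the subtlety is merely making sure the two dimension formulas are applied to the same quantity. Note that the resulting Lagrangian $H$ need not be normal in $G$, consistently with the remark preceding the corollary.
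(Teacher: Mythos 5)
Your proof is correct and follows essentially the same route as the paper: invoke Lemma \ref{elemetaryquotient} (together with Lemma \ref{abelag}) to see that all irreducible $\C^{\alpha}N$-modules share the common dimension $|N|/\sqrt{|G|}$, apply Lemma \ref{isotropicabel} to the nilpotent group $N$ to produce an $\alpha$-isotropic $H<N$ of index equal to that common (hence minimal) dimension, and conclude $|H|=\sqrt{|G|}$ so that $H$ is a Lagrangian. The only cosmetic difference is that you rewrite $|N|/\sqrt{|G|}$ as $\sqrt{|N|/|G/N|}$; the paper keeps the first form, but the argument is identical.
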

\begin{proof}
Since $[\bigslant{\C^{\alpha}G}{N}]$ is elementary, then by Lemma \ref{elemetaryquotient} all the irreducible $\C^{\alpha}N$-modules are of the same dimension,
namely $\frac{|N|}{\sqrt{|G|}}$ (see \eqref{GNdivN}).
In particular,
    \begin{equation}\label{sizeq}
    \min\{\dim_{\C}(M)\mid[M]\in\Irr(\C ^{\alpha}N)\}=\frac{|N|}{\sqrt{|G|}}.
    \end{equation}
    By Lemma \ref{isotropicabel}, there exists an $\alpha$-isotropic subgroup $H<N$ whose index is equal to the l.h.s. of \eqref{sizeq}, thus $$[N:H]=\frac{|N|}{\sqrt{|G|}}.$$
    We deduce that $|H|=\sqrt{|G|}$ and hence $H$ is a Lagrangian.
\end{proof}
\subsection{Proof of Theorem C}\label{Proof of Theorem C}
Suppose first that the normal subgroup $N\lhd G$ is a Lagrangian with respect to the non-degenerate cohomology class $[\alpha]\in H^2(G,\C^*)$.
In particular, by Condition (2) in Definition \ref{lagrangiandef}, equation \eqref{NGN} holds, that is $|N|=|G/N|$.
Lemma \ref{ECPquot} tells us then that in order to prove that $[\bigslant{\C^{\alpha}G}{N}]$ is an elementary crossed product class,
it is enough to show that the $G/N$-action \eqref{act} on $\Irr(\C ^{\alpha}N)$
is free. Indeed, owing to Lemma \ref{abelag}, all the irreducible $\C^{\alpha}N$-modules are 1-dimensional.
By \eqref{eq:GNdivN} and \eqref{NGN} we deduce that the corresponding stabilizers
$\mathcal{I}_{\mathcal{G}}(M)$ are trivial since
\begin{equation}\label{eqeq}
|\mathcal{I}_{\mathcal{G}}(M)|=\frac{|G|}{|N|^2}=1,\ \ \forall [M]\in\Irr(\C ^{\alpha}N).
\end{equation}
Consequently, $G/N$ acts freely on $\Irr(\C ^{\alpha}N)$ via \eqref{act}, proving the ``if" direction.

Conversely, suppose that $[\bigslant{\C^{\alpha}G}{N}]$ is an elementary crossed product grading class.
Then by Lemma \ref{ECPquot}, $|N|=|G/N|=\sqrt{|G|}$, and all the irreducible $\C^{\alpha}N$-modules are 1-dimensional.
Then $N\lhd G$ fulfills the conditions $(1')$ and (2) of Definition \ref{lagrangiandef} for being a Lagrangian with respect to $[\alpha],$ settling the ``only if" direction of the theorem.
\qed

\subsection{Proof of Theorem F}\label{Proof of Theorem F}
Assume that $H$ is IYB. Then, by definition it affords a bijective 1-cocycle $\pi\in Z^1(H,A)$, where $A$ is an $H$-module. In particular, $|H|=|A|$.
Endow $\check{A}:=$Hom$(A,\C^*)$ with the corresponding diagonal $H$-action as follows. For every $h\in H$ and $\chi\in\check{A}$, the character $h(\chi)\in\check{A}$ is defined using the pairing $\langle\cdot,\cdot\rangle$ between $\check{A}$ and $A$:
$$\langle h(\chi),a\rangle:=\langle\chi,h^{-1}(a)\rangle,\ \ \forall a\in A.$$
Then one obtains a semidirect product $G:=\check{A}\rtimes H$ satisfying
\begin{equation}\label{GAH}
|G|=|\check{A}|\cdot |H|=|\check{A}|^2.
\end{equation}
As shown in \cite[\S 8]{eg3}, there exists a non-degenerate class $[\alpha]\in H^2(G,\C^*)$, such that
\begin{equation}\label{resGH}
\text{res}^G_{\check{A}}([\alpha])=[1].
\end{equation}
Equations \eqref{resGH} and \eqref{GAH} are just conditions (1) and (2) in Definition \ref{lagrangiandef} respectively (with $N:=\check{A}$),
suggesting that $\check{A}$ is a Lagrangian with respect to $[\alpha]$.
Theorem C, just proven, fits into this setup saying that $[\bigslant{\C^{\alpha}G}{\check{A}}]$ is an elementary crossed product grading class of $G/\check{A}\cong H$, completing the ``only if" direction.

For the ``if" direction,
suppose that the elementary crossed product grading class corresponding to $H$ is a quotient of a TGA grading, that is
$$\left[\sum _{h\in H}h(\mathbb{C})\right]=\left[\bigslant{\C^{\alpha}G}{N}\right]$$
for some non-degenerate $[\alpha]\in H^2(G,\C^*)$, where $H \cong G/N$. Theorem C can then be used here again, now in the opposite direction, saying that $N$ is a
Lagrangian with respect to $[\alpha]$, in particular $N$ is abelian by Lemma \ref{abelag} and $|N|=|H|$.
By \cite[Theorem 4.6]{bg} there exists a bijective 1-cocycle $\pi_{\alpha}\in Z^1(H,\check{N})$, where $\check{N}:=$Hom$(N,\C^*)$.
By the definition $H$ is IYB.
\qed

\subsection{Proof of Theorem D}\label{Hgradings}
First, the ``if" part of (1) is Corollary~\ref{cor:CPmax} (even without the assumption that $A$ is abelian).

Assume now that for $N\lhd A$, the quotient $A/N$-grading class $[\bigslant{\C^{\alpha}A}{N}]$ is elementary.
Then, since $N$ is abelian, and in particular normal nilpotent subgroup of A, Corollary~\ref{lagrangian} assures that $N$ contains a Lagrangian $L$ with respect to $[\alpha]$.
Since $A$ is abelian, $L$ is normal in $A$, so the class $[\C^{\alpha}A]$ admits an $A/L$-quotient. Since $L<N$ we obtain
\begin{equation}\label{modNL}
\left[\bigslant{\C^{\alpha}A}{N}\right]\leq\left[\bigslant{\C^{\alpha}A}{L}\right].
\end{equation}
By Theorem C, the r.h.s. of \eqref{modNL} is an elementary crossed product class as a quotient of $[{\C^{\alpha}A}]$ by this Lagrangian $L$ with respect to $[\alpha]$.
Consequently, if the l.h.s. of \eqref{modNL} is maximal among the elementary quotients of $[{\C^{\alpha}A}]$ then $L=N$, i.e. $[\bigslant{\C^{\alpha}A}{N}]$ is itself an elementary crossed product.
The ``only if" direction of (1) is proven.

We now prove (2).
By the first part of the theorem, any maximal elementary quotient $A/N$-grading class of $[{\C^{\alpha}A}]$ is an elementary crossed product,
and so Theorem C asserts that $N$ is a Lagrangian with respect to
$[\alpha]$. In particular, $|N|=n$. Suppose that $A$ is elementary abelian of order $n^2$.
Certainly, $A$ is a direct product $C_q^{2r}$ of $2r$-copies of a cyclic group of square-free order $q$, where $q^r=n$.
In this case both the Lagrangian $N<G$ and the corresponding quotient group $G/N$ are isomorphic to $C_q^{r}$,
and hence the quotient class $[\bigslant{\C^{\alpha}A}{N}]=\left[\sum_{\gamma\in C_q^{r}}\gamma(\C)\right]$ is uniquely determined. This settles the ``if" direction of (2).

We prove the ``only if" part of (2) by negation. Assume that the abelian group $A$ of CT is not elementary abelian. For simplicity, start from the rank=2 case, that is
$A=\langle x \rangle \times \langle y\rangle\cong C_{n}\times C_{n}$. By the negation assumption, the exponent $n$ is not square-free and hence can be presented as $n=l^2\cdot m$
for $l,m\in\N$ with $l\geq 2$.
Let $\{u_{\sigma}\}_{\sigma\in A}$ be a basis of invertible homogeneous elements of $\C^{\alpha}A$. By Theorem \ref{CTab} and equation \eqref{commust}
$$\left[u_{x^{l}\cdot y^{l'}},u_{x^{m}\cdot y^{m'}}\right]=\chi_{\alpha}\left(x^{l}\cdot y^{l'},x^{m}\cdot y^{m'}\right)=\zeta^{l\cdot m'-l'\cdot m},$$
where $\zeta$ is a root of unity of order $n$.
By a simple calculation $[u_{x^j},u_{y^k}]=\zeta^{jk}$ for every $j,k$. Therefore, the subgroups
$$L_1:=\langle x\rangle\cong C_n,\text{ and }L_2:=\left\langle x^{l\cdot m},y^{l}\right\rangle\cong C_l\times C_{l\cdot m}  $$
are Lagrangians with respect to $[\alpha]$ with
$$A/L_1:\cong C_n,\text{ and }A/L_2\cong C_l\times C_{l\cdot m}.  $$
Since $l$ is at least 2, the quotient group $C_l\times C_{l\cdot m}  $ in the right isomorphism is not cyclic, and therefore $A/L_1$ and $A/L_2$
are not isomorphic. In turn, the corresponding quotient grading classes $[\bigslant{\C^{\alpha}A}{L_1}]$ and  $[\bigslant{\C^{\alpha}A}{L_2}],$
which are maximal elementary by Theorems C and D(1), are distinct. This contradicts the uniqueness assumption and so proves (2) for abelian CT groups of rank=2.

Back to the more general case, let $A=A_1\times \phi(A_1)$ be any decomposition corresponding to Theorem \ref{CTab}.
Again by the assumption $A$ is not elementary abelian, and so the order of at least one of the cyclic subgroups $\langle x_j\rangle<A_1$ (see \eqref{A1dec}) is not square-free.
Letting $B:=\{1,\cdots,r\}\setminus \{ j\}$, then by Corollary \ref{CorCTab}, the restriction of $[\alpha]$ to both subgroups
$$H:=\langle x_j\rangle\times \phi (\langle  x_j\rangle)<A,\text{  and }\tilde{H}:=\langle\{ x_i\}_{i\in B}\rangle\times \phi \left(\langle\{ x_i\}_{i\in B}\rangle\right)<A$$
is also non-degenerate. Note that $H\times \tilde{H}=A$.
Next, let $\tilde{L}<\tilde{H}$ be a Lagrangian with respect to the non-degenerate restriction of $[\alpha]$ to $\tilde{H}$.
For the abelian CT group $H$ of rank=2 we have just proven that
there exist Lagrangian subgroups $L_1,L_2<H$ with respect to the restriction of $[\alpha]$ to $H$,
such that their corresponding quotient groups $H/L_1$ and $H/L_2$ are non-isomorphic.
As can easily be checked, both subgroups
$$L_1\times \tilde{L}<H\times \tilde{H},\text{ and } \ L_2\times \tilde{L}<H\times \tilde{H}=A$$ are Lagrangians with respect to $[\alpha]$.
The corresponding quotient groups
$$A/(L_1\times \tilde{L})\cong (H/L_1)\times(\tilde{H}/\tilde{L}),\text{ and }A/(L_2\times \tilde{L})\cong (H/L_2)\times(\tilde{H}/\tilde{L})$$ are not isomorphic.
In turn, the corresponding quotient grading classes $$\left[\bigslant{\C^{\alpha}A}{L_1\times \tilde{L}}\right], \text{ and }\left[\bigslant{\C^{\alpha}A}{L_2\times \tilde{L}}\right],$$
which are maximal elementary by Theorems C and D(1), are distinct.
This again contradicts the uniqueness assumption and completes the proof of Theorem D(2).
 \qed

\section{Proof of Theorem G}\label{proofofF}
As already mentioned in the introduction, quotient morphisms
determine a well-defined relation on the set of graded-equivalence
classes of a given algebra. If
an algebra is finite-dimensional, then this relation, restricted
to classes of its connected gradings, is a partial order
\cite[Proposition 2.8]{GS16}.
\begin{definition}\label{scdiagdef}
	Let $\mathcal{A}$ be a finite-dimensional $\mathbb{C}$-algebra.
	\begin{enumerate}
		\item The diagram of isomorphism types of groups that are
		associated to the connected grading classes of the algebra $\mathcal{A}$ with the
		corresponding quotient morphisms is denoted by
		$\Delta(\mathcal{A})$. \item \cite[Proposition
		2.10]{cibils2011},\cite[Definition 8 and \S 4
		]{cibils2012universal} The intrinsic fundamental group
		$\pi_1(\mathcal{A})$ of $\mathcal{A}$ is the inverse limit of the
		diagram $\Delta(\mathcal{A})$.
	\end{enumerate}
\end{definition}
In this section we compute the intrinsic fundamental group of the diagonal algebras $$\C ^j=\oplus_1^j \C, \ \ j=4,5.$$
The $j=4$ case is done in \S\ref{H(1)} and the $j=5$ case is done in \S\ref{H(2)}.

We first recall the structure of the maximal graded classes of diagonal algebras.
This was firstly done in \cite{das2008}, here we give the formulation of \cite[Theorem 4.12]{GS16}. The reader is also referred to the discussion prior to that theorem.
In order to formulate this result, we need the following.\\
1. Note that the diagonal algebra $\C ^n$ is equipped with the group algebra connected grading $\C G$ for every abelian group $G$ of order $n$. Certainly, all these gradings are maximal.\\
2. The second ingredient is the free product grading hereby defined.
\begin{definition}
    Let $\mathcal{A}=\bigoplus_{j=1}^r\mathcal{A}_j$ be a direct sum of algebras, and let
    \begin{equation}\label{algrading}
    \mathcal{G}_{j}:\mathcal{A}_j=\bigoplus _{\gamma\in \Gamma_j}(\mathcal{A}_j)_{\gamma}, \ \ j=1,\cdots,r
    \end{equation}
    be gradings of the algebras $\mathcal{A}_j$ by some groups $\Gamma_j$ with trivial elements $e_j$.
    Then the {\it free product grading} of \eqref{algrading} is the grading
    $$\coprod_{j=1}^r \mathcal{G}_j:\mathcal{A}=\bigoplus_{\gamma\in\Gamma}\mathcal{A}_{\gamma}$$
    of $\mathcal{A}$ by the free product of groups $\Gamma=\coprod_{j=1}^r \Gamma_j,$ where
    \begin{equation}\label{coprodgrade}
        \mathcal{A}_{\gamma}=\left\{
    \begin{array}{rl}
    \bigoplus_{j=1}^r(\mathcal{A}_j)_{e_j},& \text{ if } \gamma=e\\
    (\mathcal{A}_j)_{\gamma},&  \text{ if }\gamma\in \Gamma_j\setminus {e_j},\\
    0,& \text{ otherwise}.
    \end{array}\right.
    \end{equation}
\end{definition}
Then, if the $\Gamma_j$-gradings $\mathcal{G}_j$ of $\mathcal{A}_j$ are connected for all $j=1,\cdots,r$,
then so is the $\coprod_{j=1}^r \Gamma_j$-grading $\coprod_{j=1}^r \mathcal{G}_j$ of $\mathcal{A}$ (see \cite[Lemma 6.4]{cibils2010}).
\begin{theorem}\label{da}(\cite[Theorem 5]{das2008}, see also \cite[Theorem 4.12]{GS16})
    The maximal connected grading classes of the diagonal algebra $\C^n$ are free product classes $\left[\coprod_{j=1}^r\C G_j\right]$ of ordinary group algebras,
    where $G_j$ are abelian groups with $\sum_{j=1}^r|G_j|=n,$ such that no more than one group $G_j$ is trivial.
\end{theorem}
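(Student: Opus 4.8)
The plan is to establish the two inclusions making up the statement: that every maximal connected grading class of $\C^n$ is a free product class of ordinary group algebras of abelian groups with $\sum_j|G_j|=n$, and conversely that such a class is maximal precisely when at most one of the $G_j$ is trivial. (The statement is \cite[Theorem 5]{das2008}, reformulated as \cite[Theorem 4.12]{GS16}; here I sketch the argument.) First I would take an arbitrary connected $\Gamma$-grading class $[\mathcal{H}]$ of $\C^n$, decompose it into graded-simple summands by Theorem~\ref{AW}, and apply Theorem~\ref{BSZ} to each summand to write it as $[x(\C^{\alpha}G)]$. Commutativity of $\C^n$ forces each summand to be commutative, hence $\epsilon(x)=1$ (otherwise a matrix block would occur) and $\chi_{\alpha}\equiv 1$; by \eqref{commust} and the description of $H^2$ of an abelian group recalled in \S\ref{abCT} this means $G$ is abelian and $[\alpha]=[1]$, so each summand is graded-equivalent to an ordinary group algebra $\C H_k$ of a finite abelian group, realized on a subgroup $H_k<\Gamma$ with one-dimensional homogeneous components indexed by $H_k$. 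Writing $\mathcal{H}=\bigoplus_k\C H_k$ (so $\sum_k|H_k|=n$) and $\Gamma=\langle\bigcup_kH_k\rangle$ by connectedness, the universal property of the free product yields a surjection $\coprod_kH_k\twoheadrightarrow\Gamma$ restricting to the inclusions, and together with $\mathrm{id}_{\C^n}$ this is a quotient morphism $\coprod_k\C H_k\to\mathcal{H}$. Hence $[\mathcal{H}]\le[\coprod_k\C H_k]$, and the latter is a connected free product class by \cite[Lemma 6.4]{cibils2010}. Consequently every maximal connected class of $\C^n$ is such a free product class $[\coprod_{j=1}^r\C G_j]$ with the $G_j$ abelian and $\sum_j|G_j|=n$.

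\textbf{The easy direction of the trivial-factor condition.} If two of the $G_j$ were trivial, say $G_1=G_2=\{e\}$, I would merge the two trivially graded copies of $\C$ into one copy of $\C^2\cong\C C_2$ graded by $C_2$; the homomorphism killing the free factor $C_2$ induces a quotient morphism $\C C_2\coprod_{j\ge3}\C G_j\to\coprod_{j=1}^r\C G_j$, so $[\coprod_{j=1}^r\C G_j]\le[\C C_2\coprod_{j\ge3}\C G_j]$, and this is strict because the two grading groups are non-isomorphic (their abelianizations have different order). Thus a maximal free product class has at most one trivial factor.

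\textbf{The hard direction.} Conversely, given a quotient morphism $(\psi,\phi):\mathcal{H}\to\coprod_{j=1}^r\C G_j$ with $[\mathcal{H}]$ connected and at most one $G_j$ trivial, I would show $\phi$ is an isomorphism. Decomposing $\mathcal{H}=\bigoplus_k\C H_k$ as above, the graded algebra isomorphism $\psi$ carries each two-sided graded ideal $\C H_k$ to a graded ideal of $\coprod_j\C G_j$, and since the images are pairwise independent and their sum exhausts $\coprod_j\C G_j$, they form a decomposition of $\coprod_j\C G_j$ into graded ideals; as the $\C G_j$ are its minimal graded ideals, this produces a partition $\{S_k\}$ of $\{1,\dots,r\}$ with $\psi(\C H_k)=\bigoplus_{j\in S_k}\C G_j$. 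The crucial observation is that $\phi(H_k)$ is a subgroup of the free product $\coprod_jG_j$ contained, as a subset, in the union $\bigcup_{j\in S_k}G_j$ of free factors, hence is trivial or lies in a single factor; feeding this into the degree constraint $\psi((\C H_k)_h)\subseteq(\coprod_j\C G_j)_{\phi(h)}$ forces every block $S_k$ to contain at most one non-trivial index. Under the hypothesis, a block $S_k$ of size $\ge 2$ would consist of exactly one non-trivial index $j_1$ (with $|G_{j_1}|\ge2$) together with the unique trivial index, so $|H_k|=|G_{j_1}|+1$; but then the non-trivial subgroup $\phi(H_k)\le G_{j_1}$, being a quotient of $H_k$, has order dividing $\gcd(|G_{j_1}|+1,|G_{j_1}|)=1$ — impossible. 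Hence all $S_k$ are singletons, $\psi(\C H_k)=\C G_{j(k)}$ with $k\mapsto j(k)$ a bijection, and matching the one-dimensional homogeneous components on each side forces $\phi$ to restrict to an isomorphism $H_k\xrightarrow{\ \sim\ }G_{j(k)}$. Composing the canonical surjection $\coprod_kH_k\twoheadrightarrow\Gamma_\mathcal{H}$ with $\phi$ then gives a homomorphism $\coprod_kH_k\to\coprod_jG_j$ that is an isomorphism on each free factor and permutes the factors, hence is itself an isomorphism; as the surjection $\coprod_kH_k\twoheadrightarrow\Gamma_\mathcal{H}$ must then be injective, it and therefore $\phi$ are isomorphisms, so $[\mathcal{H}]=[\coprod_{j=1}^r\C G_j]$.

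\textbf{The main obstacle.} The weight of the argument is entirely in the hard direction, and specifically in the accounting of trivial summands: the intuition that a graded isomorphism matches up graded-simple summands fails exactly at trivial summands — this is precisely why $[\C C_2]$ strictly dominates $[\C\oplus\C]$ — and the elementary coprimality observation $\gcd(m+1,m)=1$ is what forces the partition $\{S_k\}$ to collapse to singletons once there is at most one trivial factor.
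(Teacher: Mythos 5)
The paper does not prove Theorem~\ref{da}; it is stated as a citation to \cite[Theorem 5]{das2008} and \cite[Theorem 4.12]{GS16} and then used in \S\ref{proofofF}, so there is no internal proof for me to compare your write-up against. What you have produced is a self-contained reconstruction, and as far as I can check it is correct. The skeleton — Theorem~\ref{AW} plus Theorem~\ref{BSZ} to reduce a graded-simple summand of a commutative algebra to $[\gamma_0(\C G_k)]$ with $\epsilon=1$, $G_k$ abelian, $[\alpha]$ trivial; the universal property of the free product to dominate any connected class by $\bigl[\coprod_k \C H_k\bigr]$; the merging of two trivial factors into $\C C_2$ to rule out $\geq 2$ trivial factors; and the free-product support argument together with the $\gcd(m+1,m)=1$ observation in the hard direction — is exactly the expected route and matches the structure of the cited result.

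Two small points worth spelling out, neither fatal. First, Theorem~\ref{BSZ} gives the $k$-th graded-simple summand as $[\gamma_0^{(k)}(\C G_k)]$ whose support is the conjugate $\gamma_0^{(k)} G_k (\gamma_0^{(k)})^{-1}$, not $G_k$ itself; you should state explicitly that you replace $G_k$ by this conjugate so that each $H_k$ really is a subgroup of $\Gamma_{\mathcal H}$ indexing the one-dimensional homogeneous pieces of its summand, which is what the surjection $\coprod_k H_k \twoheadrightarrow \Gamma_{\mathcal H}$ requires. Second, in the hard direction the cleanest way to see that each $S_k$ contains at most one nontrivial index is to note that the support of $\psi(\C H_k)=\bigoplus_{j\in S_k}\C G_j$ equals $\phi(H_k)$, hence $\bigcup_{j\in S_k}G_j=\phi(H_k)$ is a \emph{subgroup} of $\coprod_j G_j$; a union of free factors is a subgroup only if at most one of them is nontrivial. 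This makes the transition to your coprimality step immediate. With those two clarifications your argument is complete.
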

In the proof of Theorem G we also use the following straightforward lemma.
\begin{lemma}\label{lemma:trivcom}
Let $\mathcal{A}$ be a finite-dimensional semi-simple algebra whose maximal grading classes are $\{[\mathcal{G}_j]\}_{1\leq j\leq r}$,
graded by the corresponding groups $\{\Gamma_j\}_{1\leq j\leq r}$.
Suppose that for all $1\leq i\leq r-1$, the only common quotient of $[\mathcal{G}_r]$ with $[\mathcal{G}_i]$ is the trivially graded class of $\mathcal{A}$.
Then $\pi _1(\mathcal{A})$ is a direct product
    of $\Gamma_r$ with the pull-back of the full sub-diagram of $\Gamma_1,\Gamma_2,\ldots ,\Gamma_{r-1}$.
\end{lemma}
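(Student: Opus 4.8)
The plan is to display $\Delta(\mathcal{A})$ as a ``wedge'' of two full subdiagrams joined only at the vertex carrying the trivial group, and then to note that the inverse limit of such a wedge is the direct product of the two inverse limits. Write $[\ast]$ for the trivially graded class of $\mathcal{A}$; as noted just before \eqref{eq:conus} it is the minimum of $\mathrm{Grad}(\mathcal{A})$, hence lies in every cone $[\mathcal{G}_j]_{\geq}$. Since $\mathcal{A}$ is finite-dimensional, every connected grading class lies below a maximal one, so $\mathrm{Grad}(\mathcal{A})=\bigcup_{j=1}^{r}[\mathcal{G}_j]_{\geq}$; let $D_r$ be the full subdiagram of $\Delta(\mathcal{A})$ supported on $[\mathcal{G}_r]_{\geq}$ and $D'$ the full subdiagram supported on $\bigcup_{i=1}^{r-1}[\mathcal{G}_i]_{\geq}$, so that $\Delta(\mathcal{A})=D_r\cup D'$.

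The core step is that $D_r$ and $D'$ are disjoint away from $[\ast]$. If $[\mathcal{K}]\in[\mathcal{G}_r]_{\geq}\cap[\mathcal{G}_i]_{\geq}$ for some $i<r$ then, by transitivity of $\leq$, the class $[\mathcal{K}]$ belongs to the intersection \eqref{intsect} formed from $[\mathcal{G}_r]$ and $[\mathcal{G}_i]$, i.e. is a common quotient of the two, so $[\mathcal{K}]=[\ast]$ by hypothesis; thus $D_r$ and $D'$ share only the object $[\ast]$. Likewise, a quotient morphism of $\Delta(\mathcal{A})$ from a class $[\mathcal{H}]\leq[\mathcal{G}_r]$ to a class $[\mathcal{H}']\leq[\mathcal{G}_i]$ with $i<r$ would force $[\mathcal{H}']\leq[\mathcal{H}]\leq[\mathcal{G}_r]$, making $[\mathcal{H}']$ a common quotient of $[\mathcal{G}_r]$ and $[\mathcal{G}_i]$ and hence equal to $[\ast]$; symmetrically for morphisms in the opposite direction. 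So the only morphisms of $\Delta(\mathcal{A})$ bridging the two subdiagrams are the ones into $[\ast]$, whose target group is $\{e\}$.

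Consequently the compatibility conditions defining $\varprojlim\Delta(\mathcal{A})$ decouple: a compatible family for $\Delta(\mathcal{A})$ is precisely a compatible family for $D_r$ together with one for $D'$, the two values at $[\ast]$ being forced to $e$ and no remaining condition linking the two parts. Hence
\[
\pi_1(\mathcal{A})=\varprojlim\Delta(\mathcal{A})\ \cong\ \varprojlim D_r\ \times\ \varprojlim D'.
\]
By definition $\varprojlim D'$ is the pull-back of the full subdiagram on $\Gamma_1,\dots,\Gamma_{r-1}$, while $\varprojlim D_r\cong\Gamma_r$ because $[\mathcal{G}_r]$ is the maximum of the cone $D_r=[\mathcal{G}_r]_{\geq}$; in the applications $\mathcal{A}=\C^4,\C^5$ of \S\ref{proofofF} this is transparent, since there $[\mathcal{G}_r]$ has no proper nontrivial quotient, so $D_r$ consists of the two objects $[\mathcal{G}_r]$ and $[\ast]$ and $\varprojlim D_r=\Gamma_r$ on the nose. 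Combining the last two facts yields $\pi_1(\mathcal{A})\cong\Gamma_r\times\varprojlim D'$, as asserted. The only place that needs care is the core step — correctly chaining the common-quotient hypothesis with transitivity of the quotient order to see that the two subdiagrams genuinely decouple off $[\ast]$; granting that, the direct-product conclusion is purely formal.
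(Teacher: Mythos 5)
The paper does not prove this lemma — it is invoked as ``straightforward'' — so there is no argument of the authors' to compare against. Your proof supplies the missing reasoning, and the core device — splitting $\Delta(\mathcal{A})$ along the trivially graded class $[\ast]$ into the two full subdiagrams $D_r$ and $D'$ and observing that the inverse limit then decomposes as a product — is correct and is surely what is intended. Both verifications, that the two cones meet only in $[\ast]$ and that any quotient morphism crossing between them must land in $[\ast]$, follow cleanly from the hypothesis $[\mathcal{G}_r]_{\geq}\cap[\mathcal{G}_i]_{\geq}=\{[\ast]\}$ for $i<r$, and after that the product decomposition of the limit is formal since the trivial group at $[\ast]$ imposes no linking constraint.

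The one place that is not airtight, and that you flag yourself, is the identification $\varprojlim D_r\cong\Gamma_r$. Having a maximum in the cone is not by itself sufficient: the limit over $D_r$ equals $\Gamma_r$ only when a compatible family is freely determined by its $\Gamma_r$-coordinate, which requires that each object of $D_r$ receive a single morphism from $\Gamma_r$ in the diagram (two distinct epimorphisms out of $\Gamma_r$ to the same target would force the limit into an equalizer, a proper subgroup of $\Gamma_r$) and that the triangles through $\Gamma_r$ commute. As you note, in the two applications $\Gamma_r$ is $C_3$ or $C_5$, of prime order, so $D_r=\{[\mathcal{G}_r],[\ast]\}$ and the identification is immediate; that suffices for the paper's purposes. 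But as a proof of the lemma as literally stated this step is a gap — one that, in fairness, is already implicit in the lemma's own phrasing, which asserts the first factor is $\Gamma_r$ rather than $\varprojlim[\mathcal{G}_r]_{\geq}$. A clean fix is either to add the hypothesis that $[\mathcal{G}_r]$ has no proper nontrivial connected quotient, or to state the conclusion with $\varprojlim[\mathcal{G}_r]_{\geq}$ in place of $\Gamma_r$.
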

In \cite{cibils2010} the intrinsic fundamental groups of $\C^2,\C^3,\C^4$ are given as follows.
\begin{itemize}
    \item  By Theorem~\ref{da} for $\C ^2$, there is a unique maximal grading class graded by $C_2$ and hence (see \cite[Proposition 6.3]{cibils2010}) $$\pi _1(\C ^2)\cong C_2.$$
    \item Similarly, for $\C ^3$ there are two maximal grading classes by $C_3$ and by $C_2$. The common quotient of these classes consists only of the trivially graded class.
By Lemma \ref{lemma:trivcom} (see \cite[Corollary 6.8]{cibils2010}) $$\pi _1(\C ^3)\cong C_3\times C_2.$$
    \item The four maximal grading classes of $\C ^4$ (see $(1)$-$(4)$ in \S\ref{H(1)} herein) are correctly listed also in \cite[Theorem 6.9]{cibils2010}.
    However, it is mistakenly concluded
    that $\pi_1 (\C ^4)$ is the direct product of the corresponding groups (as in the case of $\C ^2$ and $\C ^3$), not taking into account the common quotients between these maximal classes.
    We wish to recalculate $\pi_1 (\C ^4)$.
\end{itemize}

We note that in the proof of both parts of Theorem G we use $e$ as the identity element of different groups.
\subsection{Proof of Theorem G(1)}\label{H(1)}
    By Theorem~\ref{da}, ${\C ^4}$ admits $4$ maximal graded equivalence classes.
    \begin{enumerate}
        \item The group algebra class $[\C C_3]$, attained by the isomorphism $\C^4\cong\C C_3\oplus \C ,$ and graded by the cyclic group $C_3$.
        \item The group algebra class $[\C C_4]$ graded by the cyclic group $C_4=\langle x  \rangle$.
        \item The group algebra class $[\C (C_2\times C_2)]$ graded by the Klein group $C_2\times C_2={\langle \sigma  \rangle \times \langle \tau  \rangle}$.
        \item The free product class $[\C C_2* \C C_2]$ determined by the isomorphism $\C^4\cong\C C_2\oplus \C C_2$, and graded by the free product $C_2*C_2=\langle a \rangle * \langle b\rangle.$
    \end{enumerate}
    Here is a description of the common quotients between the above gradings. Clearly, the only common quotient of the grading class (1)
    with the other grading classes is the trivially graded class.
    Denoting the pull-back of the full sub-diagram \eqref{subdiag4} of the gradings (2),(3) and (4) by $G_4$, we obtain by Lemma~\ref{lemma:trivcom}
    \begin{equation}\label{dirsum}
    \pi_1({\C ^4})\cong C_3\times G_4.
    \end{equation}
    The other maximal grading classes (2),(3) and (4) admit the following maximal common quotient class graded by the cyclic group $C_2=\langle y \rangle$.
    \begin{itemize}
        \item The grading class determined by the isomorphism $\C^4\cong\C C_2\oplus \C C_2,$  where both copies of the group algebra $\C C_2$ are graded by $\{e,y\}$.
    \end{itemize}
    The corresponding sub-diagram of $\Delta(\C ^4)$ is
    \begin{equation}\label{subdiag4}
    \begin{tikzpicture}
    \tikzset{thick arc/.style={->, black, fill=none,  >=stealth,
            text=black}} \tikzset{node distance=2cm, auto} \node (C_4)
    {$C_4$}; \node (direct) [right of=C_4] {$C_2\times C_2$}; \node
    (free) [right of=direct] {$C_2* C_2$}; \tikzset{node distance=2cm,
        auto} \node (C_2) [below of=direct] {$C_2$}; \tikzset{node
        distance=3cm, auto} \draw[thick arc, draw=blue, dashed] (C_4) to
    node [near start] [left] {$\psi _1$} (C_2); \draw[thick arc,
    draw=blue, dashed] (direct) to node [near start] [left] {$\psi
        _2$} (C_2); \draw[thick arc, draw=blue, dashed] (free) to node
    [near start] [right] {$\psi _3$} (C_2);
    \end{tikzpicture}
    \end{equation}
    where
    \begin{equation}\label{psis}
    \psi _1:x\mapsto y, \quad \psi _2:\sigma,\tau \mapsto y,\quad \psi _3:a,b \mapsto y.
    \end{equation}
    The pull-back $G_4$ of \eqref{subdiag4} is the subgroup of the direct product $(C_4)\times (C_2\times C_2)\times (C_2*C_2)$ of \textit{admissible} triples with respect to \eqref{subdiag4}. More precisely,
    $$G_4=\left\{(g_1,g_2,g_3)\in (C_4)\times (C_2\times C_2)\times (C_2*C_2)\mid\psi_1(g_1)=\psi_2(g_2)=\psi_3(g_3)\right\}.$$
    Notice that $$z_1:=(x,\sigma,a),z_2:=(x,\sigma,b),z_3:=(x^2,\sigma \tau,e)\in G_4.$$
    We claim that $G_4=\langle z_1,z_2,z_3\rangle$.
    Indeed, let $g=(g_1,g_2,g_3)\in G_4$. Write $g_3\in C_2*C_2=\langle a \rangle * \langle b\rangle$ as a reduced word $w(a,b)$ on the letters $a,b$.
    Then the word $w(z_1,z_2)=(\tilde{g_1},\tilde{g_2},g_3)$ for some $\tilde{g_1}\in C_4$ and $\tilde{g_2}\in C_2\times C_2$.
    Therefore, $$g^{-1}\cdot w(z_1,z_2)=\left(g_1^{-1}\cdot\tilde{g_1},g_2^{-1}\cdot\tilde{g_2},e\right)\in G_4.$$
    Since this is an admissible triple and since $\psi _3(e)=e$, then
    $$g_2^{-1}\cdot\tilde{g_2}\in\ker(\psi _2)=\{e,\sigma\tau\}.$$
    \begin{itemize}
 \item   In case $g_2^{-1}\cdot\tilde{g_2}=e$ we have that
    $$g^{-1}\cdot w(z_1,z_2)=\left(g_1^{-1}\cdot\tilde{g_1},e,e\right).$$
  \item  In case $g_2^{-1}\cdot\tilde{g_2}=\sigma \tau$ we have that
    $$g^{-1}\cdot w(z_1,z_2)\cdot z_3=\left(g_1^{-1}\cdot\tilde{g_1}\cdot x^2,e,e\right).$$
    \end{itemize}
    As above, $$ g_1^{-1}\cdot\tilde{g_1}\in\ker(\psi_1)=\{e,x^2\}.$$ In both cases, multiplying by $z_1^2=(x^2,e,e)$ if needed,
    either one of the following holds
    $$g=w(z_1,z_2),\quad g=w(z_1,z_2)\cdot z_3,\quad g=w(z_1,z_2)\cdot z_1^2,\quad g=w(z_1,z_2)\cdot z_3\cdot z_1^2.$$
    This proves that $g\in \langle z_1,z_2,z_3\rangle$ and hence these three elements generate $G_4$.

    Note that $z_3$ is central of order 2, and that $\langle z_1,z_2\rangle \cap \langle z_3\rangle=\{e\}.$
    We obtain
    \begin{equation}\label{G4}
    G_4=\langle z_1,z_2\rangle \times \langle z_3\rangle\cong \langle z_1,z_2\rangle\times C_2.
    \end{equation}
    It is left to describe the group $H_4:=\langle z_1,z_2\rangle$. It is not hard to verify that
    $$\beta_4 :
    \begin{array}{c}H_4\rightarrow C_2*C_2=\langle a,b\rangle\\
    z_1\mapsto a,\ \ \ \ z_2\mapsto b
    \end{array}$$
    is a well-defined surjective homomorphism with ker$(\beta_4)=\langle (x^2,e,e)\rangle$ central of\\ order 2.
    Then $\beta_4$ gives rise to a central extension:
\begin{equation}\label{H4ext}
\quad 1\rightarrow \stackrel{\langle (x^2,e,e)\rangle}{C_2}\rightarrow H_4 \stackrel{\beta_4}{\rightarrow} \stackrel{\langle a\rangle}{C_2}*\stackrel{\langle b\rangle}{C_2}\rightarrow 1
\end{equation}
    determined by $\bar{a}^2=\bar{b}^2=(x^2,e,e)$ for $\bar{a}:=z_1\in \beta_4 ^{-1}(a)$ and $\bar{b}:=z_2\in \beta_4 ^{-1}(b)$.
    Gathering equations \eqref{dirsum}, \eqref{G4} and \eqref{H4ext} concludes the proof.\qed
\subsection{Proof of Theorem G(2)}\label{H(2)}
    By Theorem~\ref{da}, $\C ^5$ admits $5$ maximal graded equivalence classes.
    \begin{enumerate}
        \item The group algebra class $[\C C_5]$ graded by the cyclic group $C_5$.
        \item The group algebra class $[\C C_4]$, attained by the isomorphism $\C^5\cong\C C_4\oplus \C $, and graded by the cyclic group $C_4=\langle x  \rangle$.
        \item The group algebra class $[\C (C_2\times C_2)]$, attained by the isomorphism $\C^5\cong\C (C_2\times C_2)\oplus \C $,
        and graded by the Klein group $C_2\times C_2={\langle \sigma  \rangle \times \langle \tau  \rangle}$.
        \item The free product class $[\C C_2* \C C_2]$ attained by the isomorphism $\C^5\cong\C C_2\oplus \C C_2\oplus\C $, and graded by $C_2*C_2=\langle a \rangle * \langle b\rangle.$
        \item The free product class $[\C C_3* \C C_2]$, attained by the isomorphism $\C^5\cong\C C_3\oplus \C C_2$, and graded by the free product $C_3*C_2=\langle g \rangle * \langle h\rangle.$
    \end{enumerate}
    As above, here are the maximal common quotients between the above maximal grading classes.
    Clearly, the only common quotient of the grading class (1) with the other grading classes is the trivially graded class.
    Denoting the pull-back of the full sub-diagram \eqref{subdiag5} of the gradings (2),(3),(4) and (5) by $G_5$, we obtain by Lemma~\ref{lemma:trivcom}
    \begin{equation}\label{dirsum5}
    \pi_1({\C ^5})\cong C_5\times G_5.
    \end{equation}
    As in the proof of Theorem G(1), the maximal grading classes (2),(3) and (4) admit the following maximal common quotient class.
    \begin{itemize}
        \item The grading class determined by the isomorphism $\C^5\cong\C C_2\oplus \C C_2\oplus \C ,$  where both copies of the group algebra $\C C_2$ are graded by $C_2=\{e,y\}$.
    \end{itemize}
The grading class (5) admits only the trivially graded common quotient class both with the class (2) as well as with the class (3).
It remains to describe the maximal common quotient between the grading classes (4) and (5). This is
    \begin{itemize}
        \item The grading class determined by the isomorphism $\C^5\cong\C C_2\oplus \C^3 ,$  where the group algebra $\C C_2$ is graded by $C_2=\{e,\kappa\}$.
    \end{itemize}
    The corresponding sub-diagram of $\Delta(\C ^5)$ (in which we do not write trivial common quotients) is
    \begin{equation}\label{subdiag5}
    \begin{tikzpicture}
    \tikzset{thick arc/.style={->, black, fill=none,  >=stealth,
            text=black}} \tikzset{node distance=2cm, auto} \node (C_4)
    {$C_4$}; \node (direct) [right of=C_4] {$C_2\times C_2$}; \node
    (free) [right of=direct] {$C_2* C_2$}; \node (free3) [right
    of=free] {$C_3* C_2$}; \tikzset{node distance=2cm, auto} \node
    (C_2) [below of=direct] {$C_2$}; \node (C_22) [below of=free]
    {$C_2$}; \tikzset{node distance=3cm, auto} \draw[thick arc,
    draw=blue, dashed] (C_4) to node [near start] [left] {$\psi _1$}
    (C_2); \draw[thick arc, draw=blue, dashed] (direct) to node [near
    start] [left] {$\psi _2$} (C_2); \draw[thick arc, draw=blue,
    dashed] (free) to node [near start] [right] {$\psi _3$} (C_2);
    \draw[thick arc, draw=red, dashed] (free) to node [near start]
    [right] {$\phi _1$} (C_22); \draw[thick arc, draw=red, dashed]
    (free3) to node [near start] [right] {$\phi _2$} (C_22);
    \end{tikzpicture}
    \end{equation}
    where $\psi _1,\psi _2$ and $\psi _3$ are given in \eqref{psis}, and where
    \begin{eqnarray}\label{phis}
    \phi _1:
    \begin{array}{rcl}
    a &\mapsto      & \kappa \\
    b &\mapsto      &  1
    \end{array}
    ,\ \ \ \phi _2:
    \begin{array}{rcl}
    h &\mapsto      & \kappa \\
    g &\mapsto      &  1
    \end{array}.
    \end{eqnarray}
    Let $$Q_5=\{(g_1,g_2)\in(C_2*C_2)\times (C_3*C_2)\mid \phi_1 (g_1)=\phi_2 (g_2)\}$$
    be the pull-back of the red sub-diagram.
    Notice that $$u_1:=(b,e),u_2:=(e,g),u_3:=(a,h)\in Q_5.$$
    We claim that $Q_5=\langle u_1,u_2,u_3\rangle$.
    Indeed, let $(g_1,g_2)\in Q_5$. Write $g_1\in C_2*C_2$ as a reduced word $w_1(a,b)$ on the letters $a$ and $b$, and $g_2\in C_3*C_2$ as a reduced word $w_2(g,h)$ on the letters $g$ and $h$.
    Since $(g_1,g_2)$ is an admissible pair, we have
    $$\phi_1 (w_1(a,b))=\phi _2(w_2(g,h)).$$
    Denote by $n_1(a)$ and $n_1(b)$ the number of appearances of the letters $a$ and $b$ in $w_1(a,b)$ respectively.
    Similarly, denote by $n_2(h)$ and $n_2(g)$ the number of appearances of the letters $h$ and $g$ in $w_2(g,h)$ respectively.
Consider the word $w_1(u_3,u_1)$. It is not hard to check that
    \begin{equation}\label{w11}
    w_1(u_3,u_1)=\left(g_1,h^{n_1(a)}\right)=\left(g_1,h^{n_1(a)+2m}\right)\in Q_5,
    \end{equation}
for any integer $m$, which can be chosen to be large enough so as to satisfy $n_1(a)+2m\geq n_2(h)$.
    Now, plug $n_2(g)$ many copies of $u_2$ into the word \eqref{w11} in a coherent way from left to right
    to get a new word
    \begin{equation}\label{w31}w_3(u_3,u_1,u_2)=\left(g_1,g_2\cdot h^{n_1(a)+2m-n_2(h)}\right)\in Q_5
    \end{equation}
    (note that the left component is not affected by this step).
    Next, by multiplying the element \eqref{w31} from the left by the inverse of the element $(g_1,g_2)\in Q_5$ we get    $\left(e,h^{n_1(a)+2m-n_2(h)}\right)\in Q_5,$
    and therefore
    \begin{equation}\label{hen1}
    h^{n_1(a)+2m-n_2(h)}=e.
    \end{equation}
    By \eqref{w31} and \eqref{hen1} we establish
    $$(g_1,g_2)=w_3(u_3,u_1,u_2)\in \langle u_1,u_2,u_3\rangle.$$
Consequently, $u_1,u_2,u_3$ generate $Q_5$, verifying the claim.

    Next, with the above notation,
    $$\langle u_1,u_2\rangle=\langle u_1\cdot u_2\rangle=\langle (b,g)\rangle \cong C_6.$$ Also notice that there are no non-trivial relations between $(b,g)$ and $(a,h)$.
    The pull-back $Q_5$ of the red sub-diagram is then well-understood
    \begin{equation}\label{Q5frpr}
    	Q_5=\langle u_1,u_2,u_3\rangle \cong C_6* C_2.
    	\end{equation}
    This enables us to get hold of the pull-back
    $$G_5<(C_4)\times (C_2\times C_2)\times(C_2*C_2)\times (C_3*C_2)$$ of the entire diagram \eqref{subdiag5} as follows.
Firstly, as can easily be verified, the tuples
\begin{equation}\label{genG5}
\{ (x,\sigma ,a,h),(x,\sigma ,b,e),(x^2,\sigma \tau,e,e),(e,e,e,g)\}.
\end{equation}
    are admissible with respect to \eqref{subdiag5} and hence belong to $G_5$. We claim that \eqref{genG5} generate $G_5$. Indeed, let $(g_1,g_2,g_3,g_4)\in G_5$.
In particular, $g_4\in C_3*C_2$ is a word $w'(h,g)$, and the triple $(g_1,g_2,g_3)$ is admissible with respect to the ``blue" sub-diagram, and hence belong to $G_4$.
By \eqref{G4}, $(g_1,g_2,g_3)\in G_4$ is a word $w''(z_1,z_2,z_3)$. Consequently,
\begin{equation}\label{w20}
w''(x,\sigma ,a,h),(x,\sigma ,b,e),\left(x^2,\sigma \tau,e,e)\right)=\left(g_1,g_2,g_3,h^{n}\right)\in G_5
\end{equation}
for some non-negative integer $n$. Since $h$ is of finite order, the integer $n$ may be assumed to be at least the number of appearances of $h$ in the word $w'(h,g)$.
Let $n'(h)$ and $n'(g)$ be the number of appearances of $h$ and $g$ in the word $w'(h,g)$ respectively.
As before, plug $n'(g)$ many copies of $(e,e,e,g)$ into the word \eqref{w20} coherently from left to right
    to get a new word
    \begin{equation}\label{w40}w'''\left((x,\sigma ,a,h),(x,\sigma ,b,e),\left(x^2,\sigma \tau,e,e\right),(e,e,e,g)\right)=\left(g_1,g_2,g_3,g_4\cdot h^{n-n'(h)}\right)\in G_5
    \end{equation}
    (note that the left three components are not affected by this step).
Next, by multiplying the element \eqref{w40} from the left by the inverse of the element $(g_1,g_2,g_3,g_4)\in G_5$ we get
    $\left(e,e,e,h^{n-n'(h)}\right)\in G_5,$
    and therefore
    \begin{equation}\label{nhne}
    h^{n-n'(h)}=e.
    \end{equation}
    By \eqref{w40} and \eqref{nhne} we deduce that $(g_1,g_2,g_3,g_4)\in G_5$ is the word $w'''$ in \eqref{genG5}, and hence
    $$G_5=\langle ((x,\sigma ,a,h),x,\sigma ,b,e),(x^2,\sigma \tau,e,e),(e,e,e,g)\rangle.$$
    In particular,
$$(e,\sigma \tau,e,e)=(x^2,\sigma \tau,e,e)\cdot(x,\sigma ,b,e)^2$$
is central of order $2$ in $G_5$. Denote the subgroup $$H_5:=\langle (x,\sigma ,a,h),(x,\sigma ,b,e),(e,e,e,g)\rangle<G_5.$$
Then $H_5\cap \langle (e,\sigma \tau,e,e)\rangle=\{e\}$, and together with the centrality of $(e,\sigma \tau,e,e)$ in $G_5$ we have
    \begin{equation}\label{G5H5}
    G_5=H_5\times\langle (e,\sigma \tau,e,e)\rangle\cong H_5\times C_2.
    \end{equation}

    It is left to describe the group $H_5$.
As can easily be verified, the rule
    $$\beta_5 :
    \begin{array}{c}
    H_5\rightarrow Q_6\cong \stackrel{\langle u_1\cdot u_2\rangle}{C_6}*\stackrel{\langle u_3\rangle}{C_2}\\
    (x,\sigma ,b,e)\mapsto u_1,\quad (x,\sigma ,a,h)\mapsto u_3,\quad (e,e,e,g)\mapsto u_2.
    \end{array}$$
(see \eqref{Q5frpr}) determines a well-defined surjective homomorphism with
    $$\ker(\beta_5)=\langle (x^2,e,e,e)\rangle =\langle (x,\sigma ,b,e)^2\rangle =\langle (x,\sigma ,a,h)^2\rangle $$ central of order 2. Consequently, $\beta_5$ gives rise to the central extension
    \begin{equation}\label{H5ext}
\quad 1\rightarrow \stackrel{\langle (x^2,e,e,e)\rangle}{C_2}\rightarrow H_5\stackrel{\beta_5}{\rightarrow} \stackrel{\langle u_1\cdot u_2\rangle}{C_6}*\stackrel{\langle u_3\rangle}{C_2}\rightarrow 1
    \end{equation}
    determined by $$\overline{z}^6=\overline{w}^2=(x^2,e,e,e)$$ for $\overline{z}:=(x,\sigma ,b,g)\in \beta_5 ^{-1}(u_1\cdot u_2)$ and $\overline{w}:=(x,\sigma ,a,h)\in \beta_5 ^{-1}(u_3)$.
    The extension \eqref{H5ext}, together with equations \eqref{dirsum5} and \eqref{G5H5}, complete the proof.\qed

\vspace*{0.3cm}
\noindent\textbf{Acknowledgement.} We thank the referee for the important comments, which significantly improve the paper.

\end{document}